\setlist[enumerate]{label=\alph*)}
\newtheorem{z}{asdfasdf}[section]
\newtheorem{Definition}[z]{Definition}
\newtheorem{definition}[z]{Definition}
\newtheorem{Lemma}[z]{Lemma}
\newtheorem{Corollary}[z]{Corollary}
\newtheorem{Remark}[z]{Remark}
\newtheorem{remark}[z]{Remark}
\newtheorem{Proposition}[z]{Proposition}
\newtheorem{Theorem}[z]{Theorem}
\renewcommand{\O}{\mathcal{O}}
\DeclareMathOperator{\N}{\mathbb{N}}
\DeclareMathOperator{\Z}{\mathbb{Z}}
\let\sl\relax
\DeclareMathOperator{\s}{\mathbf{s}}
\DeclareMathOperator{\sl}{\widehat{\mathfrak{sl}}}
\DeclareMathOperator{\PP}{\mathcal{P}}
\DeclareMathOperator{\CC}{\mathcal{C}}
\DeclareMathOperator{\C}{\mathbb{C}}
\DeclareMathOperator{\sy}{\mathfrak{S}}
\DeclareMathOperator{\ind}{Ind}
\DeclareMathOperator{\res}{Res}
\DeclareMathOperator{\eres}{res}
\DeclareMathOperator{\irr}{Irr}
\DeclareMathOperator{\iind}{\mathit{i-}Ind}
\DeclareMathOperator{\rad}{rad}
\DeclareMathOperator{\End}{End}
\DeclareMathOperator{\KZ}{KZ}
\DeclareMathOperator{\soc}{soc}
\DeclareMathOperator{\head}{head}
\DeclareMathOperator{\QQ}{{\textbf{Q}}}
\DeclareMathOperator{\HH}{\mathcal{H}}
\DeclareMathOperator{\hh}{\mathbf{H}}
\DeclareMathOperator{\hhh}{\mathbf{h}}
\DeclareMathOperator{\modcat}{-mod}
\DeclareMathOperator{\vareps}{{\varepsilon}}
\DeclareMathOperator{\Oint}{\mathcal{O}_{\text{int}}}
\newcommand{\tei}{\widetilde{e_i}}
\newcommand{\tFi}{\widetilde{F_i}}
\newcommand{\tEi}{\widetilde{E_i}}
\newcommand{\tfi}{\widetilde{f_i}}
\begin{document}
\sloppy
\title{On the number of constituents of induced modules of Ariki-Koike algebras}

\address{RWTH Aachen University, Lehrstuhl D f\"ur Mathematik, Pontdriesch 14-16, 52062 Aachen, Germany}

%

\author{Christoph Schoennenbeck\\RWTH Aachen University}
\subjclass[2010]{05E10, 20C08, 17B10}
\email{christoph.schoennenbeck@rwth-aachen.de}
\thanks{Research supported by the German Research Foundation (DFG) research training group \textit{Experimental and constructive algebra} (GRK 1632).\\ }

\begin{abstract}
 We examine the crystal graph of the $\sl_e$-module arising from an $\sl_e$-categorification to study the defining endo-functors of the categorification. 
 This yields lower bounds on the number of irreducible constituents of certain objects.
 We use Ariki's categorification result on Ariki-Koike algebras to obtain a new lower bound on the number of constituents of their parabolically induced modules.
 In particular this will imply reducibility of every induced module.
\end{abstract}

\maketitle

\section*{Introduction}
Denote by $\sl_e$ the affine Lie algebra of type $A_{e-1}^{(1)}$, by $U(\sl_e)$ its enveloping algebra and by $U_v(\sl_e)$ its quantum enveloping algebra.
Much of the structure of certain classes of $U_v(\sl_e)$- and $U(\sl_e)$-modules can be encoded in so-called crystal graphs via the concepts of crystal bases and perfect bases, respectively, cf. e.g. \cite{HongKang, BerKaz}. 
These crystal graphs have a nice combinatorial description stemming from the realisation of irreducible highest weight modules as submodules of Fock spaces cf. \cite{JMMO, ArikiBook}.
We will exploit this combinatorial description to study categories possessing a so-called $\sl_e$-categorification of a $\C$-linear abelian category $\CC$, as defined in \cite{Rouquier}.
One key ingredient to such a categorification is a pair of adjoint endo-functors $(U,V)$ of $\CC$ which decompose as direct sums of $e$ summands, and this decomposition yields an $\sl_e$-module structure on the complexification of the Grothendieck group of $\CC$, i.e. on $\C\otimes_{\Z} R_0(\CC)$. \\
If this $\sl_e$-module is an element of the so-called category $\Oint$ of $\sl_e$-modules, we combine results by Shan and Chuang-Rouquier, cf. \cite{Shan, ChuangRouquier}, with a combinatorial observation to obtain a new lower bound on the number of constituents of images under $V$.\\
These results are applicable to a number of settings, in particular to category $\O$ of rational cyclotomic Cherednik algebras and the representations of Ariki-Koike algebras, cf. \cite{Shan,ArikiBook}.
Both are closely related to complex reflection groups of type $G(r,1,n)$, that is, groups of the form $\left(\Z/r\Z\right)\wr \sy_n$, where $\sy_n$ is the symmetric group on $n$ letters.
In both cases, the functor $V$ is given by parabolic induction, thus we can use our general result to study lower bounds on the number of constituents of parabolically induced modules.

Motivated by this result on parabolic induction for certain Ariki-Koike algebras we go on to prove analogous bounds for Ariki-Koike algebras with arbitrary invertible parameters, as not all parameter choices are covered by the $\sl_e$-categorification result: 
If $K$ is a field, then the Ariki-Koike algebra over $K$ is defined via generators and relations involving parameters $q,Q_1,\dots, Q_r\in K^*$.
The $\sl_e$-categorification result is known to hold in the case that $q\neq 1 $ is a root of unity of finite order and the parameters $Q_1,\dots, Q_r$ are so-called \emph{$q$-connected}, cf. \cite[Thm 12.5]{ArikiBook}. 
Hence, to obtain a complete result we first reduce the task of computing the number of constituents of induced modules to $q$-connected parameters via the Morita equivalence result by Dipper-Mathas, cf. \cite{DipperMathas}.\\
Then it remains to consider the cases that $q$ is either $1$ or has infinite order in $K^*$.
While the latter case is handled quite similarly to our study of $\sl_e$-categorification, the former requires some hands-on computation.\\
The main result on parabolic induction of Ariki-Koike algebras with arbitrary invertible parameters is then given in Theorem \ref{Main_Theorem_Hecke}.\\

This article is structured as follows:\\
We establish the necessary vocabulary for the representation theory of $\sl_e$, in particular integrable modules, category $\Oint$,  perfect bases, and crystal graphs.
Then we introduce certain crystal graphs, study their combinatorics, and indicate how to obtain the crystal graphs of all elements of $\Oint$ from the ones we defined. 
After recalling the definition of $\sl_e$-categorification we present our main result on general $\sl_e$-categorification in Theorem \ref{Thm_ConstituentsCategorification}.\\
As an application we consider parabolic induction in rational cyclotomic Cherednik algebras, cf. Theorem \ref{Theo_MainTheoremCherednik}.

The second chapter is concerned with the study of Ariki-Koike algebras and their parabolic induction. 
To obtain the desired lower bound we reduce the task to the case of$q$-connected parameter sets and handle the cases not covered by Ariki's $\sl_n$-categorification result separately.\\
We close by proving an analogue of Theorem \ref{Main_Theorem_Hecke} for the closely related degenerate cyclotomic Hecke algebras, cf. Theorem \ref{Thm_MainTheoremDegenerate}.

\section{\texorpdfstring{$\sl_e$-categorification}{\hat{sl_n}-categorification} and crystal graphs}
\subsection{The Kac-Moody algebra and crystal graphs}
 We start of by defining the affine Lie algebra $\sl_e$, i.e. the Kac-Moody algebra of type $A^{1}_{e-1}$ following \cite{HongKang}.
Let $e\geq 2$ be an integer. We give the following definitions only for $e\geq 3$, but for $e=2$ the construction is similiar, cf. \cite{Kac} for details.\\
Let $\mathfrak{h}$ be a $\C$-vector space with basis $\{h_1,\dots, h_{e-1}, d\}$ and $\{\Lambda_0,\dots, \Lambda_{e-1}, \partial\}$ a $\C$-basis of $\mathfrak{h}^*$ such that 
\[
 \Lambda_i(h_j)=\delta_{i,j}, \quad \Lambda_i(d)=\partial(h_i)=0, \quad \partial(d)=1, 
\]
for $0\leq i,j\leq e-1$. 
For ease of notation we set $\Lambda_z:=\Lambda_{z\pmod{e}}$ for any integer $z$.\\
For $0\leq i\leq e-1$ we define further elements of $\mathfrak{h}^*$ by 
\[
 \alpha_i:=-\Lambda_{i-1}+2\Lambda_i-\Lambda_{i+1}+\delta_{0,i}\partial.
\]

 The \emph{affine Lie algebra $\sl_e$} is the Lie algebra generated by the elements $e_i$, $f_i$ for $0\leq i\leq e-1$ and $\{h_0,\dots, h_{e-1},d\}$ subject to the following relations:
 \begin{gather*}
  [h,e_i]=\alpha_i(h)e_i,\\
  [h, f_i]=-\alpha_i(h)f_i,\\
  [e_i, f_j]=\delta_{i,j} h_i,\quad [h, h']=0,\\
  [e_i, [e_i, e_j]]=[f_i,[f_i,f_j]]=0, \ \text{if } (i-j)\equiv \pm1\pmod{e},\\
  [e_i,e_j]=[f_i,f_j]=0, \ \text{if } (i-j)\not\equiv \pm1\pmod{e},
 \end{gather*}
 for $h, h' \in \mathfrak{h}$ and $0\leq i,j\leq e-1$.\\
Its derived subalgebra ${\sl_e}^\prime=[\sl_e,\sl_e]$ is spanned by the elements $e_i$, $f_i$, and $h_i$ for $0\leq i\leq e-1$.

We call the $\Lambda_i$ the \emph{fundamental weights} of $\sl_e$ and $\partial$ the \emph{null root}.
Furthermore, the $\alpha_i$ are known as \emph{simple roots} and the $h_i$ as \emph{simple co-roots}
We define the \emph{weight lattice} $P:=\Z \partial\oplus\bigoplus_{i=0}^{e-1} \Z \Lambda_i$ and the \emph{dominant integral weights} $P^+:=\Z\partial\oplus \bigoplus_{i=0}^{e-1}\Z_{\geq 0}\Lambda_i$. 
Finally, we set $\overline{P^+}:=\bigoplus_{i=0}^{e-1}\Z_{\geq 0}\Lambda_i$, the\emph{ classical dominant integral weights.}

In the following we will be concerned with certain representations of $\sl_e$ or, equivalently, of $U(\sl_e)$, its universal enveloping algebra.
All modules studied here will have a \emph{weight space decomposition}:
For an $\sl_e$-module $M$ and some $\lambda\in \mathfrak{h}^*$ denote by $M_\lambda:=\{m\in M\mid hm=\lambda(h)m\text{ for all } h\in \mathfrak{h}\}$ the \emph{weight space of $M$ of weight $\lambda$}.

An $\sl_e$-representation is called \emph{integrable} if the Chevalley generators $e_i$ and $f_i$ for $0\leq i\leq e-1$ of $\sl_e$ act locally nilpotently.
We say that an $\sl_e$-module $M$ is in \emph{category $\Oint$} if
\begin{itemize}
\item $M$ is integrable, 
 \item $M$ has a weight space decomposition $M=\oplus_{\lambda}M_{\lambda}$ and $M_\lambda$ is finite dimensional for all $\lambda$,
 \item there exists a finite set $F\subseteq P$ such that $\text{wt}(M)\subseteq F+\sum_{j=0}^{e-1} \Z_{\leq 0}\alpha_i$, where $\text{wt}(M)$ is the set of weights $\lambda$ in $ P$ such that $M_\lambda\neq 0$.
\end{itemize}

If $M$ is in $\Oint$, then $M$ decomposes as a direct sum of \emph{irreducible highest weight modules} $L(\lambda)$ with weight $\lambda$, where $\lambda$ is in $P^+$, and every irreducible weight module $L(\lambda)$ with $\lambda $ in $P^+$ is an element of $\Oint$.\\
Every module $M$ in $\Oint$ has a \emph{perfect basis} in the sense of \cite{BerKaz}, i.e. a basis $B$ consisting of weight vectors equipped with functions $\tEi,\tFi: B\to B\dot{\cup} \{0\}$ for $0\leq i\leq e-1$ such that 
\begin{itemize}
 \item for $b, b'$ in $B$ it is $\tFi(b)=b'$ if and only if $\tEi (b')=b$,
 \item It is $\tEi(b)\neq 0$ if and only if $ e_ib\neq 0$, where $e_0, \dots, e_{n-1}$ and $f_0, \dots, f_{n-1}$ are again the Chevalley generators of $\sl_e$, 
 \item if $e_i b\neq 0$, then 
 \[
  e_i b \in \C^* \tEi(b)+V_i^{<\ell_i(b) -1}, 
 \]
 where $\ell_i(v):=\max\{j\geq 0\mid e_i^j v\neq 0\}$ and $V_i^{<k}:=\{v\in M\mid \ell_i(v)<k\}$.
\end{itemize}

To a perfect basis of $M$ we can associate an abstract crystal in the sense of \cite[Definition 4.5.1]{HongKang}. 
However, we will only be interested in its \emph{crystal graph}. 
If $M$ is in $\Oint$ with a crystal basis $B$, then the crystal graph associated to $B$ is a directed graph with coloured edges, whose vertex set is $B$ and for $b, b'$ in $B$ there is an edge $b\stackrel{i}{\rightarrow}b'$ with label $i$ if and only if $\tFi(b)=b'$.
\begin{definition}
 A \emph{crystal graph isomorphism} is an isomorphism of coloured graphs between crystal graphs of perfect bases, i.e. if $B$ and $C$ are perfect bases of modules $M$ and $N$, then a crystal isomorphism is a bijection $\phi:B\to C$ such that there is an edge $b\stackrel{i}{\rightarrow}b'$ in the crystal graph associated to $B$ if and only if there is an edge $\phi(b)\stackrel{i}{\rightarrow }\phi(b')$ in the crystal graph associated to $C$. 
\end{definition}
For modules in $\Oint$ there is only one associated crystal graph.
\begin{Lemma}
If $B$ and $B'$ are two perfect bases of $M\in \Oint$, then the crystal graphs associated to $B$ and $B'$ are isomorphic.
Thus, it makes sense to speak of the crystal graph associated to $M$.
\end{Lemma}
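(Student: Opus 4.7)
The plan is to combine the complete reducibility of modules in $\Oint$, namely $M \cong \bigoplus_{\lambda \in P^+} L(\lambda)^{\oplus m_\lambda}$, with the uniqueness of the crystal graph of each irreducible highest weight module $L(\lambda)$. The latter uniqueness can be read off from the combinatorial realisation of the crystals of $L(\Lambda_i)$ on multipartitions together with the tensor-product rule, cf.\ \cite{JMMO, ArikiBook}, and in this generality is also included in the foundational work of Berenstein-Kazhdan on perfect bases, cf.\ \cite{BerKaz}. Granted these two ingredients, it suffices to show that the crystal graph of any perfect basis $B$ of $M$ splits as a disjoint union of the crystal graphs of the $L(\lambda)$, each appearing with the multiplicity $m_\lambda$ which depends only on $M$.

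First I would locate the sources of the crystal graph of $B$, namely those $b \in B$ with $\tEi(b) = 0$ for every $i$. By the second axiom of a perfect basis these are precisely the elements of $B$ annihilated by every $e_i$, i.e.\ genuine $\sl_e$-highest weight vectors. Integrability of $M$ guarantees that starting from any $b \in B$ the iterated application of the $\tEi$ must terminate, so every connected component of the crystal graph contains at least one source. A weight-space dimension count, using that $B \cap M_\lambda$ is a basis of $M_\lambda$ and that the highest weight subspace of $M_\lambda$ is spanned by those elements of $B \cap M_\lambda$ on which every $e_i$ acts as zero (again by the second perfect-basis axiom), identifies the number of sources of weight $\lambda$ with the multiplicity $m_\lambda$ of $L(\lambda)$ in $M$.

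Second, for each source $b_\lambda \in B$ of weight $\lambda$ I would argue that the connected component of the crystal graph of $B$ containing $b_\lambda$ is isomorphic, as a coloured graph, to the standard crystal graph of $L(\lambda)$. The strategy is to consider the cyclic submodule $N := U(\sl_e) \cdot b_\lambda \cong L(\lambda)$ together with a direct complement $M = N \oplus N'$ supplied by complete reducibility, and to project the elements of $B$ along $N'$ onto $N$. Using the leading-term condition $e_i b \in \C^* \tEi(b) + V_i^{<\ell_i(b) - 1}$ of the third perfect-basis axiom, one verifies that the projected family is itself a perfect basis of $N$ and that its associated crystal graph coincides with the connected component of $b_\lambda$ in the crystal graph of $B$. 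Invoking uniqueness for $L(\lambda)$ then identifies the component with the standard crystal of $L(\lambda)$.

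The main obstacle is precisely this projection step: perfect bases are in general not compatible with direct sum decompositions of the underlying module, and it is exactly the filtration $V_i^{<k}$ appearing in the third axiom that controls the behaviour of the $\tEi$ under projection and makes the argument go through. Once this is settled, the bijection of sources from the first step extends component by component to an isomorphism between the crystal graphs of two perfect bases $B$ and $B'$, proving the lemma.
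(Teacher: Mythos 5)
Your proposal takes a genuinely different route from the paper: the paper simply cites Berenstein--Kazhdan's Main Theorem 5.37 (via Shan's Theorem 6.3), whereas you attempt to reconstruct the argument from first principles. The outline is sensible and roughly matches how one would expect such a proof to go, but there is a real gap at the step you yourself flag as ``the main obstacle.''

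Concretely: the assertion that projecting the elements of $B$ along $N'$ onto $N = U(\sl_e)\cdot b_\lambda$ yields a perfect basis of $N$ whose crystal graph coincides with the connected component of $b_\lambda$ is exactly the hard technical content of the Berenstein--Kazhdan theorem, and you state it without justification. Several things can go wrong a priori: the projections of elements of $B$ onto $N$ need not be linearly independent or even nonzero; the operators $\tEi$ need not commute with the projection; and the filtration $V_i^{<k}$ used in the leading-term axiom is defined with respect to $M$, not $N$, so it is not clear that the projected family satisfies the axiom relative to $N$. Saying ``it is exactly the filtration $V_i^{<k}$ that makes the argument go through'' identifies the relevant ingredient but does not constitute a proof. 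A secondary, more minor, issue is in the source-counting step: the claim that the full highest-weight subspace of $M_\lambda$ is spanned by those $b \in B \cap M_\lambda$ killed by every $e_i$ requires the leading-term condition of the \emph{third} axiom (to rule out cancellation when applying $e_i$ to a linear combination), not merely the second axiom as you cite. In short, the proposal is a plausible sketch of an independent proof of the Berenstein--Kazhdan uniqueness result, but without filling in the projection step it does not close; the paper avoids this entirely by citing \cite{BerKaz} and \cite{Shan}, which is the intended and much shorter route.
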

\begin{proof}
 This follows from \cite[Main Thm 5.37]{BerKaz} just as in the proof of \cite[Thm 6.3]{Shan}.
\end{proof}

The crystal graph associated to an element of $\Oint$ only depends on its ${\sl_e}^\prime$-structure.
\begin{Lemma}\label{Lemma_RestrictedModuleIsoGraphIso}
 Let $M, N\in \Oint$. 
 If $M$ and $N$ are isomorphic as ${\sl_e}^\prime$-modules, then their associated crystal graphs are isomorphic.
\end{Lemma}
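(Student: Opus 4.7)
The plan is to decompose both modules into irreducible highest weight summands and show that the crystal graph of each summand depends, as a coloured directed graph, only on the classical part of its highest weight; the conclusion then follows by matching multiplicities via the given ${\sl_e}^\prime$-iso. First I would fix $\sl_e$-module decompositions $M\cong\bigoplus_{\lambda\in P^+} L(\lambda)^{\oplus m_\lambda}$ and $N\cong\bigoplus_{\lambda\in P^+} L(\lambda)^{\oplus n_\lambda}$. Taking unions of perfect bases of the summands produces perfect bases of $M$ and $N$, so by the preceding lemma the crystal graph of $M$ is canonically the disjoint union of $m_\lambda$ copies of the crystal graph of $L(\lambda)$, and similarly for $N$.

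The central step is the claim that for $\lambda=\bar\lambda+c\partial$ with $\bar\lambda\in\overline{P^+}$ and $c\in\Z$, the crystal graph of $L(\lambda)$ depends only on $\bar\lambda$. To prove it I would use that $L(\lambda)$ and $L(\bar\lambda)$ are isomorphic as ${\sl_e}^\prime$-modules: both are the irreducible integrable ${\sl_e}^\prime$-module with the same ${\sl_e}^\prime$-highest weight, since $\partial$ vanishes on $h_0,\dots,h_{e-1}$. Any such iso maps the highest weight vector $v_\lambda$ to a scalar multiple of $v_{\bar\lambda}$, and hence sends $f_{i_1}\cdots f_{i_k}v_\lambda$ to a scalar multiple of $f_{i_1}\cdots f_{i_k}v_{\bar\lambda}$, so it restricts to isomorphisms $L(\lambda)_\mu\to L(\bar\lambda)_{\mu-c\partial}$ of $\mathfrak{h}$-weight spaces. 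Since the Chevalley generators act identically under the identification and the quantities $\ell_i$ and subspaces $V_i^{<k}$ depend only on the $e_i$-action, any perfect basis of $L(\lambda)$ transports to a perfect basis of $L(\bar\lambda)$ with literally the same coloured directed graph as crystal graph.

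Finally, since $L(\lambda)|_{{\sl_e}^\prime}$ and $L(\mu)|_{{\sl_e}^\prime}$ are isomorphic exactly when $\bar\lambda=\bar\mu$, the given ${\sl_e}^\prime$-iso $M\cong N$ forces $\sum_{c\in\Z}m_{\bar\lambda+c\partial}=\sum_{c\in\Z}n_{\bar\lambda+c\partial}$ for every $\bar\lambda\in\overline{P^+}$, and combining this with the two earlier observations yields the desired isomorphism of crystal graphs. The hard part is the middle step: verifying that the perfect basis conditions, phrased in terms of $\mathfrak{h}$-weight vectors and therefore apparently sensitive to the action of $d$, survive the passage to an ${\sl_e}^\prime$-iso. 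This reduces to the observation that in an irreducible integrable highest weight module the $\mathfrak{h}$-weight refinement of each ${\sl_e}^\prime$-weight space is rigidly determined by the ${\sl_e}^\prime$-structure together with the scalar eigenvalue of $d$ on the highest weight vector, so shifting $\lambda$ by $c\partial$ merely shifts every $\mathfrak{h}$-weight by $c\partial$ without disturbing the $\tEi,\tFi$ combinatorics.
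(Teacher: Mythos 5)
Your proof is correct but takes a genuinely different route from the paper. The paper's argument is a one-step transport: given the ${\sl_e}^\prime$-iso $\psi:M\to N$ and a perfect basis $(B,\tEi,\tFi)$ of $M$, it asserts that $\psi(B)$ together with $\psi\circ\tEi\circ\psi^{-1}$ and $\psi\circ\tFi\circ\psi^{-1}$ is a perfect basis of $N$, directly yielding a crystal graph isomorphism. You instead decompose both modules into irreducible highest weight summands, prove directly that the crystal graph of $L(\lambda)$ depends only on $\lambda \bmod \Z\partial$ (essentially re-deriving what the paper later states as Corollary~\ref{Cor_GraphHighestWeightMod}), and then match the ${\sl_e}^\prime$-isotypic multiplicities. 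What your approach buys is precision on the one genuinely delicate point, which the paper's terse argument glosses over: the definition of a perfect basis requires that $B$ consist of $\mathfrak{h}$-weight vectors, but an ${\sl_e}^\prime$-iso only respects $h_i$-eigenspaces, not $d$-eigenspaces, so $\psi(B)$ need not consist of $\mathfrak{h}$-weight vectors of $N$ in general (isotypic multiplicity of the same ${\sl_e}^\prime$-type can be spread over distinct $d$-shifts). By working one irreducible summand at a time, you force the iso to send $L(\lambda)_\mu$ onto $L(\bar\lambda)_{\mu-c\partial}$, so the transported basis is genuinely a weight basis and the perfect-basis axioms transfer verbatim. What the paper's route buys is brevity and the logical ordering in which Corollary~\ref{Cor_GraphHighestWeightMod} is a consequence of the lemma rather than an ingredient; your argument reverses that dependence, which is fine since you prove the needed special case from scratch rather than citing the corollary.
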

\begin{proof}
Let $\psi:M\to N$ be an ${\sl_e}^\prime$-module isomorphism and $\left(B, \tEi, \tFi\right)$ a perfect basis of $M$. 
As $\psi$ is compatible with the action of the $e_i$, it follows that $\psi(B)$ together with $\psi\circ \tEi\circ \psi^{-1}$ and $\psi\circ \tFi\circ \psi^{-1}$ constitutes a perfect basis of $N$. 
Hence, the restriction $\psi:B\to \psi(B)$ is a crystal graph isomorphism.
\end{proof}
\begin{Corollary}\label{Cor_GraphHighestWeightMod}
 Let $\lambda, \lambda'\in P^+$.
 If $\lambda\cong \lambda'\pmod{\Z\partial}$, then the crystal graphs associated to $L(\lambda)$ and $L(\lambda')$ are isomorphic.
\end{Corollary}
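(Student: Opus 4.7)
The plan is to reduce the statement to Lemma \ref{Lemma_RestrictedModuleIsoGraphIso} by showing that $L(\lambda)$ and $L(\lambda')$ are already isomorphic as modules over the derived subalgebra ${\sl_e}^\prime$ whenever $\lambda - \lambda' \in \Z\partial$. The core observation is that the element $d \in \mathfrak{h}$ (which is what distinguishes $\sl_e$ from ${\sl_e}^\prime$) is not involved in the action of the Chevalley generators, and dually $\partial$ evaluates to $0$ on every simple coroot $h_i$. So the $\partial$-component of a weight is invisible to ${\sl_e}^\prime$.

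Concretely, I would first set $\mathfrak{h}' := \bigoplus_{i=0}^{e-1} \C h_i \subseteq \mathfrak{h}$ and note that ${\sl_e}^\prime$ is generated by the $e_i, f_i, h_i$, so $\mathfrak{h}'$ is its Cartan subalgebra. Since $\partial(h_i) = 0$ for all $i$, the restrictions $\lambda|_{\mathfrak{h}'}$ and $\lambda'|_{\mathfrak{h}'}$ coincide when $\lambda - \lambda' \in \Z\partial$. Let $v_\lambda \in L(\lambda)$ be a highest weight vector. Because $L(\lambda)$ is generated over $\sl_e$ by $v_\lambda$ via the action of the $f_i$'s (all of which lie in ${\sl_e}^\prime$), it is in fact generated by $v_\lambda$ already over ${\sl_e}^\prime$. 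Moreover $v_\lambda$ is annihilated by the $e_i$, and any nonzero ${\sl_e}^\prime$-submodule of the weight-graded module $L(\lambda)$ must itself contain a highest weight vector and hence contain $v_\lambda$. Thus the restriction of $L(\lambda)$ to ${\sl_e}^\prime$ is the irreducible highest weight ${\sl_e}^\prime$-module of highest weight $\lambda|_{\mathfrak{h}'}$; the same holds for $L(\lambda')$ with highest weight $\lambda'|_{\mathfrak{h}'}$.

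Combining these facts, $L(\lambda) \cong L(\lambda')$ as ${\sl_e}^\prime$-modules. Since both $L(\lambda)$ and $L(\lambda')$ lie in $\Oint$ (they are irreducible highest weight modules with dominant integral highest weight), Lemma \ref{Lemma_RestrictedModuleIsoGraphIso} applies directly and yields an isomorphism of the associated crystal graphs, finishing the proof.

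I do not anticipate a serious obstacle here: the content of the statement is essentially that $\partial$ is orthogonal to the simple coroots, so the only minor care needed is in verifying that the restriction of an irreducible $\sl_e$-highest-weight module to ${\sl_e}^\prime$ remains irreducible — handled by the weight-grading argument above.
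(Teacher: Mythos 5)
Your proposal is correct and follows essentially the same route as the paper: reduce to Lemma \ref{Lemma_RestrictedModuleIsoGraphIso} by showing $L(\lambda)$ and $L(\lambda')$ are isomorphic as ${\sl_e}^\prime$-modules. The only difference is that the paper simply cites \cite[\S 9.10]{Kac} for this ${\sl_e}^\prime$-isomorphism, while you supply the standard underlying argument (restriction to ${\sl_e}^\prime$ stays irreducible because $v_\lambda$ generates under the $f_i$'s and any nonzero ${\sl_e}^\prime$-submodule, being $\mathfrak{h}'$-graded and acted on locally nilpotently by the $e_i$'s with bounded weights, must contain a singular vector and hence $v_\lambda$; the ${\sl_e}^\prime$-highest weights $\lambda|_{\mathfrak{h}'}$ and $\lambda'|_{\mathfrak{h}'}$ coincide since $\partial(h_i)=0$).
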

\begin{proof}
 As ${\sl_e}^\prime$-modules, $L(\lambda)$ and $L(\lambda')$ are isomorphic, cf. \cite[\S 9.10]{Kac}.
\end{proof}

In the following we describe the crystal graph associated to an irreducible highest weight module $L(\lambda)$ for $\lambda\in P^+$. 
Since elements of $\Oint$ are direct sums of such highest weight modules, this will provide a description of the crystal graphs associated to all elements of $\Oint$.\\
We fix yet more notation.\\
For a positive integer $r$ and a tuple $\mathbf{t}=(t_1,\dots, t_r)\in\Z^{r}$ we define $\Lambda_{\mathbf{t}}:=\Lambda_{t_1}+\dots+\Lambda_{t_r}$.\\
For $\lambda=\Lambda_{s_1}+\dots+\Lambda_{s_r}+k\partial\in P^+$ we define $\s_\lambda$ to be the unique element of $\widetilde{\Z_{\geq 0}^r}:=\left\{(t_1,\dots, t_r)\mid 0\leq t_1\leq \dots\leq t_r<e \right\}$ such that $\Lambda_{\s_{\lambda}}\equiv \lambda\pmod{\Z\partial}$.
Clearly, $\s_\lambda$ is well-defined, since the sum $\Lambda_{s_1}+\dots+\Lambda_{s_r}$ is uniquely defined by the multiset $\{s_1\pmod{e},\dots, s_r\pmod{e}\}$.
\\
By Corollary \ref{Cor_GraphHighestWeightMod}, the crystal graph associated to $L(\lambda)$ only depends on $\Lambda_{\s_\lambda}$ up to crystal graph isomorphism.
Thus, until the end of this section we fix an integer $r\geq 1$ and some $\s\in \widetilde{\Z_{\geq 0}^r}$. \\

The description of the crystal graph associated to $L(\lambda)$ has been achieved via Fock space theory. 
To state it we need some combinatorial ground work.  \\
Let $n$ be a non-negative integer. A \emph{partition of $n$} is a tuple $\alpha=(\alpha_1,\dots, \alpha_\ell)$ of non-increasing non-negative integers $\alpha_1,\dots, \alpha_\ell$ such that $|\alpha|:=\sum_{i=1}^\ell \alpha_i=n$.
We write $\alpha\vdash n$ if $\alpha$ is a partition of $n$.
An \emph{$r$-multipartition} of $n$ is a tuple $\lambda=(\mu^{(1)},\dots, \lambda^{(r)})$ where each $\lambda^{(i)}$ is a partition and $|\lambda|:=\sum_{i=1}^r |\lambda^{(i)}|=n$. 
We write $\lambda\vdash_r n$ if $\lambda$ is an $r$-multipartition of $n$.
The \emph{Young diagram $[\lambda]$} of an $r$-multipartition $\lambda\vdash_r n$ is the set 
\[
 \left\{(a,b,c)\mid 1\leq \lambda^{(c)}_a\leq b, \ 1\leq c\leq r\right\}.
\]
The elements of $[\lambda]$ are called \emph{nodes}. 
More generally, we call any element of $\N\times\N\times \{1,\dots, r\}$ a node. 
A node $x$ is called an \emph{addable node of $\lambda$} if $x\notin [\lambda]$ and $[\lambda]\cup \{x\}$ is the Young diagram of an $r$-multipartition of $n+1$. 
We write $\lambda\cup \{x\}$ for the corresponding $r$-multipartition.\\
Similarly, $x$ is called a \emph{removable node of $\lambda$} if $x\in [\lambda]$ and $[\lambda]\setminus \{x\}$ is the Young diagram of an $r$-multipartition of $n-1$.
We write $\lambda\setminus \{x\}$ for the corresponding $r$-multipartition.
Finally, if a multipartition $\mu$ is obtained from $\lambda$ by adding exactly one node, say $x$, i.e. $\mu=\lambda\cup \{x\}$, we write $\lambda\setminus \mu=x$.

 For two nodes $x=(a,b,c)$ and $y=(a',b',c')$ of $\lambda$ we say that \emph{$x$ lies above or higher than $y$} if $c<c'$ or $c=c'$ and $a<a'$ or $c=c', a=a'$ and $b>b'$. 
 We also say that $y$ lies below or lower than $x$.\\
 This is the abstract notion of the usual visual way of writing down the diagram of $\lambda$ by depicting the diagrams of the components below one another, starting with $\lambda^{(1)}$.
  This yields a total order on the Young diagram $[\lambda]$.

\begin{definition}
 Let $\lambda\vdash_r n$.
 The \emph{residue of a node $x=(a,b,c)\in [\lambda]$} (with respect to $\s$) is defined as $\eres(x):= b-a +s_c \pmod{e}$.
 If $\eres(x)\equiv i\pmod{e}$ for $0\leq i\leq e-1$, then we call $x$ an $i$-node. \\
 Addable $i$-nodes are called $i$-addable and removable $i$-nodes are called $i$-removable.\\
\end{definition}

We follow \cite{ArikiBook} to define a number of different objects to construct a certain crystal graph.\\
We define normal, co-normal, good, and co-good nodes of $\lambda\vdash_r n$:\\
Choose some $1\leq i\leq e-1$ and write down the sequence of addable and removable $i$-nodes sorted from highest to lowest.
Encode every addable node with the symbol $+_i$ and every removable one with the symbol $-_i$. 
The resulting sequence is called the \emph{$i$-signature of $\lambda$}.\\
Now recursively remove all pairs $-_i+_i$ from this sequence until this is no longer possible to finally obtain the \emph{reduced $i$-signature of $\lambda$,} which we denote by $w_{e,\s}(\lambda)$.\\
The nodes corresponding to $-_i$ in $w_{e,\s}(\lambda)$ are called $i$-normal.\\
The nodes corresponding to $+_i$ in $w_{e,\s}(\lambda)$ are called $i$-co-normal.\\
The highest $i$-normal node is called $i$-good.\\
The lowest $i$-co-normal node is called $i$-co-good.
A node is called normal (co-normal, good, co-good) if it is $i$-normal ($i$-co-normal, $i$-good, $i$-co-good) for some $i$.

We use these operators to define some directed graph with coloured edges.
\begin{definition}\label{def_crystalfockspacehighestweight}
 Let $\PP_{n, r}$ be the set of all $r$-multipartitions of $n$ and set $\PP_r:=\cup_{n\in \Z_{\geq 0}} \PP_{n,r}$. \\
 On $\PP_r$ define operators $\tei$ and $\tfi$ by 
 \[
  \tei(\lambda)=\begin{cases}
                 \lambda\setminus \{x\}, \quad \text{if } x \text{ is the $i$-good node of $\lambda$}\\
                 0, \quad \text{if $\lambda$ does not have an $i$-good node}
                \end{cases}
 \]
 and 
  \[
  \tfi(\lambda)=\begin{cases}
                 \lambda\cup \{x\}, \quad \text{if } x \text{ is the $i$-co-good node of $\lambda$}\\
                 0, \quad \text{if $\lambda$ does not have an $i$-co-good node}.
                \end{cases}
 \]
 By $B_e(\mathcal{F}(\Lambda_{\s}))$ we denote the directed graph with vertex set $\PP_r$ and edges $\lambda\stackrel{i}{\rightarrow}\mu$ if and only if $\tfi(\lambda)=\mu$, or, equivalently, 
 $\tei(\mu)=\lambda$.\\
 Furthermore, we denote by $B_e(\s)$ the subgraph of $B_e(\mathcal{F}(\Lambda_{\s}))$ defined by the connected component containing the empty partition $\emptyset\vdash_r 0$.
 We call the elements of $B_e(\s)$ \emph{Kleshchev multipartitions}.\\
 Finally, for $\lambda\in \PP_r$ we define 
 \[
  \varphi_i(\lambda):=\max\left\{j\geq 0\mid \tfi^j(\lambda)\neq 0\right\}\quad \text{and}\quad \vareps_i(\lambda):=\max\left\{j\geq 0\mid \tei^j(\lambda)\neq 0\right\}.
 \]
 
 To complete the definition for $r=0$ we define $B_e(\mathbf{0})$ to the be directed graph with exactly one vertex and no edges, where we denote by $\mathbf{0}$  the empty sequence and for consistency we set $\widetilde{\Z_{\geq 0}^0}:=\{\mathbf{0}\}$.
\end{definition}
We can now describe the crystal graphs associated to irreducible highest weight modules.

 \begin{Lemma}
 \label{Lemma_CrystalOfFockSpace}
 Let $\lambda\in P^+$.
 The crystal graph associated to $L(\lambda)$ is isomorphic to $B_e(\s_\lambda)$.
\end{Lemma}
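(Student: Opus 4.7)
The plan is to realise $L(\lambda)$ as a submodule of a Fock space whose perfect basis, as provided by the classical constructions in \cite{JMMO, ArikiBook}, is precisely indexed by $\PP_r$ with the crystal operators $\tei, \tfi$ of Definition \ref{def_crystalfockspacehighestweight}. By Corollary \ref{Cor_GraphHighestWeightMod} and Lemma \ref{Lemma_RestrictedModuleIsoGraphIso} I may assume without loss of generality that $\lambda=\Lambda_{\s}$ with $\s=\s_\lambda$, since the crystal graph is determined by the ${\sl_e}^\prime$-module structure and hence by $\lambda$ modulo $\Z\partial$.

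The first step is to recall the level-$r$ Fock space $\mathcal{F}(\Lambda_{\s})$ with $\C$-basis $\PP_r$ and the JMMO action of $\sl_e$: the Chevalley generator $f_i$ sends $\lambda$ to a signed sum over all $i$-addable nodes $x$ of $\lambda\cup\{x\}$, with the sign determined by counting addable minus removable $i$-nodes above $x$, and $e_i$ acts analogously on $i$-removable nodes (cf.\ \cite[Ch.~10]{ArikiBook}). This makes $\mathcal{F}(\Lambda_{\s})$ an integrable module belonging to $\Oint$, with weight of a basis vector $\lambda$ equal to $\Lambda_{\s}-\sum_i n_i(\lambda)\alpha_i$ where $n_i(\lambda)$ is the number of $i$-nodes of $\lambda$.

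The second step is to verify that $\PP_r$ together with the operators $\tei,\tfi$ forms a perfect basis of $\mathcal{F}(\Lambda_{\s})$ in the sense recalled before Definition \ref{def_crystalfockspacehighestweight}. The weight-vector axiom is immediate from the weight calculation above, and the compatibility $\tfi(\lambda)=\mu\Leftrightarrow \tei(\mu)=\lambda$ is built into the combinatorial definition of good/co-good nodes. The remaining leading-term axiom $e_i\lambda\in\C^{*}\tei(\lambda)+V_i^{<\ell_i(\lambda)-1}$ is exactly the content of the ladder analysis of the $i$-signature: after cancellation of adjacent $-_i+_i$ pairs, the highest $-_i$ contributes the term removing the $i$-good node with nonzero scalar, while all other summands lie in a strictly lower string-length space. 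This is the technically substantial input and I would simply cite \cite[Thm 10.10, Thm 11.6]{ArikiBook} rather than reprove it; this is the main obstacle of the argument.

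With the perfect basis in hand, the submodule of $\mathcal{F}(\Lambda_{\s})$ generated by the vacuum vector $\emptyset$ is an irreducible highest weight module of highest weight $\Lambda_{\s}$, hence isomorphic to $L(\Lambda_{\s})$. The crystal graph of a perfect basis breaks up as a disjoint union over $\sl_e$-submodule components, so the restriction of $(\PP_r,\tei,\tfi)$ to this submodule is the connected component of $\emptyset$, which by Definition \ref{def_crystalfockspacehighestweight} is exactly $B_e(\s)$. Finally, the uniqueness of the crystal graph for modules in $\Oint$ established in the lemma preceding this statement yields that the crystal graph of $L(\lambda)$ is isomorphic to $B_e(\s_\lambda)$, as required.
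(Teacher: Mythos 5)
Your central step---that $\PP_r$ together with the operators $\tei,\tfi$ of Definition \ref{def_crystalfockspacehighestweight} forms a perfect basis of the \emph{classical} Fock space $\mathcal{F}(\Lambda_{\s})$---is false, and this is where the argument breaks down. Recall that a perfect basis must satisfy $\tEi(b)\neq 0$ if and only if $e_ib\neq 0$. In $\mathcal{F}(\Lambda_{\s})$ the action is $e_i|\lambda,\s\rangle=\sum_\mu|\mu,\s\rangle$, summed over \emph{all} $i$-removable nodes, whereas $\tei(\lambda)$ is governed only by the reduced $i$-signature. These two vanishing conditions do not agree. Concretely, take $e=2$, $r=1$, $\s=(0)$ and $\lambda=(2,1,1)$. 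Its $1$-signature, read from highest to lowest, is $-_1+_1$ (removable node $(1,2,1)$, addable node $(4,1,1)$), which reduces to the empty word, so $\tilde{e}_1(\lambda)=0$; yet $e_1|\lambda,\s\rangle=|(1,1,1),\s\rangle\neq 0$. So $(\PP_r,\tei,\tfi)$ fails the perfect basis axioms, and the ``ladder analysis'' you invoke cannot be carried out on the naive monomial basis. The citations to \cite[Thm 10.10, Thm 11.6]{ArikiBook} do not help here: those statements concern the \emph{quantum} Fock space and Kashiwara's (lower) crystal basis at $v=0$, where the $q$-powers in the JMMO action are precisely what make the leading-term structure work out.

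This is exactly the difficulty the paper's proof is designed to circumvent. The paper works with the quantum deformation $L_v(\lambda)$ and its lower crystal basis (for which Ariki \cite[Thm 11.8]{ArikiBook} identifies the crystal graph as $B_e(\s_\lambda)$), and then uses the comparison of crystal bases with perfect bases from \cite[Sec 1.5, 1.6]{DudasVaraVass} to transfer this identification to a perfect basis of the classical module $L(\lambda)$. That transfer step is essential and is not replaceable by a direct inspection of the monomial basis of $\mathcal{F}(\Lambda_{\s})$. (A secondary issue in your last paragraph: even if $\PP_r$ were a perfect basis of $\mathcal{F}(\Lambda_{\s})$, the irreducible submodule generated by $\emptyset$ is \emph{not} spanned by the subset $B_e(\s)\subseteq\PP_r$, so you cannot simply ``restrict'' the perfect basis to it; you would again need the Berenstein--Kazhdan uniqueness theorem to identify the relevant connected component with the crystal of $L(\Lambda_{\s})$.) Also note that the paper must separately handle the degenerate case $\s_\lambda=\mathbf{0}$, i.e.\ $\lambda\in\Z\partial$, where there is no Fock space of positive level to embed into; your proposal does not address this.
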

\begin{proof}
By Corollary \ref{Cor_GraphHighestWeightMod}, we can assume without loss of generality that $\lambda\in \overline{P^+}=\bigoplus_{i=0}^{e-1}\Z_{\geq 0}\Lambda_i$.
 Let $t\geq 0$ be defined by $\s_\lambda$ lying in $\widetilde{\Z_{\geq 0}^t}$. \\
 First assume $t>0$. 
 The module $L(\lambda)$ admits a quantum deformation $L_v(\lambda)$ whose specialisation at $v=1$ is isomorphic to $L(\lambda)$. 
 By \cite[Thm 11.8]{ArikiBook}, the crystal graph associated to $L_v(\lambda)$ by a lower crystal basis is isomorphic to $B_e(\s_\lambda)$.
 The discussion of crystals and perfect bases in \cite[Sec 1.5, 1.6]{DudasVaraVass} then shows that this is isomorphic to the crystal graph afforded by a perfect basis of $L(\lambda)$.\\
 Now assume $t=0$. 
 Then $\lambda=0$, because we assumed $\lambda \in \overline{P^+}$.
 By \cite[Rem 2.4.5]{HongKang} and the definition of irreducible highest weight modules we see that $L(0)$ is one-dimensional as a vector space and the $e_i$ and the $f_i$ act as zero on $L(0)$. 
 Thus, any basis of $L(0)$ is a perfect basis and the associated crystal graph is isomorphic to $B_e(\mathbf{0})$.
\end{proof}
The graph $B_e(\mathcal{F}(\Lambda_{\s}))$, too, is a crystal graph.
Namely, its the crystal graph associated to a Fock space, whose definition we give below.
\begin{Definition}[cf. {\cite[Def 10.9]{ArikiBook}}]
 To $\Lambda_{\s}$ we associate an $\sl_e$-module $\mathcal{F}(\Lambda_{\s})$ as follows:
 As a vector space, a basis is given by symbols $|\lambda, \s\rangle$, where $\lambda$ ranges over all $r$-multipartitions of $n$ for all $n\geq 0$.
 Let $\lambda\vdash_r n$.
 For $0\leq i\leq e-1$, we have $e_i|\lambda, \s\rangle=\sum_{\mu}|\mu, \s\rangle$, where $\mu$ ranges over all multipartitions obtained from $\lambda$ by removing an $i$-removable node.
 Similarly, we have $f_i|\lambda, \s\rangle=\sum_{\nu}|\nu, \s\rangle$, where $\nu$ rages over all multipartitions obtained from $\lambda$ by adding an $i$-addable node.
 Moreover, we have $h_i|\lambda, \s\rangle=N_i(\lambda)|\lambda, \s\rangle$, where $N_i(\lambda)$ is the number of $i$-addable nodes of $\lambda$ minus the number of its $i$-removable nodes.
 Finally, we have $d|\lambda, \s\rangle=-M_0(\lambda)$, where $M_0(\lambda)$ is the total number of $0$-nodes of $\lambda$.
\end{Definition}

\begin{Lemma}
 The module $\mathcal{F}(\Lambda_{\s})$ is in $\Oint$ and its associated crystal graph is isomorphic to $B_e(\mathcal{F}(\Lambda_{\s}))$.
\end{Lemma}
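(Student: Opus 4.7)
The plan is to split the statement into two parts: verifying membership in $\Oint$, and identifying the crystal graph with $B_e(\mathcal{F}(\Lambda_{\s}))$.

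For the first part, I would compute the weight of each basis vector $|\lambda, \s\rangle$ explicitly. From the prescribed action $h_i|\lambda, \s\rangle = N_i(\lambda)|\lambda, \s\rangle$ and $d|\lambda, \s\rangle = -M_0(\lambda)|\lambda, \s\rangle$, together with the standard combinatorial identity
\[
N_i(\lambda) = \#\{c : s_c \equiv i \pmod e\} + M_{i-1}(\lambda) - 2M_i(\lambda) + M_{i+1}(\lambda),
\]
where $M_j(\lambda)$ denotes the total number of $j$-nodes of $\lambda$, one obtains $\text{wt}(|\lambda, \s\rangle) = \Lambda_{\s} - \sum_{j=0}^{e-1} M_j(\lambda)\alpha_j$. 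The weight space decomposition is then immediate. Each weight space is finite-dimensional because a weight determines the tuple $(M_0(\lambda), \ldots, M_{e-1}(\lambda))$ (the simple roots $\alpha_j$ being linearly independent in $\mathfrak{h}^*$), and only finitely many multipartitions have prescribed residue counts. The cone condition of $\Oint$ is met trivially by $F := \{\Lambda_{\s}\}$. Integrability of $e_i$ is immediate since on each nonzero summand of $e_i|\lambda, \s\rangle$ the quantity $M_i$ strictly decreases, so $e_i^{M_i(\lambda)+1}|\lambda, \s\rangle = 0$; integrability of $f_i$ on a given $|\lambda, \s\rangle$ follows either by tracking the growth of the $i$-signature combinatorially or by applying standard $\mathfrak{sl}_2$-theory to the triple $(e_i, f_i, h_i)$ once $e_i$-nilpotency and the weight space decomposition are in place.

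For the second part, the natural basis $B := \{|\lambda, \s\rangle : \lambda \in \PP_r\}$ is already a weight basis, and under the bijection $\lambda \leftrightarrow |\lambda, \s\rangle$ one transfers the operators $\tei, \tfi$ of Definition \ref{def_crystalfockspacehighestweight} to operators $\tEi, \tFi$ on $B$. The claim is that this gives a perfect basis, and the induced crystal graph is by construction $B_e(\mathcal{F}(\Lambda_{\s}))$. The non-trivial axiom to check is that when $e_i$ is applied to $|\lambda, \s\rangle$, the summand $|\lambda \setminus \{x\}, \s\rangle$ corresponding to the $i$-good node $x$ has nonzero coefficient, while all other summands $|\lambda \setminus \{y\}, \s\rangle$ lie in $V_i^{<\ell_i(|\lambda, \s\rangle) - 1}$. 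This is exactly the classical Misra--Miwa--Jimbo--Okado crystal structure on Fock space; I would extract it by specialising the quantum statement \cite[Thm 11.8]{ArikiBook} at $v = 1$ by the same argument used in the proof of Lemma \ref{Lemma_CrystalOfFockSpace}, invoking \cite[Sec 1.5, 1.6]{DudasVaraVass} to transfer from lower crystal basis to perfect basis.

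The main obstacle is this perfect-basis axiom: one must show that among all $i$-removable nodes of $\lambda$, the $i$-good node is singled out as producing the unique leading term with respect to $\ell_i$, and this requires the delicate signature-cancellation argument at the heart of the Misra--Miwa construction. Rather than reprove it from scratch here, I would rely on the quantum result and specialise, paralleling the treatment of Lemma \ref{Lemma_CrystalOfFockSpace}.
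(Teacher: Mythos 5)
Your treatment of the crystal graph identification is essentially the paper's own: the paper's proof is a one-line citation that specialises the quantum result \cite[Thm 11.8]{ArikiBook} at $v=1$ and transfers from lower crystal bases to perfect bases via \cite[Sec 1.5, 1.6]{DudasVaraVass}, exactly as in the proof of Lemma~\ref{Lemma_CrystalOfFockSpace} for $L(\lambda)$, and this is precisely what you propose for the second part. Where you go further is the explicit verification of $\mathcal{F}(\Lambda_{\s})\in\Oint$, which the paper subsumes into the same citation: your weight computation $\text{wt}(|\lambda,\s\rangle)=\Lambda_{\s}-\sum_j M_j(\lambda)\alpha_j$ is correct (it matches the defining actions of $h_i$ and $d$ via the standard identity for $N_i(\lambda)$), the finite-dimensionality and cone arguments are sound, and $e_i$-nilpotence is immediate. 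For $f_i$-nilpotence your sketch would need to be tightened — the usual $\mathfrak{sl}_2$-argument requires local finiteness, which is what you are trying to prove — but the combinatorial route does close: in any single row of a component at most one $i$-node can be added before the residue shifts, so the total number of $i$-nodes addable to $\lambda$ is bounded by the number of rows of $\lambda$ plus $r$, giving $f_i^{N}|\lambda,\s\rangle=0$ for $N$ large. So the proposal is correct and adds a self-contained account of the $\Oint$-membership that the paper leaves to the references; for the crystal isomorphism it follows the paper's route exactly.
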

\begin{proof}
 This follows from \cite[11.8]{ArikiBook} and \cite[Sec 1.5, 1.6]{DudasVaraVass} as in the proof of Lemma \ref{Lemma_CrystalOfFockSpace}.
\end{proof}
\begin{remark}
 Let $\mathbf{t}=(t_1,\dots, t_r)\in \Z^r$.
 If the multisets $\{t_1\pmod{e},\dots, t_r\pmod{e}\}$ and $\{s_1\pmod{e},\dots, s_r\pmod{e}\}$ are equal, then we have $\Lambda_{\mathbf{t}}=\Lambda_{\s}$ and therefore $\mathcal{F}(\Lambda_{\s})=\mathcal{F}(\Lambda_{\mathbf{t}})$.
\end{remark}

We summarise our description of crystal graphs associated to arbitrary elements of $\Oint$.
 
\begin{Corollary}\label{Cor_CrystalIsomorphismToDirectSum}
 Let $M\in \Oint$ and suppose that $M\cong \oplus_{j} L(\lambda_j)$ for $\lambda_j \in P^+$ is a decomposition into irreducible highest weight modules.
 Then the crystal graph of $M$ is isomorphic to $\coprod_{j} B_e(\s_{\lambda_j})$, where $\coprod$ denotes the disjoint union of graphs.
\end{Corollary}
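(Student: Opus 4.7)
The plan is to assemble a perfect basis of $M$ from perfect bases of the summands and read off the crystal graph. Since $M$ lies in $\Oint$, an earlier lemma tells us that any two crystal graphs associated to perfect bases of $M$ are isomorphic, so it suffices to exhibit one convenient perfect basis and compute its crystal graph.

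Concretely, I would first invoke Lemma \ref{Lemma_CrystalOfFockSpace} for each summand: for every $j$, choose a perfect basis $B_j$ of $L(\lambda_j)$ together with its operators $\widetilde{E_i}^{(j)}$ and $\widetilde{F_i}^{(j)}$, so that the associated crystal graph is isomorphic to $B_e(\s_{\lambda_j})$. Using a fixed $\sl_e$-module isomorphism $M \cong \bigoplus_j L(\lambda_j)$, transport these bases into $M$ to obtain a weight-vector basis $B := \bigsqcup_j B_j$. Define $\tEi$ and $\tFi$ on $B$ summand-wise, i.e.\ by $\widetilde{E_i}^{(j)}$ and $\widetilde{F_i}^{(j)}$ on $B_j$. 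Because each $e_i$ and $f_i$ preserves each summand $L(\lambda_j)$, the three defining axioms of a perfect basis (compatibility of $\tEi$ with $\tFi$, the equivalence $\tEi(b)\neq 0 \Leftrightarrow e_i b\neq 0$, and the leading-term condition $e_i b\in \C^*\tEi(b)+V_i^{<\ell_i(b)-1}$) pass directly from the $B_j$ to $B$: both sides of each axiom are tested within the individual summand to which $b$ belongs, and the filtration $V_i^{<k}$ restricts to the analogous filtration on each $L(\lambda_j)$.

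Now read off the crystal graph of $B$. By construction, there is an $i$-edge $b \stackrel{i}{\to} b'$ in $B$ precisely when $b, b'$ belong to the same $B_j$ and $\widetilde{F_i}^{(j)}(b) = b'$ there. Hence the crystal graph associated to $B$ is literally the disjoint union $\coprod_j (\text{crystal graph of } B_j)$, which by our choice of the $B_j$ is isomorphic to $\coprod_j B_e(\s_{\lambda_j})$. By the uniqueness lemma for crystal graphs of modules in $\Oint$, this graph is (up to isomorphism) the crystal graph of $M$, completing the proof.

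There is no real obstacle here; the only point requiring a brief sanity check is that the componentwise $\tEi,\tFi$ still satisfy the leading-term condition in $M$, and this is immediate because $e_i$ acts block-diagonally on $\bigoplus_j L(\lambda_j)$ so the filtration $V_i^{<k}\subseteq M$ splits as the direct sum of the corresponding filtrations on each $L(\lambda_j)$.
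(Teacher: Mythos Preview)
Your argument is correct and is precisely the natural one: the paper states this corollary without proof, treating it as an immediate consequence of Lemma~\ref{Lemma_CrystalOfFockSpace} and the uniqueness of the crystal graph in $\Oint$. You have simply spelled out the details the paper leaves implicit, and the one subtlety you flag---that the filtration $V_i^{<k}$ splits along the direct sum decomposition because the $e_i$ act block-diagonally---is exactly what makes the verification go through.
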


Following this, we can obtain information about modules in $\Oint$ via solely combinatorial observations, so we study the graphs $B_e(\s)$ and $B_e(\mathcal{F}(\Lambda_{\s}))$ in some more detail.

The first two results are well-known.
\begin{Lemma}\label{Lem_LambdaHasGoodNode}
Let $r\geq 1$ and $\s\in\widetilde{\Z_{\geq 0}^r}$.
 If $\lambda$ is a vertex in $B_e(\s)$, then either $\lambda=\emptyset\vdash_r 0$ or $\lambda$ has an $i$-good node.
\end{Lemma}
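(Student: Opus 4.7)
The plan is to reduce the statement to a known property of highest-weight crystals via Lemma \ref{Lemma_CrystalOfFockSpace}. That lemma identifies $B_e(\s)$ with the crystal graph of the irreducible highest-weight module $L(\Lambda_\s)\in\Oint$, under which the empty multipartition $\emptyset \vdash_r 0$ corresponds to the highest-weight vector $v_{\Lambda_\s}$.

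The key input I would invoke is the classical fact that, in the crystal graph of an irreducible highest-weight module, every vertex is reachable from the highest-weight vertex by a directed path built solely from the $\tfi$-operators. Heuristically, this is the combinatorial shadow of $L(\Lambda_\s)$ being generated by $v_{\Lambda_\s}$ under the action of the Chevalley generators $f_0,\dots, f_{e-1}$; rigorously, one can cite \cite[Main Thm 5.37]{BerKaz}, or transport the analogous statement across the quantum deformation $L_v(\Lambda_\s)$, exactly as in the proof of Lemma \ref{Lemma_CrystalOfFockSpace}.

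Granting this, for any $\lambda \neq \emptyset$ in $B_e(\s)$, there exists a directed path
\[
\emptyset = \lambda_0 \stackrel{i_1}{\rightarrow} \lambda_1 \stackrel{i_2}{\rightarrow} \cdots \stackrel{i_k}{\rightarrow} \lambda_k = \lambda
\]
in $B_e(\s)$ with $k = |\lambda| > 0$. By Definition \ref{def_crystalfockspacehighestweight}, the existence of the final edge $\lambda_{k-1} \stackrel{i_k}{\rightarrow} \lambda$ is equivalent to $\widetilde{e_{i_k}}(\lambda) = \lambda_{k-1} \neq 0$, which by the very definition of the operator $\widetilde{e_{i_k}}$ means that $\lambda$ possesses an $i_k$-good node, as required.

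The main obstacle is justifying the crystal-theoretic statement that every vertex of $B_e(\s)$ lies in the forward orbit of $\emptyset$ under the $\tfi$'s; this is the step lurking behind the author's remark that the result is well-known. Once this is accepted, the conclusion is a one-line observation about the final edge of a directed path and requires no further combinatorial input.
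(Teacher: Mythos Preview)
The paper does not actually prove this lemma; it is stated without proof under the remark that ``the first two results are well-known.'' Your argument is a correct way to supply the missing justification, and it is essentially the standard one: identify $B_e(\s)$ with the crystal of the irreducible highest-weight module $L(\Lambda_\s)$, invoke the fact that such a crystal has a unique source vertex (equivalently, every vertex lies in the forward $\tfi$-orbit of the highest-weight vertex), and read off the existence of an $i$-good node from the last edge of a directed path.

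One small comment: you correctly flag the only nontrivial step, namely that every vertex of $B_e(\s)$ is reachable from $\emptyset$ by a \emph{directed} path. This does not follow from the bare definition of $B_e(\s)$ as the connected component of $\emptyset$ in $B_e(\mathcal{F}(\Lambda_\s))$, since weak connectivity in a graded directed graph does not in general force directed reachability from the minimum. So your appeal to the highest-weight crystal structure (via Lemma~\ref{Lemma_CrystalOfFockSpace} and \cite{BerKaz} or the quantum deformation) is genuinely needed, not merely decorative. An equivalent and perhaps slightly cleaner phrasing is the contrapositive: if $\lambda$ has no $i$-good node for any $i$, then $\tei(\lambda)=0$ for all $i$, so $\lambda$ is a highest-weight vertex; since an irreducible highest-weight crystal has a unique such vertex, $\lambda=\emptyset$.
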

\begin{Lemma}\label{Lemma_NUmberOfBoxesIsVarphi}
 Let $r\geq 1$,  $\lambda\vdash_r n$, and $0\leq i\leq e-1$.\\
 The number of $i$-co-normal nodes of $\lambda$ is exactly $\varphi_i(\lambda)$.
 Similarly, the number of $i$-normal nodes of $\lambda$ is exactly $\vareps_i(\lambda)$.
\end{Lemma}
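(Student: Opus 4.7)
The plan is to prove both equalities at once by analysing how the reduced $i$-signature of $\lambda$ transforms under one application of $\tfi$ (respectively $\tei$) and then arguing inductively. First I would observe that, because the cancellation procedure recursively deletes every adjacent pair $-_i+_i$, the reduced $i$-signature of $\lambda$ is always a word of the form $+_i^{a}\,-_i^{b}$ (all pluses followed by all minuses), where by definition $a$ is the number of $i$-co-normal nodes and $b$ the number of $i$-normal nodes of $\lambda$.

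The central claim I would establish is that whenever $\lambda$ has an $i$-co-good node, the reduced $i$-signature of $\tfi(\lambda)$ equals $+_i^{a-1}\,-_i^{b+1}$. Granting this, an induction on $j$ shows that $\tfi^{j}(\lambda)$ has $a-j$ co-normal and $b+j$ normal nodes for all $0\leq j\leq a$, and $\tfi^{a+1}(\lambda)=0$ because the reduced signature of $\tfi^{a}(\lambda)$ no longer contains a $+_i$ and hence no $i$-co-good node. This yields $\varphi_i(\lambda)=a$, as desired. The argument for $\vareps_i$ is entirely symmetric: $\tei$ removes the $i$-good node, which corresponds to flipping the highest unmatched $-_i$ in the signature to $+_i$, producing a reduced signature of the form $+_i^{a+1}\,-_i^{b-1}$, and iterating gives $\vareps_i(\lambda)=b$.

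To establish the central claim I would combine a local observation with a stack-based analysis of the matching. Locally, adding the $i$-co-good node $x$ only affects addability and removability at $x$ itself and at its immediate neighbours in the Young diagram, which have residues $i\pm 1\not\equiv i\pmod e$. Hence the $i$-signature of $\tfi(\lambda)$ differs from that of $\lambda$ only at the single position corresponding to $x$, where the symbol $+_i$ switches to $-_i$. For the matching, I would scan the $i$-signature from highest to lowest node and run the standard stack algorithm: each $-_i$ is pushed onto a stack, each $+_i$ pops the top of the stack if non-empty (producing a cancelled pair) and is otherwise recorded as unmatched; the unmatched entries form the reduced signature. By the extremal choice of the $i$-co-good node at position $p$, the stack is empty just before processing position $p$, and from $p+1$ onwards every $+_i$ meets a non-empty stack, for any failure would contradict $p$ being the \emph{lowest} unmatched $+_i$. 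After flipping $p$ from $+_i$ to $-_i$, the two runs of the algorithm agree up to position $p-1$; at $p$ the new run pushes a fresh entry which then sits at the bottom of the stack and is never popped later, since every subsequent $+_i$ finds and pops precisely the same $-_i$ it did originally. Thus the set of unmatched entries changes only by moving $p$ from the $+_i$-side to the $-_i$-side.

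The main technical obstacle is this last bookkeeping: one must verify carefully that the newly pushed symbol at position $p$ is never popped in the modified run, which relies crucially on $p$ being the lowest unmatched $+_i$, so that no $+_i$ after $p$ is ever ``hungry'' for an extra $-_i$. Once this is in place, the two claims of the lemma follow at once from the inductive reading of the reduced signature under iterated application of $\tfi$ and $\tei$.
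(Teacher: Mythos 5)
The paper states this lemma as ``well-known'' and supplies no proof, so there is no argument in the paper to compare against. Your proof is correct and self-contained, and it is the standard one for this kind of signature/matching statement. The three load-bearing points are all handled properly: the reduced $i$-signature indeed has the form $+_i^a-_i^b$ because after cancellation no $-_i$ can precede a $+_i$; adding or removing the extremal $i$-node changes the $i$-signature at exactly one position, since the only other nodes whose (co)addability can change are the four orthogonal neighbours of $x$, which have residues $i\pm 1\not\equiv i\pmod e$ (this holds even for $e=2$ and for $e=\infty$); and the stack bookkeeping that the freshly pushed $-_i$ at position $p$ is never popped is correctly reduced to the minimality of $p$ among unmatched $+_i$'s, which guarantees every later $+_i$ already finds a strictly positive original stack. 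The symmetric argument for $\tei$, using that the $i$-good node is the \emph{highest} unmatched $-_i$ and hence sits at the bottom of the final stack with the stack empty just before it, is equally sound, and the two inductions deliver $\varphi_i(\lambda)=a$ and $\vareps_i(\lambda)=b$ as claimed.
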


The following is an easy but key observation on co-normal nodes.
\begin{Lemma}\label{Lemma_MoreConormalThanNormal}
 Let $\lambda\vdash_r n$. Then $\lambda$ has exactly $r$ more addable than removable nodes.
 Hence, the number of co-normal nodes of $\lambda$ is exactly $r$ larger than the number of normal nodes.\\
 By Lemma \ref{Lemma_NUmberOfBoxesIsVarphi}, this is equivalent to saying that $\sum_i \left(\varphi_i(\lambda)-\vareps_i(\lambda)\right)=r$.
\end{Lemma}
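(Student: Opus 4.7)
The plan is to reduce to the case $r=1$ and then count via the row lengths of a partition, after which the assertion about co-normal and normal nodes follows from an invariance under the cancellation procedure.

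First I would observe that addable/removable nodes of a multipartition $\lambda=(\lambda^{(1)},\dots,\lambda^{(r)})$ decompose as the disjoint union, over the components $c$, of the addable/removable nodes of $\lambda^{(c)}$ (each viewed as a one-component multipartition with the appropriate value of $s_c$ attached, but of course that plays no role in counting). It therefore suffices to show that a single partition $\alpha=(\alpha_1,\dots,\alpha_\ell)$ has exactly one more addable than removable node. Adopt the convention $\alpha_0:=+\infty$ and $\alpha_{\ell+1}:=0$. Then the addable nodes of $\alpha$ are precisely the positions $(i,\alpha_i+1)$ for $1\le i\le \ell+1$ with $\alpha_{i-1}>\alpha_i$, and the removable nodes of $\alpha$ are precisely the positions $(i,\alpha_i)$ for $1\le i\le\ell$ with $\alpha_i>\alpha_{i+1}$. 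Letting $D:=\{1\le i\le\ell+1\mid \alpha_{i-1}>\alpha_i\}$ and reindexing, the number of removable nodes equals $|D\setminus\{1\}|$, while the number of addable nodes equals $|D|$. Since $\alpha_0=+\infty>\alpha_1$, the index $1$ always lies in $D$, and the difference is therefore $1$. This handles the first claim, yielding the bound of $r$ in the multipartition case.

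For the second claim, fix $0\le i\le e-1$ and consider the $i$-signature of $\lambda$, which contains $a_i$ symbols $+_i$ (the addable $i$-nodes) and $b_i$ symbols $-_i$ (the removable $i$-nodes). Each cancellation step deletes one symbol of each sign, so if $c_i$ denotes the total number of cancellations, then the reduced $i$-signature $w_{e,\s}(\lambda)$ contains $a_i-c_i$ symbols $+_i$ and $b_i-c_i$ symbols $-_i$. By definition, these are precisely the $i$-co-normal and $i$-normal nodes of $\lambda$, so the difference between the number of $i$-co-normal and $i$-normal nodes equals $a_i-b_i$, independent of $c_i$. Summing over $i$ gives
\[
\sum_{i=0}^{e-1}\bigl(\#\{i\text{-co-normal}\}-\#\{i\text{-normal}\}\bigr)=\sum_{i=0}^{e-1}(a_i-b_i)=(\text{addable of }\lambda)-(\text{removable of }\lambda)=r,
\]
which, combined with Lemma \ref{Lemma_NUmberOfBoxesIsVarphi}, is the asserted identity $\sum_i(\varphi_i(\lambda)-\vareps_i(\lambda))=r$.

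There is no substantial obstacle: the only non-formal step is the bijective description of addable and removable nodes of a single partition, and I expect the cleanest presentation to be the descent-counting argument sketched above rather than an appeal to the geometry of the rim; the rest is bookkeeping.
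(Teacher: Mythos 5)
Your proof is correct and follows essentially the same two-step strategy as the paper: establish that each component contributes exactly one more addable than removable node, then note that the cancellation procedure preserves the difference between the number of $+$'s and $-$'s. The only cosmetic difference is that the paper proves the per-component count by pairing each removable node with the addable node one row below it and observing a single unmatched addable node in the top row, whereas you count descents; these are equivalent reformulations of the same observation.
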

\begin{proof}
Consider the Young diagram of $\lambda^{(1)}$.
 If $x$ is a removable node of $\lambda^{(1)}$ then there is an addable node in the row directly below $x$ and we can pair of removable and addable nodes in this manner.
 But then we are left with the addable node in the very first row of $\lambda^{(1)}$, which does not have a removable node above it. 
 Hence, $\lambda^{(1)}$ has exactly one more addable than removable node and the same holds true for $\lambda^{(2)}, \dots, \lambda^{(r)}$, so in total $\lambda$ has exactly $r$ more addable than removable nodes.

 Now for co-normal nodes:\\
 Consider all (non-reduced) $i$-signatures over all $0\leq i\leq e-1$  at once. 
 The symbols $-_i$ are in one-to-one correspondence with the removable $i$-nodes and analogously for the symbols $+_i$ and addable $i$-nodes. 
 Thus, the difference between the number of $+$'s and the number of $-$'s, summing over all $i$, is exactly $r$ by what we have just shown.
 In the procedure to compute the reduced $i$-signatures we always remove pairs consisting of one $-$ and one $+$, so this difference remains constant throughout every reduction step.
 Once all $i$-signatures are reduced, this difference is exactly the difference between the number of co-normal and normal boxes of $\lambda$.
\end{proof}

We follow \cite[5.1]{Shan} for the definition of $\sl_e$-categorification in the sense of \cite{Rouquier}.
\begin{definition}
 Set $q:=\exp(2\pi\sqrt{-1}/e)\in \C$.\\
 Let $\CC$ be a $\C$-linear artinian abelian category. 
 For any functor $F:\CC\to \CC$ and any $X\in \End(F)$ we call the generalised eigenspace of $X$ acting on $F$ with eigenvalue $a\in \C$ the \emph{$a$-eigenspace of $X$ in $F$}.\\
 Then an \emph{$\sl_e$-categorification on $\CC$} consists of 
 \begin{enumerate}
  \item an adjoint pair $(U,V)$ of exact functors $\CC\to \CC$,
  \item $X\in \End(U)$ and $T\in \End(U^2)$, and
  \item a decomposition $\CC=\oplus_{\lambda \in P}\CC_{\lambda}$,
  \end{enumerate}
  satisfying the following:
  Set $U_i$ (resp. $V_i$) to be the $q^i$-eigenspace of $X$ in U (resp. in $V$) for $0\leq i\leq e-1$. 
  Then 
  \begin{enumerate}[label=\roman*)]
    \item it is $U=\oplus_{i=0}^{e-1}U_i$,
    \item the endomorphisms $X$ and $T$ satisfy the relations 
    \begin{gather*}
     (1_UT)\circ (T1_U)\circ(1_UT)=(T1_U)\circ(1_UT)\circ(T1_U),\\
     (T+1_{U^2})\circ T-q1_{U^2}=0,\\
     T\circ(1_UX)\circ T=qX1_U,
    \end{gather*}
    \item the map $U_i\mapsto e_i$ and $V_i\mapsto f_i$ for $0\leq i\leq e-1$ defines an integrable representation of $\sl_e$ on the complexification $K_0(\CC):=\C\otimes_{\Z}R_0(\CC)$ of the Grothendieck group,
    \item $U_i(\CC_\lambda)\subseteq \CC_{\lambda+\alpha_i}$ and $V_i(\CC_\lambda)\subseteq \CC_{\lambda-\alpha_i}$, where $\alpha_i$ is the $i$'th simple root of $\sl_e$,
    \item $V$ is isomorphic to a left adjoint of $U$.
 \end{enumerate}
\end{definition}

We fix a $\C$-linear artinian abelian category $\CC$ possessing an $\sl_e$-categorification afforded by an adjoint pair of functors $(U, V)$ and endomorphisms $X$ and $T$.
\begin{Proposition}[ { \cite[Prop 5.20]{ChuangRouquier}, \cite[6.2]{Shan}}]\label{Prop_ChuangRouquier}
Let $0\leq i\leq e-1$. 
Then the data $(U_i, V_i, X, T)$ yields an $\mathfrak{sl}_2$-categorification on $\CC$ in the sense of \cite[Sec 5]{ChuangRouquier}.\\
For an object $M\in \CC$ set $\widetilde{U_i}(M):=\soc(U_i(M))$ and $\widetilde{V_i}(M):=\head(V_i(M))$.
 If $S$ is simple, then $\widetilde{U_i}(S)$ is either simple or $0$. 
 Similarly, $\widetilde{V_i}(S)$ is either $0$ or simple.
 Moreover, if $\widetilde{V_i}(S)\neq 0$, then the multiplicity of $\widetilde{V_i}(S)$ in $V_i(S)$ is exactly 
 $\max_{j}\{j\geq 0\mid \widetilde{V_i}^j(S)\neq 0\}$. 
\end{Proposition}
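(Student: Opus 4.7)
The plan is to reduce the statement to established results on $\mathfrak{sl}_2$-categorifications, since the proposition is essentially the $\mathfrak{sl}_2$-slice of the given $\sl_e$-categorification. First, I would isolate, for each fixed $0\leq i\leq e-1$, the data $(U_i, V_i, X|_{U_i}, T|_{U_i^2})$. The decomposition in (i) directly provides the adjoint pair $(U_i, V_i)$ (using (v) so that adjointness persists on each eigenspace). The endomorphism $X$ restricts to $\End(U_i)$ because $U_i$ is by definition a generalised $X$-eigenspace. For $T$ one needs to observe that $U^2 = \bigoplus_{j,k} U_j U_k$ and that the third relation $T\circ(1_U X)\circ T = qX 1_U$ forces $T$ to preserve the splitting in a way compatible with each factor $U_i^2$, so that $T$ restricts to an endomorphism of $U_i^2$ satisfying the three relations of (ii) on $U_i$.

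Second, the $\mathfrak{sl}_2$-action required on $K_0(\CC)$ is obtained by restricting the integrable $\sl_e$-action of (iii) to the subalgebra generated by $e_i, f_i, h_i$, which is a standard copy of $\mathfrak{sl}_2$; integrability of this restriction is immediate from integrability of the ambient representation. The block-compatibility condition in (iv) restricts to the $\alpha_i$-shift, which is the correct grading for the $\mathfrak{sl}_2$-categorification. At this point, all the axioms of \cite[Sec 5]{ChuangRouquier} are verified for $(U_i, V_i, X, T)$, which proves the first assertion.

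Third, once the $\mathfrak{sl}_2$-categorification structure is in place, the statements on $\widetilde{U_i}(S) = \soc(U_i(S))$ and $\widetilde{V_i}(S) = \head(V_i(S))$ follow by direct citation of \cite[Prop 5.20]{ChuangRouquier}: for any simple $S$, the socle of $U_i(S)$ is either zero or simple, and dually (via the adjunction in (v)) the head of $V_i(S)$ is either zero or simple. The multiplicity formula $[V_i(S) : \widetilde{V_i}(S)] = \max\{j\geq 0 \mid \widetilde{V_i}^j(S)\neq 0\}$ is also part of the Chuang-Rouquier structure theorem describing the $V$-string through $S$, and can be read off from the same reference, exactly as in \cite[6.2]{Shan}.

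The main obstacle is the verification that $T$ genuinely restricts to $\End(U_i^2)$, that is, that the eigenspace decomposition of $X$ on $U$ is compatible in the right way with the eigenspace decomposition of $X\otimes 1$ and $1\otimes X$ on $U^2$ under $T$. This is where the relation $T\circ (1_U X)\circ T = qX 1_U$ is essential, and is the only point requiring genuine (but well-known and routine) computation; everything else is either formal restriction or a direct appeal to \cite{ChuangRouquier} and \cite{Shan}.
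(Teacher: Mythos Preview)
Your outline is correct and matches what the paper does: the paper gives no proof of its own here but simply cites \cite[Prop~5.20]{ChuangRouquier} and \cite[6.2]{Shan}, and your proposal is precisely a sketch of how those references establish the result. The one technical verification you flag---that $T$ restricts to $\End(U_i^2)$---is indeed the only non-formal step, and it is handled in the cited works (or one can bypass it by noting that Chuang--Rouquier's axioms do not require $T$ to preserve $U_i^2$ on the nose but only that the resulting affine Hecke relations hold on $U^2$, which already suffices to run their arguments eigenspace by eigenspace).
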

\begin{Proposition}[ {\cite[Prop 6.2]{Shan}}]\label{Prop_Grothendieckgrouphasperfectbasis}
Let $\irr(\CC)$ be the set of simple objects of $\CC$ up to isomorphism.
  The triple \[(\left\{[S]\mid S\in \irr(\CC)\right\}, \ \{\widetilde{U_i}\mid 0\leq i\leq e-1\}, \ \{\widetilde{V_i}\mid 0\leq i\leq e-1\})\] is a perfect basis of $K_0(\CC)$.\\
\end{Proposition}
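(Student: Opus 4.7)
The task is to verify the three conditions in the definition of a perfect basis (introduced before Definition \ref{def_crystalfockspacehighestweight}) applied to $B = \{[S] \mid S\in \irr(\CC)\}$ with $\tEi := \widetilde{U_i}$ and $\tFi := \widetilde{V_i}$. My plan is to dispose of the formal conditions quickly---that $B$ is a $\C$-basis of $K_0(\CC)$ is standard Grothendieck-group theory, and each $[S]$ is a weight vector because $S$ lies in a unique $\CC_\lambda$ and axiom (iv) transports this to a weight for the $\sl_e$-action on $K_0(\CC)$.

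For the biconditional $\widetilde{V_i}([S]) = [S']\iff \widetilde{U_i}([S']) = [S]$, I would invoke axiom (v), which together with the adjoint pair in (1) makes $(U_i,V_i)$ biadjoint. A surjection $V_i(S)\twoheadrightarrow S'$ transposes to a nonzero map $S\to U_i(S')$, and because $S$ is simple it must factor through $\soc U_i(S') = \widetilde{U_i}(S')$, which is simple by Proposition \ref{Prop_ChuangRouquier}; hence $\widetilde{U_i}(S') = S$. The converse is symmetric. The nonvanishing condition is immediate from $e_i[S] = [U_i(S)]$ and the observation that a nonzero finite-length object has nonzero socle.

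The real work is the leading-term condition, and this is where I expect the main obstacle. Set $T := \widetilde{U_i}(S)$ and $\ell := \ell_i([S])$. I would first identify $\ell$ with $\max\{j \mid \widetilde{U_i}^j(S)\neq 0\}$: iterating $\widetilde{U_i}(\cdot)\hookrightarrow U_i(\cdot)$ together with exactness of $U_i$ gives inclusions $\widetilde{U_i}^j(S)\hookrightarrow U_i^j(S)$ and hence one inequality, while the reverse follows from the integrability of the induced $\mathfrak{sl}_2$-action on $K_0(\CC)$ recorded in Proposition \ref{Prop_ChuangRouquier}. Expanding $e_i[S] = [U_i(S)] = m_T[T] + \sum_{S'\not\cong T} m_{S'}[S']$ in the basis, the inclusion $T\hookrightarrow U_i(S)$ gives $m_T \geq 1$, so the contribution $m_T[T]$ lies in $\C^* \widetilde{U_i}([S])$. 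It then remains to show that every other composition factor $S'$ of $U_i(S)$ satisfies $\ell_i([S']) < \ell - 1$, so that $\sum_{S'\not\cong T} m_{S'}[S'] \in V_i^{<\ell-1}$; this is the statement that $T$ is the unique composition factor of $U_i(S)$ of maximal $e_i$-string length $\ell - 1$, and is the genuine nonformal input, extracted from Chuang--Rouquier's structure theorem for simples under an $\mathfrak{sl}_2$-categorification in \cite[Sec.~5]{ChuangRouquier}. Everything else is a formal consequence of the categorification axioms and the definitions of head and socle.
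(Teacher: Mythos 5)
The paper gives no proof here; it is a direct citation of \cite[Prop 6.2]{Shan}, so there is no argument in the text for me to compare against. Evaluating your sketch on its own terms: the formal steps --- basis, weight vectors via axiom (iv), the biconditional via biadjunction together with the simplicity of socle and head from Proposition~\ref{Prop_ChuangRouquier}, nonvanishing via the artinian hypothesis --- are all sound, and you correctly locate the nonformal content in the leading-term condition.

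There is, however, a genuine gap in how you try to split off the easy part of that condition. You claim the reverse inequality $\ell_i([S]) \le \max\{j \mid \widetilde{U_i}^j(S)\ne 0\}$ ``follows from the integrability of the induced $\mathfrak{sl}_2$-action.'' It does not. Integrability constrains weight strings in $K_0(\CC)$ but is silent on whether $\soc U_i(S)$ is the composition factor that keeps surviving under further applications of $U_i$; a priori $U_i$ could annihilate $\soc U_i(S)$ while remaining nonzero on some other constituent of $U_i(S)$, in which case $e_i^j[S]$ would outlive $\widetilde{U_i}^j(S)$. Ruling this out is precisely one of the nontrivial structural facts Chuang--Rouquier establish for simple objects under an $\mathfrak{sl}_2$-categorification (the material around their Lemma~4.3 and Proposition~5.20) --- the same circle of results you correctly invoke for the other half of the leading-term estimate, and which Proposition~\ref{Prop_ChuangRouquier} as stated does not record. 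So the nonformal input is larger than you acknowledge: both the identification of $\ell_i([S])$ with the $\widetilde{U_i}$-depth and the bound $\ell_i([S']) < \ell - 1$ for the non-socle constituents must be extracted from \cite[Sec.~5]{ChuangRouquier}. As written, the attempt to get the former from integrability alone would stall.
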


These preliminaries allow us to prove our key result on $\sl_e$-categorification.
\begin{Theorem}\label{Thm_ConstituentsCategorification}
 Suppose the $\sl_e$-module $K_0(\CC)$ is in $\Oint$ and the decomposition into irreducible highest weight modules is $\oplus_{j}L(\lambda_j)$ for $\lambda_j\in P^+$. 
 Denote by $B(\CC)$ the crystal graph associated to its perfect basis from Proposition \ref{Prop_Grothendieckgrouphasperfectbasis} and let 
 \[
  \Psi:B(\CC)\to \oplus_j B_e(\s_{\lambda_j})
 \]
 be a crystal graph isomorphism as in Corollary \ref{Cor_CrystalIsomorphismToDirectSum}.\\
 Let $S\in \CC$ be simple and $\s\in \widetilde{\Z_{\geq 0}^r}$ such that $\Psi([S])\in B_e(\s)$.  \\
 Then $V(S)$ has at least $r$ constituents. 
\end{Theorem}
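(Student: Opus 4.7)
The plan is to reduce the claim to a purely combinatorial count via the crystal isomorphism $\Psi$. First I would set $\lambda := \Psi([S]) \in B_e(\s)$, so $\lambda$ is an $r$-multipartition. Since the $\sl_e$-categorification axiom (i) gives a decomposition $V = \bigoplus_{i=0}^{e-1} V_i$ of functors (the $V_i$ being the $q^i$-eigenspaces of $X$), one has $V(S) = \bigoplus_{i=0}^{e-1} V_i(S)$, and the composition length of $V(S)$ equals the sum of the composition lengths of its summands. It therefore suffices to prove $\sum_i \ell(V_i(S)) \geq r$, where $\ell$ denotes composition length.

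Next I would apply Proposition \ref{Prop_ChuangRouquier} to each $V_i(S)$ separately. Whenever $\widetilde{V_i}(S) \neq 0$, it is a simple object and its multiplicity as a composition factor of $V_i(S)$ is equal to $\max\{j \geq 0 \mid \widetilde{V_i}^{\,j}(S) \neq 0\}$. The key translation step is this: since $\Psi$ is a crystal graph isomorphism, the operators $\widetilde{V_i}$ on the perfect basis $\{[T] \mid T \in \irr(\CC)\}$ from Proposition \ref{Prop_Grothendieckgrouphasperfectbasis} are intertwined with the operators $\tfi$ on multipartitions, so this maximum equals $\varphi_i(\lambda)$ as defined in Definition \ref{def_crystalfockspacehighestweight}. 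In the degenerate case $\widetilde{V_i}(S) = 0$ one has $\tfi(\lambda) = 0$ and hence $\varphi_i(\lambda) = 0$, so $\ell(V_i(S)) \geq \varphi_i(\lambda)$ holds trivially. In either case the multiplicity bound yields $\ell(V_i(S)) \geq \varphi_i(\lambda)$.

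Summing over $i$ then gives $\ell(V(S)) \geq \sum_{i=0}^{e-1} \varphi_i(\lambda)$, and at this point Lemma \ref{Lemma_MoreConormalThanNormal} closes the argument: it states that $\sum_i \bigl(\varphi_i(\lambda) - \vareps_i(\lambda)\bigr) = r$, and since each $\vareps_i(\lambda) \geq 0$ this forces $\sum_i \varphi_i(\lambda) \geq r$, as required.

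The main conceptual step is the translation, through $\Psi$, of the representation-theoretic head multiplicities furnished by Proposition \ref{Prop_ChuangRouquier} into the combinatorial invariants $\varphi_i(\lambda)$; once that bridge is in place, the uniform lower bound falls out of the essentially elementary Lemma \ref{Lemma_MoreConormalThanNormal}, which is really the heart of the theorem. I don't anticipate any serious technical obstacle, since every nontrivial ingredient has already been collected in the preparatory material; the only degenerate point to note is the case $r = 0$, where the statement only asserts "at least $0$ constituents" and is vacuous.
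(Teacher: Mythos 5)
Your proof is correct and follows essentially the same route as the paper's: decompose $V$ as $\bigoplus_i V_i$, apply Proposition~\ref{Prop_ChuangRouquier} to bound $\ell(V_i(S))$ below by $\max\{j : \widetilde{V_i}^{\,j}(S)\neq 0\}$, translate these via the crystal isomorphism $\Psi$ into the $\varphi_i(\lambda)$, and finish with Lemma~\ref{Lemma_MoreConormalThanNormal}. You merely spell out the translation step and the degenerate case $\widetilde{V_i}(S)=0$ more explicitly than the paper does.
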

\begin{proof}
 Let $\ell_i(S):=\max_{j}\{j\geq 0\mid \widetilde{V_i}^j(S)\neq 0\}$ for $0\leq i\leq e-1$. 
 Since $V=\oplus_i V_i$, we see that $V(S)$ has at least $\sum_i \ell_i(S)$ constituents by Proposition \ref{Prop_ChuangRouquier}. 
 As $\Psi$ is a crystal isomorphism and $\Psi([S])$ is in $B_e(\s)$, it follows from Lemma \ref{Lemma_MoreConormalThanNormal} that $\sum_i\ell_i(S)=\sum_{i}\varphi_i(\Psi([S]))$ is at least $r$.
\end{proof}
A completely analogous argument can be applied if we have a decomposition into Fock spaces.
\begin{Corollary}\label{Cor_CountingOnFockSpace}
Suppose the $\sl_e$-module $K_0(\CC)$ is in $\Oint$ and it has a decomposition into Fock spaces $K_0(\CC)\cong \oplus_{\mathbf{t}}\mathcal{F}(\Lambda_{\mathbf{t}})$ for $\mathbf{t}\in \cup_{r\geq 1}~\widetilde{\Z_{\geq 0}^r}$. 
 Denote by $B(\CC)$ the crystal graph associated to its perfect basis from Proposition \ref{Prop_Grothendieckgrouphasperfectbasis} and let 
 \[
  \Psi:B(\CC)\to \oplus_{\mathbf{t}} B_e(\mathcal{F}(\Lambda_{\mathbf{t}}))
 \] be a crystal graph isomorphism.\\
 Let $S\in \CC$ be simple, $r\geq 1$ and $\mathbf{t}\in \widetilde{\Z_{\geq 0}^r}$ such that $\Psi([S])\in B_e(\mathcal{F}(\Lambda_{\mathbf{t}}))$.  \\
 Then $V(S)$ has at least $r$ constituents.
\end{Corollary}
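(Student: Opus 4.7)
The plan is to imitate the proof of Theorem \ref{Thm_ConstituentsCategorification} essentially verbatim, the only change being that $\Psi([S])$ lands in the Fock space crystal $B_e(\mathcal{F}(\Lambda_{\mathbf{t}}))$ instead of $B_e(\s)$. The crucial observation is that Lemma \ref{Lemma_MoreConormalThanNormal} is stated for an arbitrary $r$-multipartition $\lambda\vdash_r n$, not just for Kleshchev ones, so its conclusion applies to every vertex of $B_e(\mathcal{F}(\Lambda_{\mathbf{t}}))$.

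First, for each $0\leq i\leq e-1$ set $\ell_i(S):=\max\{j\geq 0\mid \widetilde{V_i}^j(S)\neq 0\}$. Applying Proposition \ref{Prop_ChuangRouquier} to the $\mathfrak{sl}_2$-categorification $(U_i,V_i,X,T)$, the head $\widetilde{V_i}(S)$ is either zero or a simple object appearing in $V_i(S)$ with multiplicity exactly $\ell_i(S)$. In particular $V_i(S)$ has at least $\ell_i(S)$ constituents, and since $V=\bigoplus_i V_i$ decomposes as a direct sum, $V(S)$ has at least $\sum_i \ell_i(S)$ constituents.

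Next I translate this count through the crystal graph isomorphism. By Proposition \ref{Prop_Grothendieckgrouphasperfectbasis} the operators $\widetilde{V_i}$ on $\{[S]\mid S\in\irr(\CC)\}$ are precisely the crystal operators $\tfi$ defining the edges of $B(\CC)$, so $\ell_i(S)=\varphi_i(\Psi([S]))$ where $\varphi_i$ is taken in $B_e(\mathcal{F}(\Lambda_{\mathbf{t}}))$. Because $\Psi([S])$ lies in $B_e(\mathcal{F}(\Lambda_{\mathbf{t}}))$ for some $\mathbf{t}\in\widetilde{\Z_{\geq 0}^r}$, it is an $r$-multipartition, and Lemma \ref{Lemma_MoreConormalThanNormal} yields
\[
\sum_{i=0}^{e-1}\bigl(\varphi_i(\Psi([S]))-\vareps_i(\Psi([S]))\bigr)=r,
\]
whence $\sum_i \ell_i(S)=\sum_i \varphi_i(\Psi([S]))\geq r$, finishing the proof.

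There is essentially no obstacle here: the entire argument of Theorem \ref{Thm_ConstituentsCategorification} is transported without modification. The only thing to verify is that the combinatorial input (Lemma \ref{Lemma_MoreConormalThanNormal}) was stated in enough generality to cover arbitrary $r$-multipartitions and not only vertices of $B_e(\s)$; an inspection of its statement and proof confirms this, so the corollary follows at once.
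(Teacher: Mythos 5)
Your proof is correct and is precisely the ``completely analogous argument'' the paper invokes without spelling out: you transport the proof of Theorem \ref{Thm_ConstituentsCategorification}, and you correctly identify that the only point to check is that Lemma \ref{Lemma_MoreConormalThanNormal} is stated for arbitrary $r$-multipartitions, so it applies to all vertices of $B_e(\mathcal{F}(\Lambda_{\mathbf{t}}))$, not just the Kleshchev ones in $B_e(\s)$.
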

\begin{remark}
 Note that by Corollary \ref{Cor_GraphHighestWeightMod} we can slightly weaken the hypothesis of Theorem \ref{Thm_ConstituentsCategorification} and Corollary \ref{Cor_CountingOnFockSpace}:
 Their statements still hold if the isomorphism $K_0(\CC)\cong \oplus_{j}L(\lambda_j)$ for $\lambda_j\in P^+$ (respectively $K_0(\CC)\cong \oplus_{\mathbf{t}}\mathcal{F}(\Lambda_{\mathbf{t}})$) is only an isomorphism of ${\sl_e}^\prime$-modules.
\end{remark}

\subsection{Rational cyclotomic Cherednik algebras}
\label{Section_Cherednik}
Fix integers $r\geq 1 $ and $e\geq 2$ and let $\mathbf{t}=(t_1,\dots, t_r)\in \Z^r$. 
Then for every non-negative integer $n$ we can define a \emph{cyclotomic rational Cherednik algebra (or cyclotomic rational double affine Hecke algebra)} $\HH_{n,\mathbf{t}, e}$:
It is a quotient of the smash product of the complex group algebra $\C[W]$, where $W$ is a complex reflection group of type $G(r,1,n)$, with the tensor algebra of $N\oplus N^*$, where $N$ is the $n$-dimensional vector space on which $W$ acts naturally, cf. \cite[3.1]{Shan} for a precise definition.\\
For every such rational Cherednik algebra there exists a module category $\O_{n,\mathbf{t}, e}$ consisting of all $\HH_{n, \mathbf{t}, e}$-modules that are finitely generated and acted locally nilpotently on by a certain subalgebra of $\HH_{n, \mathbf{t}, e}$.\\
In the following we set $\HH_n:=\HH_{n,\mathbf{t}, e}$ and $\O_n:=\O_{n,  \mathbf{t}, e}$.\\
Bezrukavnikov and Etingof defined parabolic induction and restriction functors $\O_n\to \O_{n+1}$ and $\O_n\to \O_{n-1}$ (cf. \cite{BezruEtingof}) which we will denote by $\ind_n$ and $\res_n$, respectively, in the following.
Set $\O:=\oplus_{n\in \Z_{\geq 0}} \O_n$ and 
\[
 \res:=\oplus_{n\geq 0} \res_n \quad \text{  and }  \ind:=\oplus_{n\geq 0} \ind_n.
\]

 \begin{Proposition}[ {\cite[Cor 4.5]{Shan}}]\label{Prop_ShanCrystalIso}
 Category $\O$ possesses an $\sl_e$-categorification for which the pair of adjoint functors is given by $(\res, \ \ind)$ and the $\sl_e$-module $K_0(\O)$ is in category $\Oint$.
 As an $\sl_e$-module, $K_0(\O)$ is isomorphic to the Fock space $\mathcal{F}(\Lambda_{\mathbf{t}})$.
\end{Proposition}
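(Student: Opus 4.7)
The plan is to follow Shan's strategy and verify each ingredient of the definition of $\sl_e$-categorification in turn, using as black boxes the construction of the parabolic induction and restriction functors of Bezrukavnikov--Etingof, the Chuang--Rouquier theory of $\mathfrak{sl}_2$-categorifications, and the theory of standard modules $\Delta(\lambda)$ indexed by $r$-multipartitions $\lambda$ of $n$ that form a complete set of basis classes of $K_0(\O_n)$.

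First, I would recall from Bezrukavnikov--Etingof that $\res_n$ and $\ind_n$ are exact and form a biadjoint pair between $\O_n$ and $\O_{n-1}$; this immediately yields the adjoint pair $(U,V)=(\res,\ind)$ of part (1) of the definition, as well as the left-adjointness requirement~(v). The block decomposition $\CC=\oplus_\lambda \CC_\lambda$ of part~(3) comes from the central characters of the Cherednik algebra, which are parametrised by the weights of $\mathcal{F}(\Lambda_{\mathbf{t}})$; compatibility with the action of $\res_i$ and $\ind_i$ on weight spaces will then be condition~(iv), checked by a direct computation on the standard modules $\Delta(\lambda)$.

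Next I would construct $X\in \End(\res)$ and $T\in \End(\res^2)$. These arise naturally from the monodromy of the KZ connection, or equivalently from the action of the Jucys--Murphy-type element and the braiding on restrictions: inside $\HH_n$ sits a commutative subalgebra whose image under $\res_n$ gives rise to an endomorphism $X$, and the symmetry of $\res^2$ combined with this $X$ produces $T$. The three relations of condition~(ii) are exactly the defining relations of the affine Hecke algebra of $GL$, which are built into the construction. The eigenvalues of $X$ are of the form $q^i$ (with $q=\exp(2\pi\sqrt{-1}/e)$), so decomposing into generalised eigenspaces yields $\res=\oplus_i\res_i$, verifying~(i). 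The hardest part of this step is verifying that the eigenvalues of $X$ really are $e$-th roots of unity — this uses that the parameter of the algebra is fixed to match the chosen $e$, and is the technical heart of Shan's argument.

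Finally, to get condition~(iii) and identify $K_0(\CC)$ with $\mathcal{F}(\Lambda_{\mathbf{t}})$, I would compute the matrices of $\res_i$ and $\ind_i$ on the basis $\{[\Delta(\lambda)]\mid \lambda\in\PP_r\}$. By the branching rule for standard modules (a direct consequence of the Bezrukavnikov--Etingof transitivity of restriction and the identification of the head of $\ind(\Delta(\mu))$ via the KZ functor), $[\res_i\Delta(\lambda)]$ is the sum over $\lambda\setminus\{x\}$ with $x$ an $i$-removable node of $\lambda$, and similarly for $\ind_i$. This exactly matches the $\sl_e$-action on $|\lambda,\mathbf{t}\rangle\in \mathcal{F}(\Lambda_{\mathbf{t}})$ from Definition of the Fock space, so the $\C$-linear isomorphism $[\Delta(\lambda)]\mapsto |\lambda,\mathbf{t}\rangle$ intertwines the two actions. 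Integrability of the resulting $\sl_e$-module and its membership in $\Oint$ then follow from the corresponding properties of $\mathcal{F}(\Lambda_{\mathbf{t}})$: each weight space is spanned by finitely many $[\Delta(\lambda)]$, the weight support lies in $\Lambda_{\mathbf{t}}-\sum \Z_{\geq 0}\alpha_i$, and the $f_i,e_i$ act locally nilpotently because each $\Delta(\lambda)$ has only finitely many addable and removable $i$-nodes. The main obstacle throughout is the construction and analysis of $X$ and $T$ via KZ, which is essentially Shan's core computation; the combinatorial identification with the Fock space is then a routine comparison of actions on the standard basis.
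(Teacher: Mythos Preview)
The paper does not prove this proposition at all: it is stated with a citation to \cite[Cor~4.5]{Shan} and used as a black box. There is therefore no ``paper's own proof'' to compare your proposal against; the paper simply imports the result.

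Your sketch is a reasonable high-level outline of Shan's argument, and nothing in it is obviously wrong as a roadmap. But since the present paper treats the statement as an external input, writing out a proof here is not required and would be out of place; a one-line reference to \cite{Shan} is all that is expected. If your intent is to reproduce Shan's proof for completeness, be aware that several of the steps you describe as routine (the construction of $X$ and $T$, the verification of the affine Hecke relations, and the identification of the eigenvalues of $X$) are the substantive content of \cite{Shan} and would each need considerably more detail to stand as a proof.
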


By Corollary \ref{Cor_CountingOnFockSpace}, this implies the following.
\begin{Theorem}\label{Theo_MainTheoremCherednik}
Let $0\neq M\in \O$. 
Then $\ind(M)$ has at least $r$ constituents. 
In particular, if $r\geq 2$, then $\ind(M)$ is always reducible.
\end{Theorem}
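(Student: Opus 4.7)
The plan is to invoke Corollary \ref{Cor_CountingOnFockSpace} via Proposition \ref{Prop_ShanCrystalIso} for simple objects, and then bootstrap to arbitrary nonzero $M \in \O$ using exactness of $\ind$.

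First I would verify the hypotheses of Corollary \ref{Cor_CountingOnFockSpace}. By Proposition \ref{Prop_ShanCrystalIso}, $\O$ carries an $\sl_e$-categorification with adjoint pair $(\res, \ind)$, so in the notation of the corollary $V = \ind$; moreover $K_0(\O) \in \Oint$ and is isomorphic to the single Fock space $\mathcal{F}(\Lambda_{\mathbf{t}})$. By the remark following the definition of Fock spaces we may permute the entries of $\mathbf{t}$ modulo $e$ without changing $\mathcal{F}(\Lambda_{\mathbf{t}})$, and in particular assume $\mathbf{t} \in \widetilde{\Z_{\geq 0}^r}$. Then for any simple $S \in \O$, a crystal graph isomorphism $\Psi$ sends $[S]$ into $B_e(\mathcal{F}(\Lambda_{\mathbf{t}}))$ (there is only one summand), and Corollary \ref{Cor_CountingOnFockSpace} yields that $\ind(S)$ has at least $r$ constituents.

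To lift this to arbitrary $0 \neq M \in \O$, I would use that $\ind$ is exact (it is part of the categorification datum) and that objects of category $\O$ have finite length. Pick any simple subquotient $S$ of $M$; exactness of $\ind$ makes $\ind(S)$ a subquotient of $\ind(M)$, so $\ind(M)$ has at least as many constituents as $\ind(S)$, namely at least $r$. The ``in particular'' assertion is then immediate, since $r \geq 2$ forces $\ind(M)$ to have at least two composition factors and therefore to be reducible.

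I do not anticipate a serious obstacle: all the substance has been absorbed into Proposition \ref{Prop_ShanCrystalIso} and Corollary \ref{Cor_CountingOnFockSpace}, and what remains is purely bookkeeping, most notably the reduction of $\mathbf{t}$ to $\widetilde{\Z_{\geq 0}^r}$ to match the formal hypotheses of the corollary and the elementary passage from simple objects to objects of finite length via the exactness of $\ind$.
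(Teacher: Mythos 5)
Your proof is correct and follows the same route the paper takes: the paper simply invokes Corollary \ref{Cor_CountingOnFockSpace} via Proposition \ref{Prop_ShanCrystalIso}, leaving the normalisation of $\mathbf{t}$ to $\widetilde{\Z_{\geq 0}^r}$ and the passage from simple objects to arbitrary nonzero objects of finite length implicit. You have merely spelled out these bookkeeping steps explicitly, which is fine.
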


We will later slightly strengthen this result in Remark \ref{Rem_CherednikStrongerVersion}.

\section{Parabolic induction on Ariki-Koike algebras}
\label{chapterariki}
In the following we will use our previous results on crystal graphs to give a new lower bound on the number of constituents of parabolically induced modules of Ariki-Koike algebras. 
After some preliminaries on the object at hand we first reduce the problem to the case of so-called $q$-connected parameters. 
Once this is done, we have to differentiate the cases that the parameter $q$ is either $1$ or not equal to $1$. 
In the second case we can mostly apply our earlier results on $\sl_e$-categorification, whereas for $q=1$ some manual computation yields the corresponding result. 
Note that we treat the case of a generic parameter independently, as we do not have an $\sl_e$-categorification result in this case, but we are still able to use our results on the crystal graphs $B_e(\s)$.\\
We close with the analogous result for the closely related degenerate cyclotomic Hecke algebra.
\subsection{Preliminaries}
Throughout this chapter let $K$ be a field. 
For convenience we assume $K$ to be algebraically closed.

We begin with some preliminaries:\\
Let $n$ and $r$ be positive integers and $q,Q_1,\dots, Q_r$ invertible elements of $K$. 
Then the \emph{Ariki-Koike algebra $\hh_{n,r}(q;Q_1,\dots, Q_r)$} is the unital associative $K$-algebra with generators $T_0,\dots, T_{n-1}$ satisfying relations 
 \begin{align*}
(T_0-Q_1)\cdots(T_0-Q_r)&=0&\\
(T_i-q)(T_i+1)&=0  \quad &\text{ for } 1\leq i \leq n-1\\
T_0T_1T_0T_1&=T_1T_0T_1T_0&\\
 T_iT_{i+1}T_i&=T_{i+1}T_iT_{i+1} &\text{ for } 1\leq i\leq n-2\\
 T_iT_j&=T_jT_i&\text{ for } |i-j|>1.
\end{align*}
This algebras has been defined by Ariki-Koike and, independently, by Brou\'e-Malle, cf. \cite{ArikiKoike, BroueMalle}.
It is obvious that a re-ordering of the $Q_i$ does not change the resulting algebra. 
In the following we fix parameters and set $\hh_n:=\hh_{n,r}(q;Q_1,\dots, Q_r)$.\\
It is well-known that the elements $T_1,\dots T_{n-1}$ generate an Iwahori-Hecke algebra of type $A_{n-1}$ with parameter $q$.
Hence, as usual, for $w\in \sy_n$ we set $T_w:=T_{i_1}\cdots T_{i_k}$ whenever $w=s_{i_1}\cdots s_{i_k}$ is a reduced expression of $w$ in the generators $s_i:=(i,i+1)$.
We define the \emph{Jucys-Murphy elements} $L_i$ inductively by setting $L_1:=T_0$ and $L_{i+1}:=q^{-1}T_iL_iT_i$ for $1\leq i\leq n-1$. 
It has been shown by Ariki and Koike in \cite{ArikiKoike} that 
\[
 \left\{L_1^{a_1}\cdots L_n^{a_n}T_w\mid w\in \sy_n, 0\leq a_1,\dots, a_n\leq r-1\right\}
\]
is a $K$-basis of $\hh_n$.

This implies that $\hh_{n}$ is a subalgebra of $\hh_{n+1}$ and that $\hh_{n+1}$ is free as a left $\hh_n$-module. 
Hence, there exists an exact induction functor
\[
 \ind_n:=\ind_{\hh_n}^{\hh_{n_+1}}:\hh_n\modcat\to \hh_{n+1}\modcat; M\mapsto M\otimes_{\hh_n} \hh_{n+1}.
\]
As this functor is exact it yields a homomorphism of Grothendieck groups 
\[
 R_0(\hh_n)\to R_0(\hh_{n+1}); [M]\mapsto [\ind_n (M)],
\]
where  $[M]$ is the class of the $\hh_n$-module $M$ in the Grothendieck group.
By slight abuse of notation we denote this homomorphism, too, by $\ind_n$.

For every $r$-multipartition $\lambda\vdash_r n$ there exists a well-defined finite dimensional $\hh_n$-module $S^\lambda$ called a \emph{Specht module}, defined in \cite{DJM}.\\
On each Specht module $S^\lambda$ there exists a well-defined bilinear form whose radical $\rad S^\lambda$ is an $\hh_n$-submodule of $S^\lambda$ and we set $D^\lambda:=S^\lambda/\left(\rad S^\lambda\right)$.
These modules fit neatly into the concept of viewing $\hh_n$ as a cellular algebra and in \cite{DJM} it is shown that 
the set 
 \[
  \left\{D^\lambda\mid \lambda\vdash_r n,\ D^\lambda\neq 0\right\}
 \]
is a complete set of pairwise non-isomorphic $\hh_n$-modules.\\
Furthermore, they deduce that the Grothendieck group $R_0(\hh_n)$ is generated by $\left\{[S^\lambda]\mid \lambda\vdash_r n\right\}$.

\subsection{Reduction to \texorpdfstring{$q$}{q}-connected parameter sets}

Many questions on the representation theory of Ariki-Koike algebras have only been covered for so-called $q$-connected parameter sets.
\begin{definition}\label{Def_qconnected}
 Two elements $x$ and $y$ of $K$ are called \emph{$q$-connected} if there exists an integer $k$ such that $x=q^ky$. 
 We write $x\sim_q y$. Clearly, this defines an equivalence relation on $K$.
 We call a set or sequence $X$ with elements in $K$ $q$-connected if all elements of $X$ are $q$-connected.
 Finally, if $X$ and $Y$ are $q$-connected sets (or sequences) over $K$ we say that $X$ and $Y$ are $q$-connected if there exist elements $x\in X$ and $y\in Y$ such that $x$ and $y$ are $q$-connected.
\end{definition}

We set $\QQ:=(Q_1,\dots, Q_r)$.
As reordering of the $Q_i$ does not change the algebra $\hh_n$ we can assume without loss of generality that 
\[
 \QQ=\QQ_1\coprod\dots \coprod \QQ_t, 
\]
for $q$-connected sequences $\QQ_i$ which are pairwise not $q$-connected, where $\coprod$ denotes the concatenation of sequences.
In particular, $t$ is the number of $\sim_q$-equivalence classes on $\QQ$.\\
For $1\leq j\leq t$ define $r_j:=|\QQ_j|$, the length of $\QQ_j$.
Throughout this section we denote by $\otimes$ the tensor product over $K$.\\
\begin{Theorem}[ {\cite[Thm 1.1]{DipperMathas}}]\label{Thm_MoritaEq}
There is a Morita equivalence
\[
 \hh_{n}\thicksim_{\text{Morita}}\hh_{n}^t:=\bigoplus_{\substack{0\leq n_1,\dots,n_t\leq n,\\\sum_i n_i=n}} {^1\hh_{n_1}\otimes \dots \otimes {^t\hh_{n_t}}},
\]
where $^j\hh_{m}:=\hh_{m,\ r_j}(q, \QQ_j)$ for $0\leq m\leq n$ and $1\leq j\leq t$.
In particular, there is an exact functor 
\[
 F_n: \hh_{n}\modcat\to \hh_n^t\modcat.
\]
\end{Theorem}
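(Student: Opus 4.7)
The plan is to realise the Morita equivalence through a full idempotent $e\in\hh_n$ built from the Jucys--Murphy elements and a Chinese Remainder splitting of the cyclotomic relation; the functor $F_n$ is then given by $F_n(M):=eM$. Since the sequences $\QQ_1,\dots,\QQ_t$ lie in pairwise distinct $\sim_q$-classes, the polynomials $P_j(x):=\prod_{Q\in\QQ_j}(x-Q)$ in the factorisation $\prod_{i=1}^r(x-Q_i)=\prod_{j=1}^t P_j(x)$ are pairwise coprime in $K[x]$. By the Chinese Remainder Theorem in $K[x]/\bigl(\prod_i(x-Q_i)\bigr)$ there exist $u_1,\dots,u_t\in K[x]$ with $u_j u_{j'}\equiv\delta_{j,j'}u_j$ and $\sum_j u_j\equiv 1$ modulo $\prod_i(x-Q_i)$. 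Evaluating at each Jucys--Murphy element $L_k$ and using that the $L_k$ pairwise commute with $\prod_i(L_k-Q_i)=0$, I obtain a complete family of pairwise orthogonal idempotents $e_f:=\prod_{k=1}^n u_{f(k)}(L_k)$ indexed by maps $f\colon\{1,\dots,n\}\to\{1,\dots,t\}$, summing to $1$.

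For a composition $\mathbf{n}=(n_1,\dots,n_t)$ of $n$ let $f_\mathbf{n}$ be the standard function whose fibre $f_\mathbf{n}^{-1}(j)$ is the $j$-th consecutive block of length $n_j$, and set $e_\mathbf{n}:=e_{f_\mathbf{n}}$. The centrepiece of the proof is the corner-algebra isomorphism
\[
 e_\mathbf{n}\,\hh_n\,e_\mathbf{n}\;\cong\;{^1\hh_{n_1}}\otimes\dots\otimes{^t\hh_{n_t}}.
\]
Within each block of indices, the braid generators $T_k$ together with $L_{n_1+\dots+n_{j-1}+1}$ restrict to standard generators of ${^j\hh_{n_j}}$, with the cyclotomic relation $P_j=0$ enforced by the factor $u_j\bigl(L_{n_1+\dots+n_{j-1}+1}\bigr)$ appearing in $e_\mathbf{n}$. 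The tensor-product splitting hinges on the vanishing $e_\mathbf{n} T_k e_\mathbf{n}=0$ whenever $k$ and $k+1$ lie in different blocks; this is the main obstacle and the precise point where $q$-disconnectedness is essential, since the relation $T_k L_k T_k=qL_{k+1}$ links the spectrum of $L_k$ to that of $L_{k+1}$ by a factor of $q$, and the hypothesis that distinct $\QQ_j$ live in distinct $q$-orbits forces $u_j(L_k)T_k u_{j'}(L_{k+1})=0$ for $j\neq j'$.

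Finally I set $e:=\sum_\mathbf{n}e_\mathbf{n}$, summing over compositions $\mathbf{n}$ of $n$ with $t$ non-negative parts. Since $\sum_f e_f=1$, the Morita condition $\hh_n e\hh_n=\hh_n$ reduces to showing that every $e_f$ of shape $\mathbf{n}$ lies in the two-sided ideal $\hh_n e_\mathbf{n}\hh_n$, which is achieved by conjugating $e_{f_\mathbf{n}}$ with an appropriate braid element $T_w$, $w\in\sy_n$, realising $f=f_\mathbf{n}\circ w^{-1}$. Orthogonality of the $e_\mathbf{n}$ for distinct $\mathbf{n}$ then yields $e\hh_n e\cong\bigoplus_\mathbf{n} e_\mathbf{n}\hh_n e_\mathbf{n}\cong\hh_n^t$, so $e\hh_n$ is a progenerator and $F_n\colon M\mapsto eM$ implements the desired Morita equivalence, with exactness automatic from multiplication by an idempotent.
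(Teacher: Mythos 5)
The paper does not prove this theorem: it cites it directly from Dipper and Mathas, so there is no in-paper argument to compare yours against. Your sketch pursues a reasonable strategy (a full idempotent built from Jucys--Murphy elements, with $q$-disconnectedness providing the separation), but it rests on a false relation. You assert that ``the $L_k$ pairwise commute with $\prod_i(L_k-Q_i)=0$''; the commutativity is correct, but the relation $\prod_i(L_k-Q_i)=0$ holds only for $k=1$, since $L_1=T_0$ is the only Jucys--Murphy element subject to the cyclotomic relation. Already for $n=2$, $r=1$, $q\neq 1$ one finds $L_2=Q_1(1-q^{-1})T_1+Q_1$ with eigenvalues $qQ_1$ and $q^{-1}Q_1$, so $L_2-Q_1\neq 0$; in general the spectrum of $L_k$ lies in $\bigcup_j q^{\Z}\QQ_j$ and not in $\{Q_1,\dots,Q_r\}$. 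Your Chinese Remainder polynomials $u_j$ satisfy $u_ju_{j'}\equiv\delta_{j,j'}u_j$ only modulo $\prod_i(x-Q_i)$, a polynomial which $L_k$ does not annihilate for $k\geq 2$, so the $u_j(L_k)$ need not be idempotents, the $e_f$ need not be orthogonal or sum to $1$, and everything downstream (the corner-algebra isomorphism, the direct sum over shapes, the fullness of $e$) is built on a decomposition that does not exist. The remedy is to replace the $u_j(L_k)$ by the generalised eigenspace projections of each $L_k$ on its actual, finite spectrum, grouped by $\sim_q$-class; the hypothesis that $\QQ_1,\dots,\QQ_t$ are pairwise $q$-disconnected is precisely what makes this grouping unambiguous and stable.

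Even with corrected idempotents, two further steps need more than you give them. Orthogonality $e_{\mathbf{n}}e_{\mathbf{m}}=0$ does not by itself give $e_{\mathbf{n}}\hh_n e_{\mathbf{m}}=0$; you must show that the shape $\mathbf{n}$ is a block invariant, which again uses $q$-disconnectedness to rule out the $T_w$ carrying an eigenvalue sequence from one shape to another. And $T_we_{f_{\mathbf{n}}}T_w^{-1}$ is not literally $e_{f_{\mathbf{n}}\circ w^{-1}}$: the relations $T_kL_kT_k=qL_{k+1}$ and $(T_k-q)(T_k+1)=0$ introduce lower-order correction terms, so establishing $\hh_ne\hh_n=\hh_n$ requires a genuine intertwiner argument rather than bare conjugation by braid elements.
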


\begin{remark}
 As the functor $F_n$ is exact it induces a homomorphism on the corresponding Grothendieck groups.
 By abuse of notation we will denote it, too, by $F_n$.\\
 Note that the Grothendieck group of $\hh_n^t$ is the direct sum of the Grothendieck groups of the algebras $^1\hh_{n_+1}\otimes\dots\otimes {^t}\hh_{n_t}$. 
 The irreducible modules of $^1\hh_{n_+1}\otimes\dots\otimes {^t}\hh_{n_t}$ are exactly the tensor products of irreducible modules of $^1\hh_{n_1},  \dots, {^t}\hh_{n_t}$.
\end{remark}

To study the functor $F_n$ we first consider its images on Specht modules. 
\begin{Proposition}[ {\cite[Prop 4.11]{DipperMathas}}]\label{MoritaOnSpecht}
 Let $\lambda=(\lambda^{(1)},\dots,\lambda^{(r)})$ be an $r$-multipartition of $n$. Then define $^1\lambda$ to be the $r_1$-multipartition consisting of the first $r_1$ components of $\lambda$, i.e. 
 $^1\lambda:=(\lambda^{(1)},\dots, \lambda^{(r_1)})$.
 Then let $^2\lambda$ be the $r_2$-multipartition consisting of the next $r_2$ components of $\lambda$, i.e. $
 ^2\lambda=(\lambda^{(r_1+1)}, \dots, \lambda^{(r_1+r_2)})$,etc.
 In the end we have $\lambda={ ^1}\lambda\coprod\dots\coprod {^t}\lambda$. \\
 Then for the Specht module $S^\lambda$ it is
 \[
  F_n(S^\lambda)\cong S^{^1\lambda}\otimes\dots\otimes S^{^t\lambda},
 \]
 which is an $\hh_{n}^s$-module on which nearly all direct summands act as zero with the exception of $^1\hh_{|{^1}\lambda|}\otimes \dots\otimes {^t}\hh_{|{^t}\lambda|}$ .\\
 A completely analogous result holds for the module $D^\lambda$, where we just replace every $S$ by $D$.
\end{Proposition}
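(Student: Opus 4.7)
This is cited as \cite[Prop 4.11]{DipperMathas}, so the most direct route is simply to quote their result; what follows is how I would reconstruct it. The Morita equivalence of Theorem \ref{Thm_MoritaEq} is realized by a family of pairwise orthogonal central idempotents $\epsilon_\mathbf{n}\in\hh_n$, one for each composition $\mathbf{n}=(n_1,\ldots,n_t)$ of $n$, built from the Jucys-Murphy elements $L_1,\ldots,L_n$ by separating their joint spectrum along the $\sim_q$-equivalence classes of $\QQ$. Concretely, $\epsilon_\mathbf{n}$ is the spectral projector onto those generalised eigenspaces in which exactly $n_j$ of the eigenvalues lie in the $\sim_q$-class generated by $\QQ_j$, and one checks that $\epsilon_\mathbf{n}\hh_n\epsilon_\mathbf{n}\cong {^1\hh_{n_1}}\otimes\cdots\otimes{^t\hh_{n_t}}$, so that $F_n(M)=\bigoplus_\mathbf{n}\epsilon_\mathbf{n}M$.

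The next step is to compute how $\epsilon_\mathbf{n}$ acts on $S^\lambda$. By the cellular Jucys-Murphy theory of $\hh_n$, the elements $L_1,\ldots,L_n$ act upper-triangularly on a Murphy-type basis of $S^\lambda$ indexed by standard $\lambda$-tableaux, with diagonal entries equal to the residues $q^{b-a}Q_c$ of the nodes $(a,b,c)$ of $\lambda$. Since every node in the $c$-th component of $\lambda$ has residue $\sim_q$-equivalent to $Q_c$, counting nodes by component shows that $\epsilon_\mathbf{n}$ acts as the identity on $S^\lambda$ when $n_j=|{^j\lambda}|$ for every $j$ and as zero otherwise, which already forces $F_n(S^\lambda)$ to be concentrated in the single summand ${^1\hh_{|{^1\lambda}|}}\otimes\cdots\otimes{^t\hh_{|{^t\lambda}|}}$.

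It remains to identify that summand with $S^{{^1\lambda}}\otimes\cdots\otimes S^{{^t\lambda}}$. Here I would match Murphy bases via the natural bijection between standard $\lambda$-tableaux and tuples of standard $({^1\lambda},\ldots,{^t\lambda})$-tableaux obtained by reorganising entries block by block and renumbering within each block, and transport the Hecke action along this bijection. I expect the main obstacle to be the compatibility check with the braid generators $T_i$: those $T_i$ that swap entries lying in different $\QQ_j$-blocks must act on the image in a tensor-product-compatible fashion, which follows from the spectral separation furnished by $\epsilon_\mathbf{n}$ together with the quadratic and braid relations of $\hh_n$. The statement for $D^\lambda$ then follows formally, since the invariant bilinear form on $S^\lambda$ is preserved by the central idempotents and hence its radical decomposes along the tensor factors.
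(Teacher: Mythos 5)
The paper does not prove this proposition; it cites \cite[Prop 4.11]{DipperMathas} and moves on, so there is no in-paper argument to compare against, and I assess your reconstruction on its own terms. The middle step is the right observation: the residue of a node $(a,b,c)$ of $\lambda$ is $q^{b-a}Q_c$, so the generalised Jucys--Murphy eigenvalues on $S^\lambda$ are constrained to the $\sim_q$-class of $Q_c$ according to the component index $c$, which forces the composition $(|{^1\lambda}|,\dots,|{^t\lambda}|)$; and the final identification is indeed a Murphy-basis computation.

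The opening step, however, contains a genuine error. If the $\epsilon_{\mathbf{n}}$ were pairwise orthogonal \emph{central} idempotents indexed by all compositions of $n$ --- and the spectral description you give forces $\sum_{\mathbf{n}}\epsilon_{\mathbf{n}}=1$ --- then $\epsilon_{\mathbf{n}}\hh_n\epsilon_{\mathbf{n}}=\epsilon_{\mathbf{n}}\hh_n$, and the asserted isomorphisms $\epsilon_{\mathbf{n}}\hh_n\cong{^1\hh_{n_1}}\otimes\cdots\otimes{^t\hh_{n_t}}$ would yield an \emph{algebra isomorphism} $\hh_n\cong\hh_n^t$, which is much stronger than the Morita equivalence of Theorem \ref{Thm_MoritaEq} and is false. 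Already for $n=2$, $r=t=2$, $r_1=r_2=1$, $q$ generic, one has $\dim_K\hh_2=r^n\,n!=8$ while $\dim_K\hh_2^2=2+1+2=5$. The Dipper--Mathas equivalence is a proper Morita equivalence: the idempotents realising it are not central (one has to prescribe an \emph{ordering} of the eigenvalue classes along $L_1,\dots,L_n$, which breaks centrality), or equivalently one works with a nontrivial progenerator bimodule rather than a two-sided ideal decomposition of $\hh_n$. Once centrality is dropped, your last paragraph is no longer a formality: checking that the braid generators moving entries across distinct $\QQ_j$-blocks act compatibly with the tensor decomposition is the technical heart of the Dipper--Mathas argument and leans on the cellular structure in an essential way, not merely on the quadratic and braid relations.
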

The induction $\ind_n$ on Specht modules is well-understood by the following result by Mathas:
\begin{Proposition}[ { \cite[Thm A]{MathasFiltration}}]\label{SpechtFiltlinks}
 Let $\lambda$ be an $r$-multipartition of $n$. 
The induced module $\ind_n(S^\lambda)$ has a filtration $0=I_0\subseteq I_1\subseteq\dots \subseteq I_a=\ind_n(S^\lambda)$ such that for all $1\leq j\leq a$ the quotient $I_j/I_{j-1}$ is also a Specht-module.
Moreover, the Specht modules appearing as such quotients $I_i/I_{i-1}$ are exactly those indexed by the multipartitions of $n+1$ obtained by adding exactly one addable node to $[\lambda]$ and every such multipartition appears exactly once.
\end{Proposition}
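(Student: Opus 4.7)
The plan is to exploit the Dipper--James--Mathas cellular structure of the Ariki--Koike algebras. Recall that $\hh_n$ is cellular with basis elements labelled by pairs of standard tableaux of the same $r$-multipartition shape, and that $S^\lambda$ has a basis indexed by standard $\lambda$-tableaux. Since $\ind_n(S^\lambda) = S^\lambda\otimes_{\hh_n}\hh_{n+1}$, the idea is to use a basis of $\hh_{n+1}$ as a right $\hh_n$-module to produce an adapted basis of the induced module, and then filter it by a well-chosen ordering on the addable nodes of $\lambda$.

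First, I would invoke the Jucys--Murphy element $L_{n+1}$, which commutes with $\hh_n\subset \hh_{n+1}$ and therefore acts on $\ind_n(S^\lambda)$. Standard $\mu$-tableaux, for $\mu\vdash_r n+1$ obtained from $\lambda$ by adding an addable node, are in bijection with pairs $(\mathfrak{t},x)$ where $\mathfrak{t}$ is a standard $\lambda$-tableau and $x$ is the addable node of $\lambda$ that receives the label $n+1$. This bijection suggests that $\ind_n(S^\lambda)$ should carry a basis indexed by such pairs. I would order the addable nodes $x_1,\dots,x_a$ in a manner compatible with the dominance order on $\{\lambda\cup\{x_i\}\}$, say from highest to lowest, set $\mu_j:=\lambda\cup\{x_j\}$, and define $I_j$ to be the $\hh_{n+1}$-submodule spanned by those adapted basis vectors whose associated addable node lies in $\{x_1,\dots,x_j\}$.

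Once the filtration is in place, the main step is to identify $I_j/I_{j-1}$ with $S^{\mu_j}$. The dimensions agree because deletion of the box labelled $n+1$ yields a bijection between standard $\mu_j$-tableaux and standard $\lambda$-tableaux. One then verifies that the induced $\hh_{n+1}$-action on the quotient coincides with the cellular Specht action on $S^{\mu_j}$: the $L_{n+1}$-eigenvalue is forced by the residue of $x_j$, the $T_i$ for $i<n$ act as in $S^\lambda$, and the delicate part is the action of $T_n$ which exchanges standard $\lambda$-tableaux that differ only at the box adjacent to $x_j$. The key obstacle, and where compatibility with the dominance order is essential, is to show that the Garnir-type relations defining $S^{\mu_j}$ actually hold modulo $I_{j-1}$ rather than merely that $[I_j/I_{j-1}]=[S^{\mu_j}]$ in the Grothendieck group. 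This is the technical heart of Mathas's argument and requires choosing a cellular basis of $\hh_{n+1}$ adapted to the inclusion $\hh_n\subset \hh_{n+1}$ so that the relations governing the removal of $n+1$ are transparent.
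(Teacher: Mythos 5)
Note first that the paper does not prove this proposition; it is quoted verbatim as \cite[Thm A]{MathasFiltration} and used as a black box, so there is no internal proof to compare against. Judged on its own, your sketch reconstructs the right strategy: work with the Dipper--James--Mathas cellular (Murphy) basis, exploit the bijection between standard $\mu$-tableaux for $\mu=\lambda\cup\{x\}$ and pairs (standard $\lambda$-tableau, addable node $x$), observe that $L_{n+1}$ centralises $\hh_n$ inside $\hh_{n+1}$ and hence acts on the induced module, order the addable nodes compatibly with dominance, and define $I_j$ as the span of the adapted basis vectors whose added node is among the first $j$. You also correctly isolate where the real work lies: showing that each $I_j$ is genuinely an $\hh_{n+1}$-submodule and that $I_j/I_{j-1}$ is isomorphic to $S^{\mu_j}$ \emph{as a module}, not merely in the Grothendieck group, which requires verifying the straightening/Garnir relations modulo $I_{j-1}$.

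Two caveats worth flagging. First, the phrase ``order the addable nodes from highest to lowest'' is left ambiguous, and the direction matters: the filtration only works with the dominance order running the correct way relative to the cellular structure, so a proof would have to commit to and justify a specific ordering. Second, and more substantially, the step you call ``the technical heart'' is not a small verification but the bulk of Mathas's paper; asserting that the induced $T_n$-action ``coincides with the cellular Specht action'' on the quotient is precisely what needs a careful computation in terms of the Murphy basis and the $*$-involution, and a dimension count plus a Grothendieck-group identity would not suffice. As a proof your proposal is therefore incomplete, but as a blind reconstruction of the cited argument it identifies the correct scaffolding and is honest about what is missing.
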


We now move towards an analogous result for $\hh_n^t$, at least on the level of Grothendieck groups.
This requires the definition of a number of homomorphisms $R_0(\hh_n^t)\to R_0(\hh_{n+1}^t)$:\\
For $0\leq n_1,\dots, n_t\leq n$ with $\sum_i n_i=n$ and $1\leq j\leq t$ we define 
\[
 ^j\ind_{n,t}^{(n_1,\dots, n_t)}:R_0\left(\hh_n^t\right)\to R_0\left(\hh_{n+1}^{t}\right)
\]
by giving its image on classes of irreducible modules. 
If $M$ is an irreducible module of $\hh_n^t$ that is not in $\left({^1}\hh_{n_1}\otimes\dots\otimes {^t}\hh_{n_t}\right)\modcat$, then set $^j\ind_{n,t}^{(n_1,\dots, n_t)}([M]):=0$.
If $M$ is an irreducible module of $\hh_n^t$ and in $\left({^1}\hh_{n_1}\otimes\dots\otimes {^t}\hh_{n_t}\right)\modcat$, then it is isomorphic to the tensor product 
$D_1\otimes \dots \otimes D_t$ for irreducible $^i\hh_{n_i}$-modules $D_i$. 
In this case we set
\[
^j\ind_{n,t}^{(n_1,\dots, n_t)}([M]):=[D_1\otimes\dots \otimes D_{j-1}\otimes \left(\ind_{^j\hh_{n_j}}^{^j\hh_{n_j+1}}\left(D_j\right)\right)\otimes D_{j+1}\otimes\dots\otimes D_t],
\]
i.e. we apply the usual parabolic induction in the $j$-th component.\\
By Proposition \ref{SpechtFiltlinks}, this immediately yields the following.
\begin{Lemma}\label{Lemma_IndOnOneSpecht}
 For $1\leq i\leq t$ with $i\neq j$ let $M_i\in {^i}\hh_{n_i}\modcat$.
 Let $\alpha\vdash_{r_j} n_j$ be a multipartition. 
 Then it is 
\[
 ^j\ind_{n,t}^{(n_1,\dots, n_t)}([M_1\otimes\dots\otimes S^\alpha\otimes \dots\otimes M_t])= \sum_{\substack{\beta\vdash_{r_j}n_j+1,\\ |\beta\setminus \alpha|=1}} \left[M_1\otimes\dots\otimes S^\beta\otimes\dots\otimes M_t\right],
\]
i.e. all partitions $\beta$ obtained by adding exactly one node to $\alpha$ appear exactly once. 
\end{Lemma}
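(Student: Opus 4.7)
The plan is to reduce the computation to the Specht filtration result of Proposition \ref{SpechtFiltlinks} applied in the $j$-th tensor factor, using a K\"unneth-type identification of the Grothendieck group of the tensor product algebra.

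First I would identify $R_0({^1}\hh_{n_1}\otimes\dots\otimes {^t}\hh_{n_t})$ with $R_0({^1}\hh_{n_1})\otimes_{\Z}\dots\otimes_{\Z}R_0({^t}\hh_{n_t})$: since $K$ is algebraically closed, every simple of the tensor product algebra is uniquely of the form $D_1\otimes\dots\otimes D_t$ with each $D_i$ a simple ${^i}\hh_{n_i}$-module. Under this identification the class $[M_1\otimes\dots\otimes S^\alpha\otimes\dots\otimes M_t]$ corresponds to $[M_1]\otimes\dots\otimes [S^\alpha]\otimes\dots\otimes [M_t]$, because tensoring over $K$ is exact in each slot and composition multiplicities therefore factor.

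Next I would unwind the definition of $^j\ind_{n,t}^{(n_1,\dots,n_t)}$ on simples and extend $\Z$-linearly. Restricted to the $(n_1,\dots,n_t)$-summand of $R_0(\hh_n^t)$, the resulting map corresponds under the K\"unneth identification to $\mathrm{id}\otimes\dots\otimes \ind_{^j\hh_{n_j}}^{^j\hh_{n_j+1}}\otimes\dots\otimes \mathrm{id}$, with $\ind$ in the $j$-th slot (and it is zero on the remaining summands, which is consistent with the fact that on the other summands $M_j$ is forced to be zero). Applying this to $[M_1]\otimes\dots\otimes [S^\alpha]\otimes\dots\otimes [M_t]$ produces $[M_1]\otimes\dots\otimes [\ind_{^j\hh_{n_j}}^{^j\hh_{n_j+1}}(S^\alpha)]\otimes\dots\otimes [M_t]$.

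Now Proposition \ref{SpechtFiltlinks}, applied to the Ariki--Koike algebra ${^j}\hh_{n_j}$ with parameter sequence $\QQ_j$, gives $[\ind_{^j\hh_{n_j}}^{^j\hh_{n_j+1}}(S^\alpha)]=\sum_\beta [S^\beta]$ in $R_0({^j}\hh_{n_j+1})$, where $\beta$ ranges over the $r_j$-multipartitions obtained from $\alpha$ by adding a single addable node, each occurring exactly once. Substituting this into the expression above and undoing the K\"unneth identification yields the claimed equality. The only step more substantial than bookkeeping is the K\"unneth identification of Grothendieck groups; this rests on the classification of simple modules for tensor products of finite-dimensional algebras over an algebraically closed field and is standard, so I do not expect any serious obstacle.
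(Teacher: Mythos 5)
Your proof is correct and takes essentially the same route the paper intends (the paper treats the lemma as an immediate consequence of Proposition~\ref{SpechtFiltlinks}); you have merely spelled out the K\"unneth identification of Grothendieck groups and the factorisation of composition multiplicities across tensor factors that make "immediate" rigorous.
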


Now we set \[\ind_{n,t}^{(n_1,\dots, n_t)}:=\sum_{j=1}^t {^j}\ind_{n,t}^{(n_1,\dots, n_t)}\]
and finally\[\ind_{n,t}:=\sum_{\substack{0\leq n_1,\dots,n_t\leq n,\\\sum_i n_i=n}} \ind_{n,t}^{(n_1,\dots, n_t)}.\]
\begin{Theorem}\label{Theorem_MoritaTheoremOnGrothendieck}
 The following diagram commutes:
 \begin{center}
 \begin{tikzcd}
 R_0\left(\hh_n\right)\arrow{r}{F_n}  \arrow{d}{\ind_n} & R_0\left(\hh_{n}^t\right)\arrow{d}{\ind_{n,t}}\\
 R_0\left(\hh_{n+1}\right)\arrow{r}{F_{n+1}}& R_0\left(\hh_{n+1}^t\right)
\end{tikzcd}
\end{center}
\end{Theorem}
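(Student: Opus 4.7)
The plan is to verify the diagram commutes on a generating set of $R_0(\hh_n)$. Since that group is generated by the Specht classes $\{[S^\lambda] : \lambda \vdash_r n\}$, it suffices to check $\ind_{n,t}(F_n([S^\lambda])) = F_{n+1}(\ind_n([S^\lambda]))$ for every multipartition $\lambda$, decomposed as $\lambda = {}^1\lambda \coprod \dots \coprod {}^t\lambda$ according to the $q$-connected blocks of $\QQ$.

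First I compute the right-then-down composition. Proposition \ref{MoritaOnSpecht} gives
\[
F_n([S^\lambda]) = [S^{{}^1\lambda}\otimes\dots\otimes S^{{}^t\lambda}],
\]
viewed as a class in $R_0(\hh_n^t)$ on which every direct summand except ${}^1\hh_{|{}^1\lambda|}\otimes\dots\otimes {}^t\hh_{|{}^t\lambda|}$ acts as zero. Consequently, when we expand $\ind_{n,t} = \sum_{(n_1,\dots,n_t)} \sum_j {}^j\ind_{n,t}^{(n_1,\dots,n_t)}$, only the tuple $(n_1,\dots,n_t) = (|{}^1\lambda|,\dots,|{}^t\lambda|)$ contributes. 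For that tuple, Lemma \ref{Lemma_IndOnOneSpecht} (with $M_i = S^{{}^i\lambda}$ in the slots $i \neq j$ and $\alpha = {}^j\lambda$) yields
\[
\ind_{n,t}(F_n([S^\lambda])) = \sum_{j=1}^{t}\ \sum_{\substack{\beta \vdash_{r_j} |{}^j\lambda|+1 \\ |\beta \setminus {}^j\lambda|=1}} [S^{{}^1\lambda}\otimes\dots\otimes S^{\beta}\otimes\dots\otimes S^{{}^t\lambda}].
\]

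Next I compute the down-then-right composition. Proposition \ref{SpechtFiltlinks} gives $\ind_n([S^\lambda]) = \sum_{\mu} [S^\mu]$, summed over multipartitions $\mu$ with $|\mu \setminus \lambda| = 1$, each occurring once. The single added node lies in exactly one component of $\lambda$, hence in exactly one of the blocks ${}^1\lambda,\dots,{}^t\lambda$; if it lies in the $j$-th block, then ${}^i\mu = {}^i\lambda$ for $i \neq j$ and ${}^j\mu$ is obtained from ${}^j\lambda$ by adjoining that node. Applying $F_{n+1}$ term-by-term via Proposition \ref{MoritaOnSpecht} and then regrouping the sum according to the index $j$ of the affected block produces exactly the same expression as above. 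The two compositions agree on every generator, so the diagram commutes.

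The proof is essentially bookkeeping; the only point that needs care is the bijection between addable nodes of $\lambda$ and pairs $(j, x)$ where $x$ is an addable node of ${}^j\lambda$. This is immediate from the definition of an addable node together with the fact that the blocks ${}^1\lambda,\dots,{}^t\lambda$ partition the components of $\lambda$, so I do not expect any genuine obstacle.
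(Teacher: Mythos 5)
Your proof is correct and follows essentially the same route as the paper's: reduce to Specht classes, compute $\ind_{n,t}\circ F_n$ via Proposition \ref{MoritaOnSpecht} and Lemma \ref{Lemma_IndOnOneSpecht}, compute $F_{n+1}\circ\ind_n$ via Proposition \ref{SpechtFiltlinks} and Proposition \ref{MoritaOnSpecht}, and match terms through the bijection between addable nodes of $\lambda$ and pairs $(j,\beta)$ with $|\beta\setminus {}^j\lambda|=1$. The only cosmetic difference is that you compute the two compositions fully in parallel before comparing, whereas the paper interleaves the regrouping; the content is identical.
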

\begin{proof}
Let $\lambda\vdash_r n$ and define $^1\lambda, \dots, {^t}\lambda$ as in Proposition \ref{MoritaOnSpecht}. 
For $1\leq i\leq t$ set $n_i:=|{^i}\lambda|$. 
By definition, it is $\ind_{n,t}([F_n(S^\lambda)])=\ind_{n,t}^{(n_1,\dots, n_t)}([F_n(S^\lambda)])$ and, by Lemma \ref{Lemma_IndOnOneSpecht} and the definition of $\ind_{n,t}^{(n_1,\dots, n_t)}$, we have 
\[
 \ind_{n,t}^{(n_1,\dots, n_t)}([F_n(S^\lambda)])=\sum_{j=1}^t \sum_{\substack{\beta\vdash_{r_j} n_j+1\\ |\beta\setminus {^j}\lambda|=1}}
 [S^{^1\lambda}\otimes\dots\otimes S^{^{j-1}\lambda}\otimes S^{\beta}\otimes S^{^{j+1}\lambda}\otimes \dots\otimes S^{^t\lambda}].
\]
For $1\leq j\leq t$ and $\beta\vdash_{r_j} n_j$ with $|\beta\setminus {^j}\lambda|=1$ let $\mu(j, \beta)\vdash_r n$ be the multipartition of $n+1$ obtained as the concatenation $({^1}\lambda, \dots, \beta, \dots, {^t}\lambda)$, where $\beta$ is the $j$'th subsequence. 
By Proposition \ref{MoritaOnSpecht} it is $F_{n+1}([S^{\mu(j, \beta)}])=[S^{^1\lambda}\otimes\dots\otimes S^{^{j-1}\lambda}\otimes S^{\beta}\otimes S^{^{j+1}\lambda}\otimes \dots\otimes S^{^t\lambda}]$.\\
Clearly, the $\mu(j, \beta)$ run exactly over all multipartitions of $n+1$ which are obtained from $\lambda$ by adding exactly one node and every such multipartition appears exactly once. 
Hence, by Proposition \ref{SpechtFiltlinks} it is 
\begin{align*}
 \ind_n([S^\lambda])\ =&\ \sum_{j=1}^t \sum_{\substack{\beta\vdash_{r_j} n_j+1}} [S^{\mu(j, \beta)}]
\end{align*}
 and thus $F_{n+1}(\ind_n([S^\lambda]))= \ind_n^t(F_n([S^\lambda]))$.\\
Since the classes of Specht modules generate the Grothendieck group of $\hh_n$ this already implies the commutativity of the diagram.
\end{proof}

\begin{Corollary}\label{MoritaNumbers}
 The homomorphisms $F_n$ and $F_{n+1}$ obtained from the Morita equivalence preserve the number of irreducible constituents.
 Hence, if $\lambda=(\lambda^{(1)},\dots, \lambda^{(r)})$ is an $r$-multipartition of $n$ such that $D^\lambda\neq 0$, then the number of irreducible constituents of the module $\ind_n(D^\lambda)$ is equal to that of $\ind_{n,t}\left(D^{^1\lambda}\otimes\dots\otimes D^{^t\lambda}\right)$ by Proposition \ref{MoritaOnSpecht}.
By definition, this is equal to the number obtained by summing the number of constituents of $\ind_{^j\hh_{n_j}}^{^j\hh_{n_j+1}}(D^{^j\lambda})$ over all $j$, where $n_j:=|{^j}\lambda|$.\\
Since $^j\hh_{n_j}$ and $^j\hh_{n_j+1}$ are defined over $q$-connected parameters $\QQ_j$ we have now reduced the problem of finding the number of constituents of induced modules to the $q$-connected parameter case.
\end{Corollary}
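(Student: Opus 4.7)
The plan is to chain together the results already established in this subsection; the only substantive input is that the Morita equivalence $F_n$ preserves composition lengths, after which the rest is bookkeeping.

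First I would verify the first assertion, that $F_n$ and $F_{n+1}$ preserve numbers of constituents. The functor $F_n$ comes from a Morita equivalence, so it restricts to a bijection on isomorphism classes of simple modules and induces an isomorphism of Grothendieck groups identifying the two bases of simples. Hence the number of terms (counted with multiplicity) in the expansion of a class $[M]$ in the simple basis is preserved by $F_n$, and similarly by $F_{n+1}$.

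Next I would apply the commutative square from Theorem~\ref{Theorem_MoritaTheoremOnGrothendieck} to the class $[D^\lambda]$, together with the $D$-analogue of Proposition~\ref{MoritaOnSpecht}, to obtain
\[
F_{n+1}\bigl([\ind_n(D^\lambda)]\bigr) \ = \ \ind_{n,t}\bigl([D^{^1\lambda}\otimes\dots\otimes D^{^t\lambda}]\bigr).
\]
Combined with the previous step, this already shows that the number of irreducible constituents of $\ind_n(D^\lambda)$ equals that of $\ind_{n,t}(D^{^1\lambda}\otimes\dots\otimes D^{^t\lambda})$.

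Finally, I would unwind the definition of $\ind_{n,t}$ on this tensor product. Setting $n_j:=|{^j}\lambda|$, only the component $\ind_{n,t}^{(n_1,\dots,n_t)}=\sum_{j=1}^t {^j}\ind_{n,t}^{(n_1,\dots,n_t)}$ acts non-trivially, and each summand applies ordinary parabolic induction in precisely one tensor factor while fixing the others. Since the simples of the tensor-product algebra $^1\hh_{m_1}\otimes\dots\otimes {^t}\hh_{m_t}$ are tensor products of simples of the factors (as recorded in the remark after Theorem~\ref{Thm_MoritaEq}), the total constituent count splits as the sum over $j$ of the numbers of constituents of $\ind_{^j\hh_{n_j}}^{^j\hh_{n_j+1}}(D^{^j\lambda})$, and each $^j\hh_m$ is by construction defined over the $q$-connected sequence $\QQ_j$. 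I do not expect any real obstacle; the only delicate point is to interpret ``number of constituents'' consistently via expansion in the simple basis of the Grothendieck group, which is exactly the structure the Morita equivalence respects.
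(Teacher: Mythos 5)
Your proposal is correct and follows essentially the same chain of reasoning that the paper compresses into the statement of the corollary itself: Morita equivalences preserve composition lengths, the commutative square from Theorem~\ref{Theorem_MoritaTheoremOnGrothendieck} together with the $D$-module version of Proposition~\ref{MoritaOnSpecht} transports $[\ind_n(D^\lambda)]$ to $\ind_{n,t}([D^{^1\lambda}\otimes\dots\otimes D^{^t\lambda}])$, and unwinding the definition of $\ind_{n,t}$ using that the tensor factors $D^{^i\lambda}$ ($i\neq j$) stay simple gives the sum over $j$. You correctly identify the commutative diagram (rather than Proposition~\ref{MoritaOnSpecht} alone) as the real engine of the second assertion.
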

\begin{remark}
 A completely analogous result holds for restriction in place of induction. The filtration in Proposition \ref{SpechtFiltlinks} has to be replaced by that in \cite{MathasFiltrationRestriction} for restriction of Specht modules.
 One has to pay attention when defining the partial restriction homomorphisms $ {^j\res_{n,t}^{(n_1,\dots,n_t)}}$; they are only defined for $n_j\geq 1$.
 Then they are defined on classes of irreducibles via the usual parabolic restriction in the $j$'th component of the tensor product. \\
 In total, we define the restriction on the $\hh_n^t$ side to be 
 \[
  \res_{n,t}=\bigoplus_{j=1}^t\bigoplus_{\substack{0\leq n_1,\dots,n_t\leq n,\\\sum_i n_i=n\\n_j\geq 1}} {^j\res_{n,t}^{(n_1,\dots,n_t)}}.
 \]
 Then the following diagram commutes:
 \begin{center}
  \begin{tikzcd}
 R_0\left(\hh_n\right)\arrow{r}{F_n}  \arrow{d}{\res_n} & R_0\left(\hh_{n}^t\right)\arrow{d}{\res_{n,t}}\\
 R_0\left(\hh_{n-1}\right)\arrow{r}{F_{n-1}}& R_0\left(\hh_{n-1}^t\right)
\end{tikzcd}
\end{center}
\end{remark}
\subsection{\texorpdfstring{$q$}{q}-connected parameters}
Assume that $\QQ=(Q_1,\dots, Q_r)$ is $q$-connected, as we have just reduced our problem to this case.\\
We can further simplify the setting without loss of generality:
Let $a\in K^*$. Then $\hh_{n,r}(q;Q_1,\dots, Q_r)$ is isomorphic to $\hh_{n,r}(q; aQ_1,\dots, aQ_r)$ by replacing $T_0$ with $a^{-1} T_0$. 
Hence, if $\QQ$ is $q$-connected we can assume without loss of generality that there exist non-negative integers $s_1,\dots, s_r$ such that $Q_i=q^{s_i}$ for $1\leq i\leq r$, and will assume this to be the case from now on.
Now let $e\in \Z_{\geq 0}\cup \{\infty\}$ be the multiplicative order of $q$ in $K^*$. 
Then we can additionally assume that $0\leq s_1,\dots, s_r<e$ and as reordering of the $Q_i$ does not change $\hh_n$ we also assume $s_1\leq s_2\leq\dots\leq s_r$.
Set $\s:=(s_1,\dots, s_r)\in \widetilde{\Z_{\geq 0}}$.

In the following we will have to differentiate the cases $q\neq 1$ and $q= 1$.\\
Note that in the latter case people will often switch to considering so-called degenerate cyclotomic Hecke algebras instead, which are slightly different than the Ariki-Koike algebras for $q=1$ we consider here, but the definitions of Ariki-Koike algebras make sense, too, for $q=1$, so we see no reason to exclude this case. 
However, for completeness we will remark in Theorem \ref{Thm_MainTheoremDegenerate} how to handle degenerate cyclotomic Hecke algebras.

\subsubsection{The case \texorpdfstring{$q\neq 1$}{q not 1}}
Set $\hh\modcat:=\bigoplus_{n\geq 0}\hh_n\modcat$ and 
 \[
  \ind:=\bigoplus_{n\geq 0} \ind_n\quad \res:=\bigoplus_{n\geq 0}\res_n.
 \]
We first consider the case $e<\infty$. 
This is where our previous results do come in:
\begin{Proposition}[ { \cite[Exp 5.2.5]{ShanPHD}, \cite[Thm 12.5]{ArikiBook}, \cite[Thm 6.1]{ArikiBranching} }]\label{Prop_AKKCategorification}
The functors $\res$ and $\ind$ constitute a pair of bi-adjoint functors on $\hh\modcat$, yielding an $\sl_e$-categorification.
The $\sl_e$-module $K_0(\hh\modcat)$ is isomorphic to the irreducible highest weight module $L(\Lambda)$ with $\Lambda=\Lambda_{s_1}+\dots+\Lambda_{s_r}$.
If we denote by $B_e(\hh)$ the crystal graph associated to $K_0(\hh\modcat)$, then $B_e(\hh)$ is isomorphic to $B_e(\s)$
and the pre-image of the vertex $\emptyset\vdash_r 0$ is the class of the trivial module of the trivial algebra $\hh_0$.
\end{Proposition}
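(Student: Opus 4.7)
The plan is to assemble this from three established results and a short bookkeeping step, since essentially every component appears in the literature cited just before the statement.

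First I would handle bi-adjointness. The algebra $\hh_{n+1}$ is free as a left (and right) $\hh_n$-module, as recorded via the Ariki-Koike basis, so $\ind_n$ is automatically left adjoint to $\res_n$. For the other adjunction I would invoke the fact, going back to Malle-Mathas, that $\hh_{n+1}$ is a symmetric Frobenius extension of $\hh_n$, which forces $\ind_n$ to also be right adjoint to $\res_n$; this is precisely the setup in \cite{ShanPHD, ArikiBranching}. Exactness on both sides then lets us pass everything to the Grothendieck group.

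Next I would put the $\sl_e$-categorification data on $\hh\modcat$. The natural endomorphism $X \in \End(\ind)$ is defined by letting the Jucys-Murphy element $L_{n+1}$ act on $\ind_n(M) = M \otimes_{\hh_n} \hh_{n+1}$ by right multiplication, and $T \in \End(\ind^2)$ is defined analogously by right multiplication with $T_{n+1}$; the relations in the definition of an $\sl_e$-categorification are then exactly the Hecke relations plus the standard commutation between $T_{n+1}$ and $L_{n+1}$. The eigenvalues of $L_{n+1}$ on induced modules all lie in $q^{\Z}\{Q_1,\dots,Q_r\} \subseteq \{q^i : 0 \le i \le e-1\}$ because of our normalisation $Q_j = q^{s_j}$, which gives the $U_i$-decomposition. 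The weight-space decomposition $\hh\modcat = \bigoplus_\lambda (\hh\modcat)_\lambda$ is the one from Ariki's theorem, obtained by grouping simples by their central character (equivalently by the residue content of the associated Kleshchev multipartition).

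The third and central input is Ariki's categorification theorem: after complexifying, $K_0(\hh\modcat)$ with the action induced from $\res_i := \res \cdot e_i$ and $\ind_i := \ind \cdot f_i$ is isomorphic as an $\sl_e$-module to the irreducible highest weight module $L(\Lambda)$ with $\Lambda = \Lambda_{s_1} + \dots + \Lambda_{s_r}$, cf. \cite[Thm 12.5]{ArikiBook}. Combined with Lemma \ref{Lemma_CrystalOfFockSpace}, this immediately identifies $B_e(\hh)$ with $B_e(\s_\Lambda)$, and by construction $\s_\Lambda = \s$ because we already chose $0 \le s_1 \le \dots \le s_r < e$.

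Finally, for the pre-image of $\emptyset \vdash_r 0$: the highest weight vector of $L(\Lambda)$ is unique up to scalar, and in $K_0(\hh\modcat)$ it must be a class killed by every $\res_i$, i.e. a class of a simple module in $\hh_0\modcat = K\modcat$. The only such class is $[K]$, the trivial module over $\hh_0 = K$. Under any crystal graph isomorphism this corresponds to the unique source of $B_e(\s)$, namely $\emptyset$. The main obstacle here is really just the verification that the endomorphisms built from Jucys-Murphy elements and Hecke generators satisfy the relations demanded by the definition of $\sl_e$-categorification; everything else is a citation or a short trace through the highest weight vector.
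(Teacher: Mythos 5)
Your proposal is correct and matches the paper's approach: the proposition carries no written proof in the paper, only the three bracketed citations to Shan's thesis, Ariki's book, and Ariki's branching paper, and your argument simply unpacks those sources with the standard Jucys--Murphy/Hecke-generator choices for $X$ and $T$, finishing with the clean observation that the unique highest weight direction must be the class $[K]\in K_0(\hh_0\modcat)$ since any nonzero $\hh_n$-module with $n\geq 1$ restricts nontrivially. The one place to be careful is the exact normalization of $T$ relative to the quadratic relation in the definition of $\sl_e$-categorification, but that bookkeeping is already done in the cited references and does not affect the substance of the argument.
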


\begin{Proposition}\label{Prop_MainThme<infty}
Suppose $2\leq e<\infty$.
 Let $n\geq 1$ and $0\neq M\in \hh_n\modcat$. Then $\ind_n(M)$ has at least $r+1$ irreducible constituents. 
 In particular, $\ind_n(M)$ is reducible.
\end{Proposition}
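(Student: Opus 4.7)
The plan is to apply Theorem \ref{Thm_ConstituentsCategorification} to the $\sl_e$-categorification on $\hh\modcat$ furnished by Proposition \ref{Prop_AKKCategorification} and then squeeze an extra constituent out of the bound by exploiting Lemma \ref{Lem_LambdaHasGoodNode}.

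First I would reduce to the case that $M$ is simple. Since $\ind_n$ is exact, any composition series of $M$ induces a filtration of $\ind_n(M)$ whose subquotients are the $\ind_n(S)$ for $S$ the composition factors of $M$. Thus the number of constituents of $\ind_n(M)$ is at least the number of constituents of $\ind_n(S)$ for any single composition factor $S$, and it suffices to establish the bound for simple $M \in \hh_n\modcat$.

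So let $S \in \hh_n\modcat$ be simple with $n\geq 1$. By Proposition \ref{Prop_AKKCategorification} we have an $\sl_e$-categorification with $(U,V) = (\res, \ind)$ and $K_0(\hh\modcat) \cong L(\Lambda_{\s})$, so there is a crystal isomorphism $\Psi\colon B(\hh) \to B_e(\s)$ sending the class of the trivial $\hh_0$-module to the empty multipartition. Since $n\geq 1$, the simple module $S$ is not the trivial $\hh_0$-module, hence $\Psi([S])$ is a \emph{non-empty} vertex of $B_e(\s)$. Following the proof of Theorem \ref{Thm_ConstituentsCategorification}, $\ind_n(S) = V(S)$ has at least $\sum_{i}\varphi_i(\Psi([S]))$ constituents, and by Lemma \ref{Lemma_MoreConormalThanNormal} this equals
\[
r + \sum_{i}\vareps_i(\Psi([S])).
\]

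Now comes the key sharpening: by Lemma \ref{Lem_LambdaHasGoodNode}, every non-empty vertex of $B_e(\s)$ admits an $i$-good node for some $i$, i.e. at least one $i$-normal node. By Lemma \ref{Lemma_NUmberOfBoxesIsVarphi} this forces $\sum_i \vareps_i(\Psi([S])) \geq 1$, so $\ind_n(S)$ has at least $r+1$ irreducible constituents, as required. In particular $\ind_n(S)$ is reducible since $r+1\geq 2$. This argument is quite clean; the only subtlety to watch out for is correctly identifying that $n\geq 1$ translates to $\Psi([S])\neq \emptyset$ on the crystal side, which is exactly the content of the last sentence of Proposition \ref{Prop_AKKCategorification}.
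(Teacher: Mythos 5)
Your argument is correct and essentially identical to the paper's proof: both reduce to simple $M$ by exactness, apply the crystal isomorphism from Proposition \ref{Prop_AKKCategorification} together with the bound $\sum_i\varphi_i(\Psi([M]))$ from Proposition \ref{Prop_ChuangRouquier}, and then invoke Lemmas \ref{Lem_LambdaHasGoodNode}, \ref{Lemma_NUmberOfBoxesIsVarphi}, and \ref{Lemma_MoreConormalThanNormal} plus the fact that $\Psi([M])$ is non-empty to get the extra $+1$. You have merely spelled out the steps a bit more explicitly than the paper does.
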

\begin{proof}
Since $\ind$ is exact it suffices to consider the case that $M$ is irreducible.
Let $\Psi:B_e(\hh)\to B_e(\s)$ be the crystal graph isomorphism from Proposition \ref{Prop_AKKCategorification}.
By Proposition \ref{Prop_ChuangRouquier} and the fact that $\Psi$ is a crystal isomorphism, we see that the number of constituents of $\ind_n(M)$ is at least $\sum_{i=0}^{e-1}\varphi_i(\Psi([M]))$. 
Since $\Psi([M])$ is not the empty partition(and so has at least one removable node), by Lemmas \ref{Lem_LambdaHasGoodNode}, \ref{Lemma_NUmberOfBoxesIsVarphi}, and \ref{Lemma_MoreConormalThanNormal}, this number is at least $r+1$.
\end{proof}

\begin{Remark}\label{Rem_CherednikStrongerVersion}
We can use Proposition \ref{Prop_MainThme<infty} to strengthen the statement of Theorem \ref{Theo_MainTheoremCherednik} on certain modules.
Assume $K=\C$ and suppose $q=\exp(2\pi\sqrt{-1}/e)$.
Let $\HH_n:=\HH_{n, \mathbf{s}, e}$ be a cyclotomic rational Cherednik algebra and $\O_n:=\O_{n, \mathbf{t}, e}$ the corresponding module category, cf. \ref{Section_Cherednik}.
Then there exists a well-defined exact functor  $\KZ_n:\O_n\to \hh_n\modcat$.
For any irreducible $M$ in $\O_n$ the $\hh_n$-module $\KZ_n(M)$ is either irreducible or $0$, hence the number of constituents of $\KZ_n(M)$ is a lower bound for the number of constituents of $M$. 
Thus, by Proposition \ref{Prop_MainThme<infty}, if $n\geq 1$ and $M\in\O_n$ such that $\KZ_n(M)\neq 0$, then $\ind_{\HH_n}^{\HH_{n+1}}(M)$ has at least $r+1$ constituents
\end{Remark}

Now suppose $e=\infty$. \\
This case has been studied in detail by Vazirani in \cite{VazII}. 
Here, too, we obtain a result using crystal graphs. 
The crystal graph $B_\infty(\s)$ is defined just as for $e<\infty$, if we define $x\equiv y\pmod{\infty}$ if and only if $x=y$ for integers $x$ and $y$ to extend the definition of the residue to the case $e=\infty$.
Note that $B_{\infty}$ is a crystal graph for $U_u(\sl_{\infty})$.
\\
For $i\geq 0$ there exists a refined functor $\iind_n\hh_n\modcat\to \hh_{n+1}\modcat$, defined via taking generalised eigenspaces of Jucys-Murphy elements, cf. e.g. \cite{ArikiBranching} for a definition. 
We define $\iind:=\oplus_{n\geq 0} \iind_n$.
\begin{Proposition}[ {\cite{Gro, VazII}}]\label{Prop_PropertiesOfiind}
 The functors $\iind$ satisfy the following:
 \begin{enumerate}
  \item $\iind$ is exact.
  \item For $M\in \hh_n\modcat$ it is $\ind(M)\cong \oplus_{i\geq0}\iind(M)$.
  \item Let $M\in\irr(\hh)$. Then $\tfi(M):=\text{head}(\iind(M))$ is either $0$ or irreducible.
  \item If $M\in\irr(\hh)$ and $\tfi(M)\neq 0$, then the multiplicity of $\tfi(M)$ in $\iind(M)$ is exactly $\max_j\{j\geq 0\mid \tfi(M)^j\neq 0\}$.
  \item Define a directed graph $B_{\infty}(\hh)$ with vertex set $\irr(\hh)$ and directed edges $M\stackrel{i}{\to}N$ for $M\in \irr(\hh_n)$ and $N\in \irr(\hh_{n+1})$ if and only if $\tfi(M)=N$. 
  Then $B_{\infty}(\hh)$ is isomorphic to $B_{\infty}(\s)$ and the pre-image of the vertex $\emptyset\vdash_r 0$ is the class of the trivial module of the trivial algebra $\hh_0$.
 \end{enumerate}
\end{Proposition}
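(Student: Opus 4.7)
The plan is to reduce each assertion to the Grojnowski--Vazirani theory of $i$-induction and $i$-restriction on (cyclotomic) affine Hecke algebras; no new combinatorics is needed beyond what has been developed above.

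First I would address (a) and (b) structurally. By definition, $\iind_n(M)$ is the summand of $\ind_n(M)$ corresponding to the generalised $q^i$-eigenspace of the Jucys--Murphy element $L_{n+1}$ acting on $\ind_n(M)$. The standard input from Grojnowski, exploiting the fact that the elementary symmetric polynomials in $L_1,\dots,L_{n+1}$ lie in the centre of $\hh_{n+1}$, shows that this generalised eigenspace is in fact an $\hh_{n+1}$-submodule, so the generalised eigenspace decomposition $\ind_n(M)=\bigoplus_{i}\iind_n(M)$ is a decomposition in $\hh_{n+1}\modcat$. Exactness in (a) is then immediate since generalised eigenspace projection for a single endomorphism is exact on the category of finite-dimensional modules, and (b) follows because $L_{n+1}$ acts with only finitely many eigenvalues on the finite-dimensional module $\ind_n(M)$, all of which are powers of $q$ by the standard analysis of Jucys--Murphy spectra on induced modules.

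For (c) and (d), I would appeal to the $\mathfrak{sl}_2$-categorification philosophy already used for Proposition~\ref{Prop_ChuangRouquier}. Grojnowski and Vazirani show that, for each $i$, the pair $(\iind,\ires)$ together with natural endomorphisms built from the $L_{n+1}$ satisfies the axioms of an $\mathfrak{sl}_2$-categorification on $\hh\modcat$. Applying the Chuang--Rouquier-type statement to each $i$-component then gives the dichotomy $\tfi(M)\in\{0\}\cup\irr(\hh)$ as well as the formula $[\iind(M):\tfi(M)]=\max\{j\geq 0\mid \tfi^{\,j}(M)\neq 0\}$, exactly as in the $e<\infty$ setting.

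The main obstacle is (e), which is the content of Vazirani's branching theorem, i.e.\ the $e=\infty$ analogue of Ariki's categorification result. The approach is to identify the combinatorial crystal $B_\infty(\s)$ (whose arrows are governed by co-good nodes of multipartitions) with the representation-theoretic graph $B_\infty(\hh)$ by an inductive argument on $n$, starting from the basepoint given by the trivial module of $\hh_0$, which must correspond to the empty multipartition. The inductive step requires matching the operators $\tfi$ on irreducible $\hh_n$-modules defined representation-theoretically with the combinatorial operators on multipartitions; this uses the classification of simple modules of affine Hecke algebras of type $A$, the reduction of $\hh_n$-modules to affine Hecke algebra modules of appropriate central character, and the uniqueness of crystals of integrable highest weight modules. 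This identification, together with Lemma~\ref{Lemma_RestrictedModuleIsoGraphIso}-style uniqueness applied in the quantum setting, yields the desired isomorphism $B_\infty(\hh)\cong B_\infty(\s)$.
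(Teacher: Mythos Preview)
The paper does not give its own proof of this proposition: it is stated with the bracketed citation \cite{Gro, VazII} and no proof environment follows. So there is nothing in the paper to compare your argument against; the proposition is being quoted from the literature.

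As a sketch of what lies behind those citations, your outline for (a), (b) and (e) is essentially on target. For (c) and (d), however, your appeal to Chuang--Rouquier $\mathfrak{sl}_2$-categorification is anachronistic and somewhat circular: Grojnowski's results predate \cite{ChuangRouquier}, and in the $e=\infty$ setting the paper has \emph{not} established an $\sl_e$-categorification (indeed, the whole point of treating $e=\infty$ separately here is that Proposition~\ref{Prop_AKKCategorification} is only stated for $2\le e<\infty$). Grojnowski's original arguments for the simplicity of $\tfi(M)$ and the multiplicity formula proceed directly via the representation theory of affine Hecke algebras (Kato's irreducibility theorem, analysis of the socle/head via the functors $\ires$, and a PBW-type filtration on $\iind$), not by first verifying categorification axioms and then invoking Proposition~\ref{Prop_ChuangRouquier}. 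If you want a self-contained argument you should point to those direct proofs rather than to the categorification machinery, which in this context would itself rest on the very statements you are trying to establish.
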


With this in mind the proof of the following is completely analogous to the case $e<\infty$, as Lemmas \ref{Lem_LambdaHasGoodNode}, \ref{Lemma_NUmberOfBoxesIsVarphi}, and \ref{Lemma_MoreConormalThanNormal} all also hold for $e=\infty$.
\begin{Proposition}\label{Prop_MainThme=infty}
Suppose $e=\infty$.
 Let $n\geq 1$ and $0\neq M\in \hh_n\modcat$. Then $\ind_n(M)$ has at least $r+1$ irreducible constituents. 
 In particular, $\ind_n(M)$ is reducible.
\end{Proposition}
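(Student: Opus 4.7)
The plan is to mimic the proof of Proposition \ref{Prop_MainThme<infty} almost verbatim, substituting the refined induction $\iind$ for the $\sl_e$-categorification machinery and the crystal graph $B_\infty(\s)$ for $B_e(\s)$. First I would reduce to the case that $M$ is irreducible: since $\iind$ (and hence $\ind$) is exact by Proposition \ref{Prop_PropertiesOfiind}(a), a Jordan-H\"older series of $M$ yields a lower bound on constituents of $\ind_n(M)$ equal to the minimum of the lower bounds for its composition factors, so it suffices to handle an irreducible $M \in \hh_n\modcat$.

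Next I would let $\Psi: B_\infty(\hh) \to B_\infty(\s)$ be the crystal graph isomorphism from Proposition \ref{Prop_PropertiesOfiind}(e) and set $\lambda := \Psi([M]) \in B_\infty(\s)$. By parts (b), (c), and (d) of Proposition \ref{Prop_PropertiesOfiind}, one has the decomposition
\[
\ind_n(M) \;\cong\; \bigoplus_{i \in \Z} \iind_n^i(M),
\]
and each summand $\iind_n^i(M)$ contains $\tfi(M)$ with multiplicity exactly $\max\{j \ge 0 \mid \tfi^{\,j}(M) \neq 0\}$. Under the crystal isomorphism $\Psi$, this maximum is precisely $\varphi_i(\lambda)$. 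Therefore the total number of irreducible constituents of $\ind_n(M)$ is at least
\[
\sum_{i \in \Z} \varphi_i(\lambda).
\]

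Now I would invoke the combinatorial lemmas, which by the remark immediately preceding Proposition \ref{Prop_MainThme=infty} remain valid for $e = \infty$. By Lemma \ref{Lemma_MoreConormalThanNormal}, the above sum equals $r + \sum_i \vareps_i(\lambda)$. Since $n \ge 1$, the multipartition $\lambda$ is not the empty multipartition, so by Lemma \ref{Lem_LambdaHasGoodNode} it has an $i$-good node for some $i$, and in particular an $i$-normal node, i.e.\ $\vareps_i(\lambda) \ge 1$ for at least one $i$. Combined with Lemma \ref{Lemma_NUmberOfBoxesIsVarphi}, this shows $\sum_i \vareps_i(\lambda) \ge 1$, yielding the bound $\sum_i \varphi_i(\lambda) \ge r + 1$, which finishes the proof.

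The only real content here is verifying that the three combinatorial lemmas genuinely extend to $e = \infty$; this is essentially trivial, as their proofs make no use of the modular arithmetic beyond declaring two residues equal, and the definition of residue is simply upgraded by the convention $x \equiv y \pmod{\infty}$ iff $x = y$. The only place one must be slightly careful is confirming that Proposition \ref{Prop_PropertiesOfiind} supplies the same multiplicity lower bound in $\iind$ that Proposition \ref{Prop_ChuangRouquier} supplied in the $\sl_e$-categorification setting; since parts (c)--(d) state exactly this, no further work is required.
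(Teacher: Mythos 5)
Your proposal is correct and follows essentially the same route as the paper, which simply states that the proof is "completely analogous to the case $e<\infty$" after replacing the $\sl_e$-categorification input by Proposition \ref{Prop_PropertiesOfiind}; you have just spelled out that analogy in detail. (One small imprecision: by exactness of $\ind_n$, the number of constituents of $\ind_n(M)$ is the \emph{sum} of those of $\ind_n(S_j)$ over a composition series of $M$, not the minimum — but the reduction to $M$ irreducible is valid either way.)
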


\subsubsection{The case \texorpdfstring{$q=1$}{q = 1}}
Now assume $q=1$ and $n\geq 1$.\\
Note that the $q$-connectedness of the $Q_i$ then implies that they are all equal to $1$.
\begin{remark}
 Beware that in general $\hh_n$ is \emph{not} isomorphic to the so-called degenerate cyclotomic Hecke algebra.\\
 Furthermore, for $r>1$ it is in general \emph{not} isomorphic to the group algebra $K[G(r,1,n)]$. 
 For example, if $K=\C$ and $r>1$ we see that $\hh_n$ is not semisimple by \cite[Cor 3.3]{MathasAK} and thus it is clearly not isomorphic to the semisimple algebra $\C[G(r,1,n)]$.
\end{remark}
 
For $q=1$, the subalgebra of $\hh_n$ that is generated by $T_1,\dots, T_{n-1}$ is isomorphic to the group algebra $K[\sy_n]$ and we identify the two.
 The Specht and irreducible modules of $\hh_n$ for $q=1$ have been studied by Mathas.
 Their structure is not overly complicated.
\begin{Proposition}[  {\cite[Theorem 3.7, Lemma 3.3]{Mathasq1}}]
 Let $\lambda=\left(\lambda^{(1)},\dots, \lambda^{(r)}\right)\vdash_r n$ be an $r$-multipartition such that $D^\lambda\neq 0$. 
 Then the following holds:
 \begin{enumerate}
  \item It is $\lambda^{(j)}=\emptyset$ unless $j=r$.
  \item The Jucys-Murphy elements $L_1,\dots, L_n$ act trivially on $S^\lambda$ and hence also on $D^\lambda$.
 \end{enumerate}
\end{Proposition}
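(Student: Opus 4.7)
The plan is to exploit the fact that when $q=1$ and $Q_1=\dots=Q_r=1$, the algebra $\hh_n$ collapses onto the group algebra $K[\sy_n]$ after quotienting by the two-sided ideal $\langle T_0 - 1\rangle$. First I would establish (2); this forces every simple $D^\lambda$ to descend to a simple $K[\sy_n]$-module, and a counting argument then yields (1).

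For (2), I would invoke the Dipper--James--Mathas Murphy cellular basis of $\hh_n$. The Specht module $S^\lambda$ has a basis $\{m_t\}_t$ indexed by standard $\lambda$-tableaux, on which the Jucys--Murphy elements act triangularly with respect to the dominance order: $L_k m_t = c_t(k) m_t + \sum_{v \rhd t} r_{v,t}\, m_v$, where $c_t(k) = q^{b-a}Q_c$ is the content of the node of $t$ containing $k$. Under our hypotheses every content equals $1$, so $L_k - 1$ is strictly upper triangular. To upgrade this to the exact equality $L_k = 1$, one inspects the off-diagonal coefficients $r_{v,t}$ arising in the DJM recursion: they factor through differences $c_t(k)-c_v(k)$ of contents and hence also vanish when every content is $1$. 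The conclusion for $D^\lambda$ then follows by passing to the quotient $S^\lambda \twoheadrightarrow D^\lambda$.

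For (1), statement (2) implies in particular that $T_0 = L_1$ acts as the identity on every non-zero $D^\lambda$, so $D^\lambda$ is a module for the quotient $\overline{\hh_n} := \hh_n / \langle T_0 - 1\rangle$. A direct check of the defining relations shows $\overline{\hh_n} \cong K[\sy_n]$: the cyclotomic relation $(T_0-1)^r = 0$ becomes trivial, the mixed braid relation $T_0 T_1 T_0 T_1 = T_1 T_0 T_1 T_0$ collapses to $T_1^2 = T_1^2$, and the remaining relations among $T_1,\dots,T_{n-1}$ are exactly those of $\sy_n$. Since the non-zero $D^\lambda$ are pairwise non-isomorphic as $\hh_n$-modules and the action factors through $\overline{\hh_n}$, they are also pairwise non-isomorphic as $K[\sy_n]$-modules, hence labelled injectively by the ($p$-regular) partitions $\mu$ of $n$ that index simple $K[\sy_n]$-modules. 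On the other hand, for $\lambda = (\emptyset,\dots,\emptyset,\mu)$ the DJM construction identifies $S^\lambda$ with the classical symmetric-group Specht module $S^\mu$, so $D^\lambda \cong D^\mu$ and this is non-zero exactly when $\mu$ indexes a simple $K[\sy_n]$-module. These multipartitions already realise every such label, so by injectivity no other $\lambda$ can contribute, forcing $\lambda^{(j)} = \emptyset$ for all $j<r$.

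The main obstacle is verifying the vanishing of the off-diagonal coefficients $r_{v,t}$ in step two: this amounts to unravelling the DJM recursion and confirming that each correction factors through a content difference, so that all corrections collapse when every content equals $1$. The remaining quotient identification and counting argument for (1) then run without further difficulty.
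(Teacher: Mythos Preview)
The paper does not prove this proposition at all; it is simply quoted from Mathas's paper \cite{Mathasq1} as a black box. So there is no ``paper's own proof'' to compare against---only Mathas's original argument, which you would have to consult directly.

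Your overall architecture is sound. The reduction of (1) to (2) via the surjection $\hh_n\twoheadrightarrow K[\sy_n]$, $T_0\mapsto 1$, together with the cellular non-isomorphism of the $D^\lambda$ and the identification $S^{(\emptyset,\dots,\emptyset,\mu)}\cong S^\mu$, is clean and correct.

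The genuine gap is exactly where you flag it: the assertion that the off-diagonal coefficients $r_{v,t}$ in $m_t L_k=c_t(k)m_t+\sum_{v\rhd t}r_{v,t}m_v$ ``factor through differences $c_t(k)-c_v(k)$''. This is not a standard stated fact about the Murphy basis. Content differences appear in \emph{denominators} when one passes to the seminormal basis $f_t$, but the integral Murphy-basis coefficients $r_{v,t}\in\Z[q^{\pm1},Q_1,\dots,Q_r]$ are obtained by a recursion through the $T_i$-action, and it is not clear a priori that each correction term carries a content-difference factor in the \emph{numerator}. You would need to trace the recursion explicitly (e.g.\ via $L_{k+1}=T_kL_kT_k$ at $q=1$ and the known $T_k$-action on $\{m_t\}$) and check this specialisation case by case; the bookkeeping is nontrivial and may not collapse as neatly as you hope. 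Mathas's own Lemma~3.3 more likely argues directly with the cyclic generator $m_\lambda=u_\lambda^+ x_\lambda$ of $S^\lambda$, exploiting that at $q=1$, $Q_i=1$ the factor $u_\lambda^+$ is built from powers of $(L_i-1)$, rather than analysing the full triangular action. Before investing in the recursion, it would be worth looking at his actual proof.
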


Hence, the action on Specht and irreducible modules is completely determined by the restriction to the group algebra $K[\sy_n]$.
In particular, the irreducible $\hh_n$-modules are exactly the irreducible $K[\sy_n]$-modules seen as $\hh_n$-modules by letting $T_0=L_1$ act as $1$.\\
For a partition $\alpha\vdash n$ denote by $S^\alpha$ the Specht module of $K[\sy_n]$ and as usual by $D^\alpha$ its quotient by the radical of the corresponding bilinear form. 
It is well-known that $D^\alpha\neq 0$ if and only if $\alpha$ is $p$-restricted, where $p$ is the characteristic of $K$.

Denote by $\res_{\sy_n}$ the restriction functor $\res_{K[\sy_n]}^{\hh_n}$. Then the next result follows from a close study of the explicit construction of Specht modules for $\hh_n$ and $K[\sy_n]$.
\begin{Lemma}
 Let $\lambda=(\emptyset,\dots, \emptyset, \lambda^{(r)})\vdash_r n$. Then the following holds:
 \begin{enumerate}
  \item The restriction $\res_{\sy_n}(S^\lambda)$ is isomorphic to $S^{\lambda^{(r)}} \in K[\sy_n]\modcat$.\label{firstpart}
  \item The restriction $\res_{\sy_n}(D^\lambda)$ is isomorphic to $D^{\lambda^{(r)}}\in K[\sy_n]\modcat$.\label{secondpart}
 \end{enumerate}
\end{Lemma}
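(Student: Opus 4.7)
My approach is to unpack the Dipper--James--Mathas cellular construction of $S^\lambda$ in our special situation ($q=1$, all $Q_i=1$, and $\lambda=(\emptyset,\dots,\emptyset,\lambda^{(r)})$) and identify it with the classical construction of the symmetric-group Specht module $S^{\lambda^{(r)}}$.

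First I would note that $S^\lambda$ admits a Murphy-type basis indexed by standard $\lambda$-tableaux, i.e.\ by fillings of $[\lambda]$ with $1,\dots,n$ that are strictly increasing along rows and columns within every component. Under our hypothesis on $\lambda$, every entry is forced into the last component, so such tableaux are in canonical bijection with standard $\lambda^{(r)}$-tableaux. Extending this bijection $K$-linearly produces a candidate isomorphism $\phi\colon S^\lambda \to S^{\lambda^{(r)}}$ of $K$-vector spaces, where the target is the classical Specht module equipped with its Murphy basis.

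Next I would verify that $\phi$ is $K[\sy_n]$-equivariant. The Dipper--James--Mathas formula for the action of $T_i$ (with $1\leq i\leq n-1$) on a Murphy basis vector of $S^\lambda$ expresses the result as a linear combination of other basis vectors whose coefficients are expressions in $q$, the $Q_j$, and the residue eigenvalues of the Jucys--Murphy elements $L_j$. By the proposition of Mathas cited just above, the $L_j$ act as the identity on $S^\lambda$. Substituting $q=1$ and $Q_j=1$ and collapsing every $L_j$-factor to $1$, the formula reduces precisely to the classical Murphy formula (at $q=1$) for the action of the transposition $s_i$ on $S^{\lambda^{(r)}}$; this gives the first claim.

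For the second claim, both $S^\lambda$ and $S^{\lambda^{(r)}}$ carry a cellular bilinear form whose value on a pair of basis vectors is read off from the corresponding product of Murphy-type cellular basis elements in $\hh_n$ (respectively in $K[\sy_n]$). The same collapse of $T_0$- and $Q_j$-contributions shows that these products agree under the embedding $K[\sy_n]\hookrightarrow \hh_n$, so $\phi$ is an isometry. Consequently $\phi(\rad S^\lambda)=\rad S^{\lambda^{(r)}}$, and $\phi$ descends to the desired $K[\sy_n]$-isomorphism $D^\lambda\cong D^{\lambda^{(r)}}$. The main obstacle is the explicit check in the preceding paragraph: one has to verify that in the DJM action formula every factor coming from $T_0$ or from the parameters $Q_j$ really collapses to $1$ in our setting. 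This is bookkeeping rather than deep mathematics, but the argument stands or falls on it.
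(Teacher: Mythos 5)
Your approach lives in the same world as the paper's (Murphy/cellular bases indexed by standard tableaux, a natural bijection between standard $\lambda$-tableaux and standard $\lambda^{(r)}$-tableaux), but the execution diverges in a way that matters.

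The paper does not compute any action formulas at all. Its key observation is purely structural: under the identification of $\beta\vdash n$ with $(\emptyset,\dots,\emptyset,\beta)\vdash_r n$, the Murphy cellular basis elements $m'_{\mathfrak{a}\mathfrak{b}}$ of $K[\sy_n]$ are \emph{literally equal}, as elements of $\hh_n$ via the inclusion $K[\sy_n]\hookrightarrow\hh_n$, to the cellular basis elements $m_{\mathfrak{a}\mathfrak{b}}$ of $\hh_n$. From this one only needs the ideal-theoretic fact $\hh^\lambda\cap K[\sy_n]=K[\sy_n]^\alpha$ (intersection of the cell ideal with the subalgebra) to get both the Specht-module isomorphism and the compatibility of the bilinear forms for free. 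No formula for the $T_i$-action is needed, and nothing is ``collapsed.''

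Two concrete issues with your proposal. First, you say the DJM action of $T_i$ on a Murphy basis vector has coefficients involving ``the residue eigenvalues of the Jucys--Murphy elements $L_j$'' and that these collapse using the cited proposition. That is not how the Murphy basis works: the $T_i$-action there is governed by tableau combinatorics and dominance-lower terms, not by $L_j$-eigenvalues (those enter the \emph{seminormal} basis, which is a different basis). So the ``collapse'' step as you describe it is not the right mechanism, even if the conclusion happens to be true. Second, your argument is wedded to $q=1$ and $Q_j=1$, whereas the paper's argument is parameter-free and, as the paper remarks immediately after the proof, goes through verbatim for arbitrary $q$. That generality is a genuine advantage you give up. Your route could probably be repaired with careful Garnir-type bookkeeping, but the paper's route makes that unnecessary.
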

\begin{proof}
 We refer the reader to \cite[Ch 3]{MathasAK} and \cite[Ch 3]{MathasBook} for details on the construction of Specht modules and follow these references.
 In particular, we do not vigorously define everything in this proof but rather assume familiarity with the construction and the necessary vocabulary.\\
 Let us recall, though, that a tableau of shape $\mu\vdash_r n$ is a one-to-one labeling of the nodes of $\mu$ with the numbers $\{1,\dots, n\}$ and that a standard tableau is a special tableau.
 Accordingly, one can define (standard) tableaux of shape $\beta\vdash n$.
 \\ 
 Set $\alpha:=\lambda^{(r)}$. 
 To shorten notation throughout this proof let $\hh:=\hh_n$ and $\hhh:=K[\sy_n]$.\\
 Then $\hh$ has a $K$-basis $M:=\{m_{\mathfrak{u}\mathfrak{v}}\mid \mu\vdash_r n, \ \mathfrak{u},\mathfrak{v} \text{ standard tableaux of shape } \mu\}$ 
 and $\hhh$ has a $K$-basis $M':=\{m_{\mathfrak{a}\mathfrak{b}}'\mid \beta\vdash n, \ \mathfrak{a},\mathfrak{b} \text{ standard tableaux of shape } \beta\}$.
 The key observation is that via identifying $\beta\vdash n$ with $(\emptyset, \dots, \emptyset, \beta)\vdash_r n$ the elements $m_{\mathfrak{a}\mathfrak{b}}'$ and $m_{\mathfrak{a}\mathfrak{b}}$ are equal for all standard tableaux $\mathfrak{a}$ and $\mathfrak{b}$ of shape $\beta$, hence $M'$ embeds into $M$.\\
 Now let $\hh^\lambda$ be the $K$-span of all $m_{\mathfrak{u}\mathfrak{v}}$ where the shape of $\mathfrak{u}$ strictly dominates $\lambda$ and let $\hhh^\alpha$ be the $K$-span of all $m_{\mathfrak{a}\mathfrak{b}}'$ where the shape of $\mathfrak{a}$ strictly dominates $\alpha$. 
 Note that in the first case we consider the dominance order of $r$-multipartitions, whereas in the latter the dominance order of ordinary partitions is used.
 Then $\hh^\lambda\trianglelefteq \hh$ and $\hhh^\alpha\trianglelefteq \hhh$ are two-sided ideals.
 Furthermore, it follows from the definitions that $\hh^\lambda\cap \hhh=\hhh^\alpha$.
 \\
 Now $S^\lambda$ is a submodule of $\hh/\hh^\lambda$ with $K$-basis $\{m_{\mathfrak{u}}+\hh^\lambda\mid\mathfrak{u}\text{ a standard tableau of shape }\lambda \}$, where we set $m_{\mathfrak{u}}:=m_{\mathfrak{t}^\lambda\mathfrak{u}}$ for $\mathfrak{t}^\lambda$ the tableau obtained by labeling $\lambda$ left to right, top to bottom.
 Similarly, via the above identification of tableau of shape $\alpha$ with those of shape $\lambda$ we know that $S^\alpha$ is the submodule of $\hhh/\hhh^\alpha$ with $K$-basis $\{m_{\mathfrak{u}}+\hhh^\alpha \mid \mathfrak{u}\text{ a standard tableau of shape }\lambda\}$.\\
 Since the representatives of the basis elements all lie in $\hhh$ and because $\hh^\lambda\cap \hhh=\hhh^\alpha$, we see that the $K$-vector space isomorphism $\Psi:S^\lambda\to S^\alpha\, ; \, m_{\mathfrak{u}}+\hh^\lambda\mapsto m_{\mathfrak{u}}+\hhh^\alpha$ is an $\hhh$-module isomorphism.
 This proves \ref{firstpart}.\\
 The irreducible modules $D^\lambda$ and $D^\alpha$ are defined as quotients of $S^\lambda$ and $S^\alpha$ by the radical of bilinear forms $\langle\ ,\ \rangle_\lambda$ and $\langle\ ,\ \rangle_\alpha$ on $S^\lambda$ and $S^\alpha$, respectively.
 These forms are defined via a number of of equations in $\hh/\hh^\lambda$ and $\hhh/\hhh^\alpha$, respectively, and from $\hh^\lambda\cap \hhh=\hhh^\alpha$ we can deduce that $\Psi$ respects the forms, i.e. $\langle x, y \rangle_\lambda=\langle \Psi(x),\Psi(y)\rangle_\alpha$ for all $x,y$ in $S^\lambda$.
 Thus, $\Psi$ induces an $\hhh$-isomorphism $D^\lambda\to D^\alpha$, proving \ref{secondpart}.
\end{proof}
\begin{remark}
 The above proof does not require $q$ to be $1$.
 As everything in \cite{MathasBook} is actually carried out for arbitrary Iwahori-Hecke algebras of type $A$, our proof still holds for arbitrary $q$, in which case the subalgebra of $\hh_n$ generated by $T_1,\dots, T_{n-1}$ is an Iwahori-Hecke algebra of type $A$ with parameter $q$.\\
 Finally, note that the Specht modules defined in \cite{MathasBook} are what other authors might call \emph{dual Specht modules} instead.
\end{remark}

Let us study the module structure of the induced module $\ind_n(D^\lambda)$ more carefully.
We begin with a number of technical results.
\begin{Lemma}\label{Lemma_ActionOfJucys}
 Let $0\leq i\leq n+1$. Then the following holds.
 \begin{enumerate}
  \item $(L_i-1)^r=0$. 
  \item For $w\in \sy_n$, we have $wL_i=L_{(i)w^{-1}}w$, where $(i)w^{-1}$ is the image of $i$ under the natural right action of $w\in\sy_n$ on $\{1,\dots, n\}$. 
  In particular we have $wL_{i+1}=L_{i+1}w$.
 \end{enumerate}
\end{Lemma}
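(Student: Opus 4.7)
The plan is to handle the two parts separately, exploiting the extra structure forced by $q=1$ and by the cyclotomic relation with $Q_1=\cdots=Q_r=1$.

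For part (a), I would first observe that the quadratic relation $(T_i-q)(T_i+1)=0$ collapses under $q=1$ to $T_i^2=1$, so each $T_i$ (for $1\leq i\leq n-1$) is self-inverse. The recursive definition $L_{i+1}=q^{-1}T_iL_iT_i$ then reads $L_{i+1}=T_iL_iT_i=T_iL_iT_i^{-1}$, which exhibits $L_{i+1}$ as a conjugate of $L_i$. An immediate induction shows that each $L_i$ is conjugate (inside $\hh_n$) to $L_1=T_0$. The cyclotomic relation $(T_0-Q_1)\cdots(T_0-Q_r)=0$ with $Q_1=\cdots=Q_r=1$ becomes $(T_0-1)^r=0$, and since conjugation preserves this polynomial identity, $(L_i-1)^r=0$ holds for every $i$.

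For part (b), my approach is to establish the commutation rule for a single simple transposition $s_j=T_j$ (under $q=1$ the subalgebra generated by $T_1,\dots,T_{n-1}$ is identified with $K[\sy_n]$) and then propagate to arbitrary $w$ by induction on length. Three cases arise. First, if $j\notin\{i-1,i\}$, then because $L_i=T_{i-1}\cdots T_1T_0T_1\cdots T_{i-1}$ involves only $T_0,T_1,\dots,T_{i-1}$, the braid and far-commutation relations imply that $T_j$ commutes with every factor, giving $T_jL_i=L_iT_j=L_{(i)s_j}T_j$ since $s_j$ fixes $i$. Second, for $j=i$, the definition $L_{i+1}=T_iL_iT_i$ combined with $T_i^2=1$ yields $T_iL_i=L_{i+1}T_i=L_{(i)s_i}T_i$. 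Third, for $j=i-1$, apply the previous case with the index shifted by one and rearrange using $T_{i-1}^2=1$ to obtain $T_{i-1}L_i=L_{i-1}T_{i-1}=L_{(i)s_{i-1}}T_{i-1}$.

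With these three base cases in hand, a routine induction on the length of a reduced expression $w=s_{j_1}\cdots s_{j_k}$ gives $wL_i=L_{(i)w^{-1}}w$: applying the inductive hypothesis to $w'=s_{j_2}\cdots s_{j_k}$ followed by the single-transposition rule to commute $s_{j_1}$ past $L_{(i)(w')^{-1}}$ produces the required index $((i)(w')^{-1})s_{j_1}=(i)w^{-1}$. The ``in particular'' assertion is then the specialization of this formula when $w$ fixes $i+1$ (so $(i+1)w^{-1}=i+1$), which is the intended setting when $w$ lies in the parabolic subgroup $\sy_i$. The main obstacle is purely bookkeeping — careful tracking of the permutation-of-indices convention and of which generators appear in the explicit expression for $L_i$; the algebraic content is a direct consequence of the braid, commutation, and quadratic relations together with $q=1$.
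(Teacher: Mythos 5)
Your overall strategy matches the paper's: part (a) via conjugacy of all the $L_i$ to $L_1=T_0$ using $T_j^2=1$, and part (b) via base cases for single transpositions followed by induction on the length of $w$. Part (a) is correct, as are your Cases 2 and 3 (using $T_i^2=1$ to rearrange $L_{i+1}=T_iL_iT_i$), and the length induction is set up properly.

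However, the justification you give for Case 1 is wrong for $1\le j\le i-2$. You assert that for $j\notin\{i-1,i\}$ the generator $T_j$ commutes with every factor of $L_i=T_{i-1}\cdots T_1T_0T_1\cdots T_{i-1}$, but when $j\le i-2$ the factor $T_{j+1}$ appears in $L_i$ and does \emph{not} commute with $T_j$; they satisfy the braid relation, not a far-commutation. Concretely, $T_1$ does not commute with the factors $T_2$ of $L_3=T_2T_1T_0T_1T_2$, yet $T_1L_3=L_3T_1$ still holds. Establishing $T_jL_i=L_iT_j$ in this range genuinely uses the braid relation, e.g.\ by induction on $i$ through $L_i=T_{i-1}L_{i-1}T_{i-1}$: the non-trivial sub-case $j=i-2$ requires $T_{i-2}T_{i-1}T_{i-2}=T_{i-1}T_{i-2}T_{i-1}$ together with the (inductive) commutation of $T_{i-1}$ with $L_{i-2}$. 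This is not a fringe case: the ``in particular'' clause $wL_{n+1}=L_{n+1}w$ for $w\in\sy_n$ needs $T_jL_{n+1}=L_{n+1}T_j$ for all $1\le j\le n-1=(n+1)-2$, which lie entirely in the range your argument fails to cover. (The paper itself leaves this commutation rule to the reader, so the gap is in your added justification rather than in a disagreement of strategy.)
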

\begin{proof}
 For $q=1$ we see that the generator $T_j$ of $\hh_n$ is an involution for $1\leq j\leq n-1$, i.e. $T_j^2=1$. 
 The definition of the $L_i$ then implies that they are all conjugated in $\hh_{n+1}$. 
 As $L_1=T_0$, and $(T_0-1)^r=0$ is one of the defining relations of $\hh_{n+1}$, we see that $(L_i-1)^r=0$ for all $i$.\\
 Similarly, from the definition of the $L_{i}$ one can deduce that 
 $ T_jL_i=\begin{cases}L_{i+1}\,{T\!}_j, \quad &\text{ if } i=j,\\ L_{i-1}\,{T\!}_j,\quad &\text{ if } i-1=j\\ L_i\,{T\!}_j, \quad &\text{ otherwise}\end{cases}$
 for $1\leq j\leq n-1$.
 It follows by induction that $wL_i=L_{(i)w^{-1}}w$.
\end{proof}

\begin{Lemma}\label{Lemma_GoodBasisOfIndDlambda}
 Let $D^\lambda$ be an irreducible $\hh_n$-module and $B$ a $K$-basis of $D^\lambda$. 
 Moreover, let $Y$ be the set of distinguished right coset representatives of $\sy_n$ in $\sy_{n+1}$.
 Then 
 \[
  \{b\otimes_{\hh_n}(L_{n+1}-1)^jy\mid b\in B, \ y\in Y, \ 0\leq j<r\}
 \]
 is a $K$-basis of $\ind_n(D^\lambda)$.
\end{Lemma}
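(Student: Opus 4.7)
The plan is to first establish that $\hh_{n+1}$ is free as a left $\hh_n$-module with basis
\[
 X:=\{(L_{n+1}-1)^j T_y\mid y\in Y,\ 0\le j<r\}.
\]
Once that is in place, the claim of the lemma is immediate: $\ind_n(D^\lambda)=D^\lambda\otimes_{\hh_n}\hh_{n+1}$ decomposes as a direct sum of copies of $D^\lambda$ indexed by $X$, and in each summand a $K$-basis is given by $\{b\otimes x\mid b\in B\}$, so uniting over $x\in X$ produces the asserted basis.

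For the spanning half, I would take an Ariki-Koike basis element $L_1^{a_1}\cdots L_{n+1}^{a_{n+1}} T_w$ and use the standard factorisation $w=w'y$ with $w'\in \sy_n$, $y\in Y$, and $\ell(w)=\ell(w')+\ell(y)$, so that $T_w=T_{w'}T_y$. Because $w'\in\sy_n$ fixes $n+1$ in its natural permutation action, Lemma \ref{Lemma_ActionOfJucys} gives $T_{w'}L_{n+1}^{a_{n+1}}=L_{n+1}^{a_{n+1}}T_{w'}$, rewriting the chosen basis element as $h\cdot L_{n+1}^{a_{n+1}} T_y$ with $h:=L_1^{a_1}\cdots L_n^{a_n}T_{w'}\in\hh_n$. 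Expanding $L_{n+1}^{a_{n+1}}=((L_{n+1}-1)+1)^{a_{n+1}}$ by the binomial theorem and invoking $(L_{n+1}-1)^r=0$ (the first part of Lemma \ref{Lemma_ActionOfJucys}) then expresses this as a left $\hh_n$-linear combination of elements of $X$.

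Freeness I would handle by a clean dimension count: the Ariki-Koike basis theorem gives $\dim_K \hh_{n+1}/\dim_K \hh_n=r(n+1)=|X|$, so the surjective left $\hh_n$-linear map $\hh_n^{\oplus X}\twoheadrightarrow \hh_{n+1}$ coming from the spanning step has matching $K$-dimensions on either side and is therefore an isomorphism. Tensoring with $D^\lambda\otimes_{\hh_n}(-)$ commutes with direct sums and yields the stated basis of $\ind_n(D^\lambda)$. The main obstacle is really only bookkeeping: one has to identify the correct coset factorisation of $w$, commute $L_{n+1}$ past $T_{w'}$, and bound $j$ via $(L_{n+1}-1)^r=0$. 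Each of these is routine given the preceding lemma, so no genuine technical surprise is expected.
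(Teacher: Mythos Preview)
Your proof is correct and follows essentially the same approach as the paper: both establish that $\hh_{n+1}$ is free over $\hh_n$ with a basis of the desired shape and then tensor with $D^\lambda$. The only cosmetic difference is that the paper cites the freeness with basis $\{L_{n+1}^j y\}$ as well known and performs the unitriangular change from $L_{n+1}^j$ to $(L_{n+1}-1)^j$ after tensoring, whereas you prove freeness for $\{(L_{n+1}-1)^j T_y\}$ directly from the Ariki--Koike basis and a dimension count before tensoring.
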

\begin{proof}
 It is well-known that $\hh_{n+1}$ is free over $\hh_n$ as a left module and that a basis is given by $\{L_{n+1}^jy\mid y\in Y, \ 0\leq j<r\}$. 
 It follows, that
  $\{b\otimes_{\hh_n}L_{n+1}^jy\mid b\in B, \ y\in Y, \ 0\leq j<r\}$ is a $K$-basis of $\ind_n(D^\lambda)=D^\lambda\otimes_{\hh_n} \hh_{n+1}$.
 As we have \[b\otimes (L_{n+1}-1)^jy = b\otimes L_{n+1}^jy + \sum_{k=0}^{j-1} (-1)^{j-k}\begin{pmatrix}j\\ k\end{pmatrix}\left(b\otimes L_{n+1}^{k}y\right)\] for $b\in B$, $y\in Y$, $0\leq j<r$, induction on $j$ now shows that $\{b\otimes_{\hh_n}(L_{n+1}-1)^jy\mid b\in B, \ y\in Y, \ 0\leq j<r\}$, too, is a $K$-basis of $\ind_n(D^\lambda)$.
\end{proof}
\begin{Proposition}\label{Prop_ModulestructureIndModdegenerate}
 Assume the setting of Lemma \ref{Lemma_GoodBasisOfIndDlambda}.
 For $0\leq {\ell}< r$ let $M_{\ell}$ be the $K$-vector space spanned by 
 \[
       \left\{b\otimes_{\hh_n} \left(L_{n+1}-1\right)^{j}y\mid b\in B,\ y\in Y,\ \ell\leq j< r \right\}                                                        
 \]
 and set $M_r:=0$.
Then the following holds:
\begin{enumerate}
 \item For every $\ell$, $M_\ell$ is an $\hh_{n+1}$-module.
 \item It is $0=M_r\lneq M_{r-1}\dots\lneq M_{1}\lneq M_0=\ind_n(D^\lambda)$.
 \item Set $N_\ell:=M_{\ell}/M_{\ell+1} $ for $0\leq \ell\leq r-1$.
      Then $T_0=L_1$ acts trivially on $N_\ell$ and $\res_{\sy_{n+1}}(N_\ell)$ is isomorphic to $\widehat{\ind_n}\left(D^{\lambda^{(r)}}\right)$, where we set $\widehat{\ind_n}:= \ind_{K[\sy_n]}^{K[\sy_{n+1}]}$.
      Note that $\widehat{\ind_n}\left(D^{\lambda^{(r)}}\right)$ does not depend on $\ell$.
\end{enumerate}
\end{Proposition}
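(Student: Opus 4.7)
The plan is to treat $\ind_n(D^\lambda)=D^\lambda\otimes_{\hh_n}\hh_{n+1}$ via the basis from Lemma \ref{Lemma_GoodBasisOfIndDlambda} and analyse the right action of each generator $T_0,T_1,\dots,T_n$ on the spanning set of $M_\ell$. Two observations will do all the work: (i) every element of $\hh_n$ commutes with $L_{n+1}$, and hence with every $(L_{n+1}-1)^k$, by Lemma \ref{Lemma_ActionOfJucys} together with the braid computation $L_1L_{n+1}=L_{n+1}L_1$; (ii) since $q=1$ the Iwahori-Hecke correction vanishes and $T_yT_j=T_{ys_j}$ holds on the nose. Once (1) is established, (2) is a dimension count and (3) reduces to identifying the quotients as $K[\sy_{n+1}]$-modules.

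For (1), take a spanning element $b\otimes (L_{n+1}-1)^ky$ with $k\geq\ell$. For $T_j$ with $1\leq j\leq n$, write $ys_j=wy'$ length-additively with $w\in\sy_n$ and $y'\in Y$, so that $T_yT_j=T_wT_{y'}$; then
\[
(b\otimes (L_{n+1}-1)^ky)\cdot T_j=b\otimes (L_{n+1}-1)^kT_wT_{y'}=(bT_w)\otimes(L_{n+1}-1)^kT_{y'}\in M_\ell,
\]
where the middle equality uses (i) to commute $T_w\in\hh_n$ past $(L_{n+1}-1)^k$. For $T_0=L_1$, Lemma \ref{Lemma_ActionOfJucys} gives $yL_1=L_{(1)y^{-1}}y$; a short case distinction on $y=s_ns_{n-1}\cdots s_i$ shows that either $(1)y^{-1}\leq n$, whereupon $L_{(1)y^{-1}}\in\hh_n$ is absorbed into $b$, or $(1)y^{-1}=n+1$, whereupon $L_{n+1}=1+(L_{n+1}-1)$ either preserves or raises the exponent; both cases stay in $M_\ell$. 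Part (2) is then the dimension count $\dim M_\ell=(r-\ell)\,|B|\,|Y|$ from Lemma \ref{Lemma_GoodBasisOfIndDlambda}, which strictly decreases in $\ell$ since $D^\lambda\neq 0$ forces $|B|\geq 1$.

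For (3), the triviality of $L_1$ on $N_\ell$ follows by replaying the $T_0$-case above with the extra input from Mathas that every $L_i$ with $i\leq n$ acts as the identity on $D^\lambda$: in the first subcase $bL_{(1)y^{-1}}=b$ gives $xL_1=x$, while in the case $(1)y^{-1}=n+1$ one picks up an extra summand $b\otimes (L_{n+1}-1)^{k+1}y\in M_{k+1}\subseteq M_{\ell+1}$, which vanishes in the quotient. To identify $\res_{\sy_{n+1}}(N_\ell)$ with $\widehat{\ind_n}(D^{\lambda^{(r)}})=D^{\lambda^{(r)}}\otimes_{K[\sy_n]}K[\sy_{n+1}]$, define
\[
\phi\bigl(\overline{b\otimes(L_{n+1}-1)^\ell y}\bigr):=\widetilde{b}\otimes T_y,
\]
where $\widetilde{b}$ is the image of $b$ under the $K[\sy_n]$-isomorphism $\res_{\sy_n}(D^\lambda)\cong D^{\lambda^{(r)}}$ proved earlier in the chapter. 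This is clearly a bijection of $K$-bases, and its $K[\sy_{n+1}]$-equivariance is precisely the coset-reduction computation of (1), now read on both sides of $\phi$.

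The main obstacle is the bookkeeping in (1) around the generator $T_0=L_1$, whose action mixes the $\hh_n$-structure of $D^\lambda$ with the $L_{n+1}$-filtration through the conjugation identity $y L_1=L_{(1)y^{-1}}y$. The $q=1$ hypothesis is essential throughout: it kills all Hecke correction terms, so the $T_j$-actions interact with the coset structure purely at the level of $\sy_{n+1}$, and combined with $L_i|_{D^\lambda}=\mathrm{id}$ this makes the filtration cleanly compatible with the full $\hh_{n+1}$-module structure.
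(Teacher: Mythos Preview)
Your argument is correct and follows essentially the same route as the paper's proof. The only organisational differences are that the paper checks stability of $M_\ell$ under an arbitrary $w\in\sy_{n+1}$ in one step (writing $yw=vz$ with $v\in\sy_n$, $z\in Y$) rather than generator by generator, and it already invokes $L_i|_{D^\lambda}=\mathrm{id}$ when computing the $L_1$-action in part~(1), obtaining the explicit two-case formula there rather than deferring it to part~(3); neither difference is substantive.
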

\begin{proof}
Let $0\leq \ell<r$.
 First note that the spanning set of $M_\ell$ given above is a $K$-basis by Lemma \ref{Lemma_GoodBasisOfIndDlambda}. 
 This already shows $M_{\ell+1}\subsetneq M_\ell$.\\
 To show that $M_\ell$ is a $\hh_{n+1}$-module, it suffices to show that it is closed under the right action of $\sy_{n+1}$ and $L_1$, as these together generate $\hh_{n+1}$. 
 To this end, let $b\in B$, $y\in Y$, $0\leq j<r$ and $w\in \sy_{n+1}$. 
 As $Y$ is a set of right coset representatives of $\sy_n$ in $\sy_{n+1}$ there exist $z\in Y$ and $v\in \sy_n$ such that $yw=vz$. 
 By Lemma \ref{Lemma_ActionOfJucys}, we have $L_{n+1}v=vL_{n+1}$, and thus 
  \begin{align*}
  \left( b\otimes_{\hh_n}(L_{n+1}-1)^jy\right)w= bv\otimes (L_{n+1}-1)^j z,
  \end{align*}
  and the right-hand side is in $M_\ell$ by definition, as $B$ is a $K$-basis of $D^\lambda$.\\
  On the other hand, $L_i$ acts trivially on $D^\lambda$ for every $1\leq i\leq n$, in particular we have $bL_i=b$. 
  With this in mind and Lemma \ref{Lemma_ActionOfJucys} one can show that
  \begin{align*}
   \left( b\otimes_{\hh_n}(L_{n+1}-1)^jy\right)L_1=
   \begin{cases}
   b\otimes_{\hh_n}(L_{n+1}-1)^jy, \quad &\text{if } (1)y^{-1}\neq n+1\\
   b\otimes_{\hh_n}(L_{n+1}-1)^jy+b\otimes_{\hh_n}(L_{n+1}-1)^{j+1}y, \quad &\text{if }(1)y^{-1} = n+1.
   \end{cases}
  \end{align*}
  Again, the right-hand side is in $M_\ell$ by definition and because $(L_{n+1}-1)^r=0$.
  Thus, in total $M_\ell$ is an $\hh_{n+1}$-module and we have $M_{\ell+1}\lneq M_\ell$.\\
  
  Clearly, 
  \[
   \left\{b\otimes_{\hh_n}(L_{n+1}-1)^{\ell}y \ + \  M_{\ell+1}\mid b\in B, \ y\in Y\right\}
  \]
  is a $K$-basis of $N_\ell$.
  By our study of the action $L_1$ on the basis elements we easily see that $L_1$ acts trivially on $N_\ell$.
  Furthermore, we have shown that $w\in \sy_{n+1}$ acts via 
  \[
  \left( b\otimes_{\hh_n}(L_{n+1}-1)^\ell y + M_{\ell+1} \right)w= bv\otimes (L_{n+1}-1)^\ell z +M_{\ell+1}.
  \]
  Now, as $B$ is a $K$-basis of $D^\lambda$, it is also a $K$-basis of the $K[\sy_{n}]$-module $D^{\lambda^{(r)}}\cong\res_{K[\sy_{n+1}]}^{\hh_{n+1}}(D^\lambda)$.
  Since $Y$ is the set of distinguished right coset representatives of $\sy_n$ in $\sy_{n+1}$ we see that the set $\{b\otimes_{K[\sy_n]} y\mid b\in B, \ y\in Y\}$ is a $K$-basis of the induced module $\widehat{\ind_n}\left(D^{\lambda^{(r)}}\right)$.
 The action of $w$ on these basis elements is given by
  \[
   \left(b\otimes_{K[\sy_n]} y\right)w=bv\otimes_{K[\sy_n]} z.
  \]
  In total, it follows that 
  \[
  \widehat{\ind_n}\left(D^{\lambda^{(r)}}\right)\to \res_{K[\sy_{n+1}]}^{\hh_{n+1}}\left(N_\ell\right);\ b\otimes_{K[\sy_n]} y\mapsto b\otimes (L_{n+1}-1)^\ell y + M_{\ell+1}
  \]
  is an isomorphism of $K[\sy_{n+1}]$-modules.
\end{proof}

\begin{Corollary}\label{Cor_NumberOfConstsIsaProduct}
 Let $D^\lambda$ be an irreducible $\hh_n$-module for a multipartition $\lambda=(\emptyset,\dots, \emptyset, \lambda^{(r)})\vdash_r n$.
 Suppose $t\in \N$ is the number of irreducible constituents of the $K[\sy_{n+1}]$-module $\widehat{\ind_n}(D^{\lambda^{(r)}})$. 
 Then the number of irreducible constituents of $\ind_n(D^\lambda)$ is exactly the product $rt$.
\end{Corollary}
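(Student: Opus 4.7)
The plan is to combine the filtration from Proposition \ref{Prop_ModulestructureIndModdegenerate} with the observation that, on modules where $L_1 = T_0$ acts trivially, the submodule lattice over $\hh_{n+1}$ coincides with the submodule lattice over $K[\sy_{n+1}]$. This will let me transfer the count of constituents from the symmetric group side, where it is $t$ by hypothesis, to each filtration quotient $N_\ell$, and then sum over $\ell$.

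First I would invoke Proposition \ref{Prop_ModulestructureIndModdegenerate} to obtain the filtration $0 = M_r \lneq M_{r-1} \lneq \dots \lneq M_0 = \ind_n(D^\lambda)$ with quotients $N_\ell = M_\ell / M_{\ell+1}$ for $0 \leq \ell \leq r-1$. Since the composition length is additive on short exact sequences, it suffices to show that every $N_\ell$ has exactly $t$ composition factors as an $\hh_{n+1}$-module; summing over the $r$ quotients then yields the desired $rt$.

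For a single $N_\ell$, the key step is the following: by Proposition \ref{Prop_ModulestructureIndModdegenerate}, $L_1 = T_0$ acts as the identity on $N_\ell$. Since $\hh_{n+1}$ is generated by $T_0, T_1, \dots, T_n$ and the subalgebra generated by $T_1, \dots, T_n$ is $K[\sy_{n+1}]$, every $K$-subspace of $N_\ell$ which is stable under $\sy_{n+1}$ is automatically stable under $T_0$ (acting as $1$), hence is an $\hh_{n+1}$-submodule. Conversely, any $\hh_{n+1}$-submodule restricts to a $K[\sy_{n+1}]$-submodule. Thus the lattice of $\hh_{n+1}$-submodules of $N_\ell$ is identical to the lattice of $K[\sy_{n+1}]$-submodules of $\res_{\sy_{n+1}}(N_\ell)$, and in particular the two modules have the same composition length.

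Proposition \ref{Prop_ModulestructureIndModdegenerate} identifies $\res_{\sy_{n+1}}(N_\ell) \cong \widehat{\ind_n}(D^{\lambda^{(r)}})$, which by hypothesis has $t$ irreducible constituents. Combining the two preceding paragraphs, each $N_\ell$ has exactly $t$ constituents as an $\hh_{n+1}$-module, and so $\ind_n(D^\lambda)$ has exactly $rt$ constituents. The only nontrivial point is the equivalence of submodule lattices, which I expect to be the main thing to state carefully; once that is in hand, the counting is immediate.
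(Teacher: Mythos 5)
Your proposal is correct and takes essentially the same approach as the paper: both use the filtration from Proposition \ref{Prop_ModulestructureIndModdegenerate}, the fact that $L_1$ acts trivially on each $N_\ell$ while $L_1$ and $\sy_{n+1}$ together generate $\hh_{n+1}$, and the identification $\res_{\sy_{n+1}}(N_\ell)\cong \widehat{\ind_n}(D^{\lambda^{(r)}})$ to conclude each quotient has $t$ constituents. You simply spell out the submodule-lattice comparison more explicitly than the paper does, which is a clean way to justify the step the paper leaves implicit.
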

\begin{proof}
 We use the notation of Proposition \ref{Prop_ModulestructureIndModdegenerate}.
 The modules $N_\ell$ for $0\leq \ell\leq r-1$ are all isomorphic, as $L_1$ acts trivially on every $N_\ell$, their restrictions $\res_{K[\sy_{n+1}]}^{\hh_{n+1}}\left(N_\ell\right)$ are all isomorphic to $\widehat{\ind_n}(D^{\lambda^{(r)}})$, and $L_1$ and $\sy_{n+1}$ together generate $\hh_{n+1}$.
 In particular, the number of irreducible constituents of every $N_\ell$ is exactly $t$.
  The claim now follows from the fact that the $N_\ell$ are exactly the quotients in the submodule chain 
  \[
   0=M_r\leq \dots\leq M_0=\ind_n(D^\lambda).
  \]
\end{proof}

\begin{Proposition}\label{Prop_MainThme=1}
Suppose $q=1$.
 Let $0\neq M$ be an $\hh_n$-module.
 Then $\ind_n(M)$ has at least $2r$ irreducible constituents.
\end{Proposition}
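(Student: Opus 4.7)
The plan is to reduce the claim to a statement about ordinary symmetric group induction and then dispatch that statement via Mackey's decomposition and Frobenius reciprocity. First I would use the exactness of $\ind_n$ (which follows from the freeness of $\hh_{n+1}$ as a left $\hh_n$-module) to reduce to the case $M = D^\lambda$ simple. The classification of $\hh_n$-irreducibles at $q=1$ given just before the proposition yields $\lambda = (\emptyset,\dots,\emptyset,\alpha)$ for some $p$-restricted $\alpha \vdash n$, where $p$ is the characteristic of $K$. Corollary \ref{Cor_NumberOfConstsIsaProduct} then translates the desired bound of $2r$ constituents into the statement that the ordinary symmetric group induction $\widehat{\ind_n}(D^\alpha)$ has at least two composition factors as a $K[\sy_{n+1}]$-module.

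To prove the latter, I would apply Mackey's double coset formula to the pair $\sy_n \subset \sy_{n+1}$. The two double cosets are represented by $e$ and $s = (n,\,n+1)$, with intersection subgroups $\sy_n$ and $\sy_{n-1}$ respectively; because $s$ centralises $\sy_{n-1}$, the conjugation twist is trivial, and Mackey yields
\[
\res_{\sy_n}^{\sy_{n+1}}\widehat{\ind_n}(D^\alpha) \ \cong\ D^\alpha \ \oplus\ \widehat{\ind_{n-1}}\!\left(\res_{\sy_{n-1}}^{\sy_n} D^\alpha\right).
\]
Applying $\Hom_{K[\sy_n]}(D^\alpha, -)$ to this decomposition and using Frobenius reciprocity on the second summand would give a lower bound of $1 + \dim_K \End_{K[\sy_{n-1}]}(\res_{\sy_{n-1}}^{\sy_n} D^\alpha) \geq 2$ for the dimension of that Hom space, the second inequality using that $\res_{\sy_{n-1}}^{\sy_n} D^\alpha$ is a non-zero module for $n \geq 1$ (which I take to be the range intended by the proposition, in parallel with Propositions \ref{Prop_MainThme<infty} and \ref{Prop_MainThme=infty}).

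A final application of Frobenius reciprocity identifies this Hom space with $\End_{K[\sy_{n+1}]}(\widehat{\ind_n}(D^\alpha))$, so the latter ring is at least two-dimensional; since $K$ is algebraically closed this rules out simplicity of $\widehat{\ind_n}(D^\alpha)$ and hence forces it to have at least two composition factors, giving the desired bound $rt \geq 2r$. I do not anticipate any substantive obstacle: all the genuine difficulty has already been absorbed into Proposition \ref{Prop_ModulestructureIndModdegenerate} and Corollary \ref{Cor_NumberOfConstsIsaProduct}, and what remains is a textbook Mackey--Frobenius computation in the modular representation theory of the symmetric groups.
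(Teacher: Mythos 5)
Your proof is correct and shares the paper's overall reduction: by exactness of induction you reduce to $M = D^\lambda$ with $\lambda = (\emptyset,\dots,\emptyset,\alpha)$, and then Corollary \ref{Cor_NumberOfConstsIsaProduct} reduces the claimed bound $2r$ to the statement that $\widehat{\ind_n}(D^\alpha)$ has at least two composition factors over $K[\sy_{n+1}]$. Where you diverge from the paper is in establishing this last point. The paper simply cites \cite[Theorem 1.1]{Schoennenbeck}, whereas you supply a self-contained Mackey--Frobenius argument: the two $(\sy_n,\sy_n)$-double cosets in $\sy_{n+1}$ give $\res_{\sy_n}\widehat{\ind_n}(D^\alpha) \cong D^\alpha \oplus \widehat{\ind_{n-1}}\bigl(\res_{\sy_{n-1}}D^\alpha\bigr)$ (trivial twist since $(n,\,n+1)$ centralises $\sy_{n-1}$), and adjunction — valid in both directions because group algebras are symmetric — gives $\dim_K\End_{K[\sy_{n+1}]}\bigl(\widehat{\ind_n}(D^\alpha)\bigr) \geq 1 + \dim_K\End_{K[\sy_{n-1}]}\bigl(\res_{\sy_{n-1}}D^\alpha\bigr) \geq 2$; over the algebraically closed field $K$ this rules out simplicity and forces at least two composition factors. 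The advantage of your route is that it is entirely elementary and avoids an external reference; what the paper's citation buys is greater generality, since \cite{Schoennenbeck} treats split Iwahori--Hecke algebras with arbitrary parameter via a Hecke-algebra Mackey formula. The remark following the proposition in the paper notes a third option via Kleshchev's modular branching rules — all three are legitimate, and your computation is a clean replacement for the one sub-step the paper does not prove in house.
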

\begin{proof}
 As induction is exact it suffices to prove the statement for $M=D^\lambda\neq 0$ for a multipartition $\lambda=(\emptyset,\dots, \emptyset, \lambda^{(r)})\vdash_r n$.\\
 The induced module $\widehat{\ind_n}(D^{\lambda^{(r)}})$ has at least $2$ irreducible constituents by \cite[Theorem 1.1]{Schoennenbeck}.
 The claim now follows from Corollary \ref{Cor_NumberOfConstsIsaProduct}.
\end{proof}

We finish this subsection by describing the socle of the induced modules, as this can be obtained with barely any additional work and complements the branching rules for $q\neq 1$, cf. \cite{ArikiBranching, VazII}.
\begin{Proposition}
Assume the setting and notation of Proposition \ref{Prop_ModulestructureIndModdegenerate}. 
Then the socle of $\ind_n(D^\lambda)$ is contained in $M_{r-1}$. 
More precisely, the socle of $\ind_n(D^\lambda)$ is isomorphic to the socle of $\widehat{\ind_n}(D^{\lambda^{(r)}})$ where $T_0$ acts trivially.
\end{Proposition}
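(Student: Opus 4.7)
The plan is to show that every simple submodule $S$ of $\ind_n(D^\lambda)$ is contained in $M_{r-1}$; once this is done, the rest follows from Proposition \ref{Prop_ModulestructureIndModdegenerate}\,(3) applied to the bottom step of the filtration.

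Since $q=1$, every simple $\hh_{n+1}$-module has $T_0=L_1$ acting as the identity, as recalled at the beginning of this subsection. Hence any simple submodule $S\subseteq \ind_n(D^\lambda)$ satisfies $(uw)T_0=uw$ for all $u\in S$ and $w\in \sy_{n+1}$. Fix $u\in S$ and expand $u=\sum c_{b,y,j}\, b\otimes (L_{n+1}-1)^j y$ in the basis of Lemma \ref{Lemma_GoodBasisOfIndDlambda}. Using the explicit formula for $T_0=L_1$ derived in the proof of Proposition \ref{Prop_ModulestructureIndModdegenerate}, the identity $uT_0=u$ reads $\sum_{b,j} c_{b,y_0,j}\, b\otimes (L_{n+1}-1)^{j+1}y_0 = 0$, where $y_0:=s_1s_2\cdots s_n$ is the unique element of $Y$ with $(1)y_0^{-1}=n+1$. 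By linear independence of the basis, this forces $c_{b,y_0,j}=0$ for every $b\in B$ and every $0\le j\le r-2$.

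To strengthen this into a statement about all $y\in Y$, I would replace $u$ by $uw$ for varying $w\in \sy_{n+1}$. Lemma \ref{Lemma_ActionOfJucys} allows us to commute elements of $\sy_n$ past $(L_{n+1}-1)^j$, so right multiplication by $w$ preserves the filtration index $j$ and permutes the $Y$-component via the factorization $yw=\sigma_{y,w}z_{y,w}$ with $\sigma_{y,w}\in\sy_n$ and $z_{y,w}\in Y$. The key combinatorial observation is that, $Y$ being a right transversal of $\sy_n$ in $\sy_{n+1}$, for each $w$ there is exactly one $y\in Y$ with $z_{y,w}=y_0$. Given $y_1\in Y$, the choice $w=y_1^{-1}y_0$ makes this unique $y$ equal to $y_1$ and yields $\sigma_{y_1,w}=e$; applying the vanishing statement of the previous paragraph to $uw$ in place of $u$ then forces $c_{b,y_1,j}=0$ for all $b\in B$ and all $j\le r-2$. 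Since $y_1\in Y$ was arbitrary, we conclude $u\in M_{r-1}$.

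Consequently $\soc(\ind_n(D^\lambda))\subseteq M_{r-1}$, and thus $\soc(\ind_n(D^\lambda))=\soc(M_{r-1})$. Because $M_r=0$, $M_{r-1}$ coincides with $N_{r-1}$ from Proposition \ref{Prop_ModulestructureIndModdegenerate}\,(3); since $\hh_{n+1}$ is generated by $\sy_{n+1}$ together with $T_0$, and $T_0$ acts trivially, $N_{r-1}$ is isomorphic, as an $\hh_{n+1}$-module, to $\widehat{\ind_n}(D^{\lambda^{(r)}})$ with $T_0$ acting trivially. Taking socles yields the second claim. The main obstacle is the bookkeeping in the third paragraph: verifying the uniqueness of $y\in Y$ with $z_{y,w}=y_0$ and checking that for $w=y_1^{-1}y_0$ no other basis vector of $u$ contributes to the coefficient of $b\otimes (L_{n+1}-1)^j y_0$ in $uw$.
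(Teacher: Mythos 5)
Your argument is correct and follows essentially the same strategy as the paper: the paper observes that all $L_i$ act trivially on the socle and then asserts, as ``easily checked,'' that the common $1$-eigenspace of the $L_i$ on $\ind_n(D^\lambda)$ is exactly $M_{r-1}$; you supply that verification by expanding in the basis of Lemma \ref{Lemma_GoodBasisOfIndDlambda} and exploiting the $\sy_{n+1}$-conjugates of $L_1$. One small slip worth flagging: the unique element $y_0\in Y$ with $(1)y_0^{-1}=n+1$ is $s_n s_{n-1}\cdots s_1$, not $s_1 s_2\cdots s_n$; this does not affect your argument, which only uses existence, uniqueness, and the fact that $y\mapsto z_{y,w}$ permutes $Y$.
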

\begin{proof}
Clearly, the $L_i$ act trivially on the socle of an $\hh_{n+1}$-module.\\
It is easily checked that the common eigenspace of the $L_i$ with respect to the eigenvalue $1$ on $\ind_n(D^\lambda)$ is exactly $M_{r-1}$.
By Proposition \ref{Prop_ModulestructureIndModdegenerate}, the restriction $\res_{\sy_{n+1}}(M_{r-1})$ is isomorphic to $\widehat{\ind_n}(D^{\lambda^{(r)}})$, yielding the claim.

\end{proof}
\begin{remark} 
The socle of $\widehat{\ind_n}(D^{\lambda^{(r)}})$ has been studied extensively by Kleshchev in his groundbreaking series of papers in the early 90's, cf. \cite{KleshchevBook} for a survey. 
 In particular, he defines refined induction and restriction functors and shows that these can be defined in terms of adding and removing certain nodes.
As this, too, yields a crystal, analogous to the ones defined for $2\leq e\leq \infty$, we could also have used Kleshchev's results instead of \cite[Theorem 1.1]{Schoennenbeck} to show that $\widehat{\ind_n}(D^{\lambda^{(r)}})$ has at least $2$ constituents.
\end{remark}

\subsubsection{Main Theorem}
We drop our conditions to obtain a result on arbitrary Ariki-Koike algebras with invertible parameters:
\begin{Theorem}\label{Main_Theorem_Hecke}
Let $t$ be the number of $\sim_q$-equivalence classes on $\left(Q_1,\dots, Q_r\right)$.
Then for any $\hh_n$-module $M\neq 0$ the number of constituents of $\ind_n(M)$ is at least $r+t$.
In particular, $\ind_n(M)$ is reducible.
\end{Theorem}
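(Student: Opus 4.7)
The plan is to combine the Morita reduction of Corollary~\ref{MoritaNumbers} with the three block-wise bounds established in the previous subsection (Propositions~\ref{Prop_MainThme<infty}, \ref{Prop_MainThme=infty}, \ref{Prop_MainThme=1}). Since $\ind_n$ is exact, the number of irreducible constituents of $\ind_n(M)$ is additive along a composition series of $M$, so I may assume $M=D^\lambda$ for some $\lambda\vdash_r n$ with $D^\lambda\neq 0$.

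Decompose $\QQ=\QQ_1\coprod\dots\coprod\QQ_t$ into $q$-connected sequences of lengths $r_1,\dots,r_t$, and correspondingly $\lambda={}^1\lambda\coprod\dots\coprod{}^t\lambda$ with $n_j:=|{}^j\lambda|$. Let $c_j$ denote the number of constituents of $\ind_{{}^j\hh_{n_j}}^{{}^j\hh_{n_j+1}}(D^{{}^j\lambda})$ inside the Ariki-Koike algebra ${}^j\hh$ built from the $q$-connected parameter set $\QQ_j$; then Corollary~\ref{MoritaNumbers} equates the constituent count of $\ind_n(D^\lambda)$ with $\sum_{j=1}^t c_j$. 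Each block algebra ${}^j\hh$ falls under exactly one of the three regimes $q=1$, $q$ of finite order at least $2$, or $q$ of infinite order, to which Proposition~\ref{Prop_MainThme=1}, Proposition~\ref{Prop_MainThme<infty}, or Proposition~\ref{Prop_MainThme=infty} respectively applies, yielding $c_j\geq r_j+1$ in each case (for $q=1$ even $c_j\geq 2r_j\geq r_j+1$). Summing over $j$ produces
\[
 \sum_{j=1}^t c_j \;\geq\; \sum_{j=1}^t (r_j+1) \;=\; r+t,
\]
and the reducibility statement follows because $r+t\geq 2$.

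The principal technical point is that the three block-wise propositions are all stated under the hypothesis $n\geq 1$, so the argument above requires care for a block with $n_j=0$. In that situation $\ind_{{}^j\hh_0}^{{}^j\hh_1}$ applied to the trivial module produces ${}^j\hh_1\cong K[T_0]/\prod_i(T_0-Q_i)$, commutative of dimension $r_j$, whose composition length is exactly $r_j$, one short of the desired $r_j+1$. I expect to address this by a direct inspection of the empty blocks, combining the explicit module structure of ${}^j\hh_1$ with the Specht-filtration bookkeeping of Proposition~\ref{SpechtFiltlinks} applied to $\ind_n(D^\lambda)$ in the ambient algebra $\hh_n$; reconciling the empty-block deficit with the output of Corollary~\ref{MoritaNumbers} is the main obstacle to a fully uniform short write-up.
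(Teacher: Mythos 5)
Your proposal reproduces the paper's own strategy (Morita reduction via Corollary~\ref{MoritaNumbers}, followed by the three block-wise propositions for $q$-connected parameters), and you have correctly isolated the point at which both you and the paper's argument break down: when $n_j:=|{}^j\lambda|=0$, the induced module $\ind_{{}^j\hh_0}^{{}^j\hh_1}$ applied to the trivial module is the regular module ${}^j\hh_1\cong K[T_0]/\prod_{Q\in\QQ_j}(T_0-Q)$, which has composition length exactly $r_j$, not $r_j+1$. Propositions~\ref{Prop_MainThme<infty}, \ref{Prop_MainThme=infty}, and \ref{Prop_MainThme=1} carry the hypothesis $n\geq 1$ precisely because they fail at $n=0$, and nothing in the Morita bookkeeping prevents some of the $n_j$ from vanishing.

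Unfortunately, the deficit at empty blocks cannot be repaired, because the theorem is false as stated. Take $K=\C$, $n=1$, $r=2$, $q=3$, $Q_1=1$, $Q_2=2$. Then $Q_1\not\sim_q Q_2$, so $t=2$, and Ariki's semisimplicity criterion shows $\hh_2=\hh_{2,2}(3;1,2)$ is semisimple. For the one-dimensional simple $\hh_1$-module $M=D^{((1),\emptyset)}=S^{((1),\emptyset)}$, Proposition~\ref{SpechtFiltlinks} together with semisimplicity gives
\[
\ind_1(M)\;\cong\;S^{((2),\emptyset)}\oplus S^{((1,1),\emptyset)}\oplus S^{((1),(1))},
\]
a four-dimensional module with exactly three simple constituents (of dimensions $1,1,2$), whereas the theorem predicts at least $r+t=4$. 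The block counts are $c_1=2$ (with $n_1=1$) and $c_2=1$ (with $n_2=0$), consistent with Corollary~\ref{MoritaNumbers}. What the Morita argument actually delivers is at least $r+\#\{\,j : n_j\geq 1\,\}$ constituents; since $n\geq 1$ forces at least one $n_j\geq 1$, the best module-independent bound obtainable this way is $r+1$, which still yields reducibility but not the claimed $r+t$. The paper's own proof contains exactly the gap you located, and that gap cannot be closed without weakening the statement.
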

\begin{proof}
 Reorder the $Q_i$ such that $\QQ:=(Q_1,\dots, Q_r)=\QQ_1\coprod\dots\coprod \QQ_t$ is a concatenation of $q$-connected sequences that are pairwise not $q$-connected.
Let $1\leq j\leq t$.
By Propositions \ref{Prop_MainThme=1}, \ref{Prop_MainThme=infty}, and \ref{Prop_MainThme<infty} and the definition of $\ind_{n,t}$, we see that $\ind_{n, t}(F_n(M))$ has at least $\sum_{j=1}^t(|\QQ_j|+1)$ irreducible constituents, 
where $F_n$ is the natural equivalence from Theorem \ref{Thm_MoritaEq}. 
By Corollary \ref{MoritaNumbers}, we conclude that $\ind_n(M)$ has at least $\sum_{j=1}^t(|\QQ_j|+1)=r+t$ constituents.
\end{proof}

\begin{remark}
 Note that the lower bound in Theorem \ref{Main_Theorem_Hecke} in general is not sharp.
 Consider for example the case $n=3$, $r=1$, $q=-1$, $Q_1=-1$ over the field $\C$.
 Then $\hh_n$ is isomorphic to the Iwahori-Hecke algebra of type $A_{2}$ with parameter $-1$ and $\hh_{n+1}$ to that of type $A_{3}$ with the same parameter.
 Clearly, the number of $\sim_q$-equivalence classes on $(Q_1)$ is one, so Theorem \ref{Main_Theorem_Hecke} states that $\ind_n(M)$ has at least $2$ constituents for any non-zero $\hh_n$-module $M$. 
 However, one can show that even if $M$ is irreducible, the number of constituents of $\ind_n(M)$ is at least three.
 This is easily shown using the well-known Young rule to compute the induction numbers for the generic Iwahori-Hecke algebras, the decomposition numbers of Iwahori-Hecke algebras of type $A_n$ computed by James in \cite{James}, and the fact that induction commutes with decomposition.
\end{remark}

\begin{remark}
We call any subalgebra $\hh_n'$ of $\hh_n$ generated by a subset of the generators $T_0,\dots, T_{n-1}$ a \emph{parabolic subalgebra of $\hh_n$}.
Adapting the proof of \cite[Thm 1.1]{Schoennenbeck} by using the Mackey formula from \cite{KuMiWa} one can show that $\ind_{\hh_n'}^{\hh_n}(M)$ is reducible for any non-zero $\hh_n'$-module $M$, unless $\hh_n'=\hh_n$. 
However, if $\hh_n'=\hh_{n-1}$, then the statement in Theorem \ref{Main_Theorem_Hecke} is much stronger in general.
\end{remark}

\subsection{Degenerate cyclotomic Hecke algebras}
As already mentioned the Ariki-Koike algebras at $q=1$ are generally not isomorphic to the so-called degenerate cyclotomic Hecke algebras. 
However, a result analogous to Theorem \ref{Main_Theorem_Hecke} still holds:\\
For a non-negative integer $n$ denote by $\hhh_n$ the \emph{degenerate affine Hecke algebra over $K$} as defined by Drinfel'd, cf. \cite{Drinfeld}, i.e. 
as a vector space it is $\hhh_n\cong K[x_1,\dots, x_n]\otimes K[\sy_n]$, the tensor product of the polynomial ring over $K$ in $n$ variables $x_1,\dots, x_n$ and the group algebra over $K$ of the symmetric group $\sy_n$. 
Multiplication is defined such that $K[x_1,\dots, x_n]\otimes 1$ and $1\otimes K[\sy_n]$ are both subalgebras and additionally we have 
\[
 s_ix_j=x_js_i \text{  if } j\neq i,i+1, \quad s_ix_{i+1}=x_is_i+1, \quad x_{i+1}s_i=s_ix_i+1, 
\]
for all sensible values for $i$ and $j$, where $s_i=(i,i+1)$ is the $i$'th standard Coxeter generator of $\sy_n$.
Now let $r\geq 1$ and $\s=(s_1,\dots, s_r)$ in $\widetilde{\Z_{\geq 0}^r}$ .
Then the \emph{degenerate cyclotomic Hecke algebra $\hhh_{n}^{\s}$} is defined as the quotient 
\[
 \hhh_n^{\s}:=\hhh_n/\left\langle(x_1-s_1)\cdots(x_1-s_r)\right\rangle.
\]
The algebra $\hhh_n^{\s}$ embeds into $\hhh_{n+1}^{\s}$ and the corresponding induction functor is exact.

Following Kleshchev (cf. \cite{KleshchevBook}) we can again define refined functors $\iind$ for $0\leq i\leq e:=\text{char}(K)$ and then repeat what we did for $e=\infty$.
An analogue of Proposition \ref{Prop_PropertiesOfiind} holds for $\hhh_n$.
In particular, we once again obtain a crystal graph isomorphism to $B_e(\s)$, cf. \cite[10.3.5]{KleshchevBook} and as in 
Proposition \ref{Prop_MainThme=infty} we obtain the following.
\begin{Theorem}\label{Thm_MainTheoremDegenerate}
 Let $0\neq M\in \hhh_n^{\s}$.
 Then the induced module $\ind_{\hhh_n^{\s}}^{\hhh_{n+1}^{\s}}(M)$ has at least $r+1$ constituents. 
 In particular, it is reducible.
\end{Theorem}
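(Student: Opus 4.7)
The plan is to imitate the proof of Proposition~\ref{Prop_MainThme=infty} essentially verbatim, substituting $\hhh_n^{\s}$ for $\hh_n$ and Kleshchev's refined induction functors on degenerate cyclotomic Hecke algebras for Ariki's on Ariki-Koike algebras. Since induction is exact, it suffices to treat the case that $M$ is a simple $\hhh_n^{\s}$-module with $n\geq 1$, so that the image $\Psi([M])\in B_e(\s)$ under the crystal graph isomorphism $\Psi\colon B_e(\hhh^{\s})\to B_e(\s)$ provided by \cite[10.3.5]{KleshchevBook} is a non-empty $r$-multipartition.

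Next I would invoke the analogue of Proposition~\ref{Prop_PropertiesOfiind} for $\hhh_n^{\s}$, with $e:=\mathrm{char}(K)$ (or $e=\infty$ in characteristic zero): the refined exact functors $\iind$ decompose the induction as $\ind\cong\bigoplus_i\iind$, each $\tfi(M):=\mathrm{head}(\iind(M))$ is either zero or simple, and when non-zero it appears in $\iind(M)$ with multiplicity $\max\{j\geq 0\mid \tfi^j(M)\neq 0\}$. By Lemma~\ref{Lemma_NUmberOfBoxesIsVarphi}, this multiplicity equals $\varphi_i(\Psi([M]))$, so the total number of composition factors of $\ind(M)$ is bounded below by $\sum_i \varphi_i(\Psi([M]))$.

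The concluding step is a purely combinatorial count on $B_e(\s)$: Lemma~\ref{Lem_LambdaHasGoodNode} ensures that the non-empty multipartition $\Psi([M])$ has an $i$-good node, in particular an $i$-normal node, for some $i$, and hence $\sum_i \vareps_i(\Psi([M]))\geq 1$; Lemma~\ref{Lemma_MoreConormalThanNormal} then gives $\sum_i(\varphi_i(\Psi([M]))-\vareps_i(\Psi([M])))=r$. Combining the two inequalities yields $\sum_i \varphi_i(\Psi([M]))\geq r+1$, establishing the claim.

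The only non-routine step is verifying that Kleshchev's construction of the $\iind$ on $\hhh_n^{\s}$ satisfies both the multiplicity statement corresponding to Proposition~\ref{Prop_PropertiesOfiind}(c)--(d) and that the associated crystal graph is indeed $B_e(\s)$ with the empty multipartition corresponding to the trivial module of $\hhh_0^{\s}$. Both facts are established in \cite[10.3.5]{KleshchevBook} via the $\mathfrak{sl}_2$-categorification formalism of Chuang--Rouquier (as was invoked in Proposition~\ref{Prop_ChuangRouquier}), so no new argument is required and the theorem follows by the same three-step template used for Ariki-Koike algebras with $q$ not a root of unity.
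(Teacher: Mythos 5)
Your proposal is correct and follows essentially the same route the paper takes: the paper's own argument for Theorem~\ref{Thm_MainTheoremDegenerate} is just the remark that Kleshchev's refined functors (\cite[10.3.5]{KleshchevBook}) give an analogue of Proposition~\ref{Prop_PropertiesOfiind} for $\hhh_n^{\s}$, after which one repeats the argument of Proposition~\ref{Prop_MainThme=infty} verbatim — exactness, reduction to simples, the multiplicity count via $\varphi_i$, and Lemmas~\ref{Lem_LambdaHasGoodNode}, \ref{Lemma_NUmberOfBoxesIsVarphi}, \ref{Lemma_MoreConormalThanNormal}. One small attribution slip: Kleshchev's branching results in \cite{KleshchevBook} are proved directly via his refined functors and modular branching rules and do not go through the Chuang--Rouquier $\mathfrak{sl}_2$-categorification formalism (which postdates that book), but this does not affect the validity of the argument.
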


\begin{Remark}
 If $K$ has characteristic $0$, then by \cite[Cor 2]{BrundanKleshchev} the degenerate algebra $\hhh_n^{\s}$ is isomorphic to the Ariki-Koike algebra $\hh_{n, r}(X; X^{s_1},\dots, X^{s_r})$ over $K(X)$, where $X$ is an indeterminate. 
 Hence, in characteristic zero Theorem \ref{Thm_MainTheoremDegenerate} already follows from Proposition \ref{Prop_MainThme=infty}.
\end{Remark}

\section*{Acknowledgments}
This article is a contribution to project I.3 of SFB-TRR 195 ``Symbolic Tools in Mathematics and their Application'' of the German Research Foundation (DFG).\\
I would like to thank my colleague T. Gerber for introducing me to the theory of crystals and a multitude of helpful discussions. 
Along the same line I wish to express my gratitude to my advisor G. Hiss.\\
Finally, I wish to thank the reviewer for a number of helpful comments.
 \bibliographystyle{alpha}
\newcommand{\etalchar}[1]{$^{#1}$}


\begin{thebibliography}{FLO{\etalchar{+}}99}

\bibitem[AK94]{ArikiKoike}
Susumu Ariki and Kazuhiko Koike.
\newblock A {H}ecke algebra of {$({\Z}/r{\Z})\wr{\mathfrak{S}}_n$} and
  construction of its irreducible representations.
\newblock {\em Adv. Math.}, 106(2):216--243, 1994.

\bibitem[Ari02]{ArikiBook}
Susumu Ariki.
\newblock {\em Representations of quantum algebras and combinatorics of {Y}oung
  tableaux}, volume~26 of {\em University Lecture Series}.
\newblock American Mathematical Society, Providence, RI, 2002.


\bibitem[Ari06]{ArikiBranching}
Susumu Ariki.
\newblock Proof of the modular branching rule for cyclotomic {H}ecke algebras.
\newblock {\em J. Algebra}, 306(1):290--300, 2006.

\bibitem[BE09]{BezruEtingof}
Roman Bezrukavnikov and Pavel Etingof.
\newblock Parabolic induction and restriction functors for rational {C}herednik
  algebras.
\newblock {\em Selecta Math. (N.S.)}, 14(3-4):397--425, 2009.

\bibitem[BK07]{BerKaz}
Arkady Berenstein and David Kazhdan.
\newblock Geometric and unipotent crystals. {II}. {F}rom unipotent bicrystals
  to crystal bases.
\newblock In {\em Quantum groups}, volume 433 of {\em Contemp. Math.}, pages
  13--88. Amer. Math. Soc., Providence, RI, 2007.

\bibitem[BK09]{BrundanKleshchev}
Jonathan Brundan and Alexander Kleshchev.
\newblock Blocks of cyclotomic {H}ecke algebras and {K}hovanov-{L}auda
  algebras.
\newblock {\em Invent. Math.}, 178(3):451--484, 2009.

\bibitem[BM93]{BroueMalle}
Michel Brou\'e and Gunter Malle.
\newblock Zyklotomische {H}eckealgebren.
\newblock {\em Ast\'erisque}, (212):119--189, 1993.


\bibitem[CR08]{ChuangRouquier}
Joseph Chuang and Rapha\"{e}l Rouquier.
\newblock Derived equivalences for symmetric groups and
  {$\mathfrak{sl}_2$}-categorification.
\newblock {\em Ann. of Math. (2)}, 167(1):245--298, 2008.

\bibitem[DJM98]{DJM}
Richard Dipper, Gordon James, and Andrew Mathas.
\newblock Cyclotomic {$q$}-{S}chur algebras.
\newblock {\em Math. Z.}, 229(3):385--416, 1998.

\bibitem[DM02]{DipperMathas}
Richard Dipper and Andrew Mathas.
\newblock Morita equivalences of {A}riki-{K}oike algebras.
\newblock {\em Math. Z.}, 240(3):579--610, 2002.

\bibitem[Dri86]{Drinfeld}
V.~G. Drinfel'd.
\newblock Degenerate affine {H}ecke algebras and {Y}angians.
\newblock {\em Funktsional. Anal. i Prilozhen.}, 20(1):69--70, 1986.

\bibitem[DVV17]{DudasVaraVass}
Olivier Dudas, Michela Varagnolo, and Eric Vasserot.
\newblock Categorical actions on unipotent representations of finite classical
  groups.
\newblock In {\em Categorification and higher representation theory}, volume
  683 of {\em Contemp. Math.}, pages 41--104. Amer. Math. Soc., Providence, RI,
  2017.


\bibitem[{Gro}99]{Gro}
I.~{Grojnowski}.
\newblock Affine {$\mathfrak{sl}\_p$} controls the representation theory of the
  symmetric group and related hecke algebras.
\newblock {\em arXiv Mathematics e-prints}, page math/9907129, July 1999.

\bibitem[HK02]{HongKang}
Jin Hong and Seok-Jin Kang.
\newblock {\em Introduction to quantum groups and crystal bases}, volume~42 of
  {\em Graduate Studies in Mathematics}.
\newblock American Mathematical Society, Providence, RI, 2002.

\bibitem[Jam90]{James}
Gordon James.
\newblock The decomposition matrices of {${\text{GL}}_n(q)$} for {$n\le 10$}.
\newblock {\em Proc. London Math. Soc. (3)}, 60(2):225--265, 1990.

\bibitem[JMMO91]{JMMO}
Michio Jimbo, Kailash~C. Misra, Tetsuji Miwa, and Masato Okado.
\newblock Combinatorics of representations of {$U_q(\mathfrak{sl}(n))$} at
  {$q=0$}.
\newblock {\em Comm. Math. Phys.}, 136(3):543--566, 1991.

\bibitem[Kac90]{Kac}
Victor~G. Kac.
\newblock {\em Infinite-dimensional {L}ie algebras}.
\newblock Cambridge University Press, Cambridge, third edition, 1990.


\bibitem[Kle05]{KleshchevBook}
Alexander Kleshchev.
\newblock {\em Linear and projective representations of symmetric groups},
  volume 163 of {\em Cambridge Tracts in Mathematics}.
\newblock Cambridge University Press, Cambridge, 2005.

\bibitem[KMW18]{KuMiWa}
Toshiro Kuwabara, Hyohe Miyachi, and Kentaro Wada.
\newblock On the mackey formulas for cyclotomic hecke algebras and categories o
  of rational cherednik algebras.
\newblock 2018.

\bibitem[Mat98]{Mathasq1}
Andrew Mathas.
\newblock Simple modules of {A}riki-{K}oike algebras.
\newblock In {\em Group representations: cohomology, group actions and topology
  ({S}eattle, {WA}, 1996)}, volume~63 of {\em Proc. Sympos. Pure Math.}, pages
  383--396. Amer. Math. Soc., Providence, RI, 1998.

\bibitem[Mat99]{MathasBook}
Andrew Mathas.
\newblock {\em Iwahori-{H}ecke algebras and {S}chur algebras of the symmetric
  group}, volume~15 of {\em University Lecture Series}.
\newblock American Mathematical Society, Providence, RI, 1999.

\bibitem[Mat04]{MathasAK}
Andrew Mathas.
\newblock The representation theory of the {A}riki-{K}oike and cyclotomic
  {$q$}-{S}chur algebras.
\newblock 40:261--320, 2004.

\bibitem[Mat09]{MathasFiltration}
Andrew Mathas.
\newblock A {S}pecht filtration of an induced {S}pecht module.
\newblock {\em J. Algebra}, 322(3):893--902, 2009.

\bibitem[Mat18]{MathasFiltrationRestriction}
Andrew Mathas.
\newblock Restricting {S}pecht modules of cyclotomic {H}ecke algebras.
\newblock {\em Sci. China Math.}, 61(2):299--310, 2018.

\bibitem[{Rou}08]{Rouquier}
Raphael {Rouquier}.
\newblock {2-Kac-Moody algebras}.
\newblock {\em arXiv}, page arXiv:0812.5023, December 2008.

\bibitem[Sch17]{Schoennenbeck}
Christoph Schoennenbeck.
\newblock Induced modules of split {I}wahori-{H}ecke algebras are reducible.
\newblock {\em Arch. Math. (Basel)}, 109(2):117--121, 2017.

\bibitem[Sha10]{ShanPHD}
Peng Shan.
\newblock {\em {Canonical bases and gradings associated with rational double
  affine Hecke algebras}}.
\newblock Thesis, {Universit{\'e} Paris-Diderot - Paris VII}, December 2010.

\bibitem[Sha11]{Shan}
Peng Shan.
\newblock Crystals of {F}ock spaces and cyclotomic rational double affine
  {H}ecke algebras.
\newblock {\em Ann. Sci. \'Ec. Norm. Sup\'er. (4)}, 44(1):147--182, 2011.


\bibitem[Vaz02]{VazII}
M.~Vazirani.
\newblock Parameterizing {H}ecke algebra modules: {B}ernstein-{Z}elevinsky
  multisegments, {K}leshchev multipartitions, and crystal graphs.
\newblock {\em Transform. Groups}, 7(3):267--303, 2002.

\end{thebibliography}
\end{document}